\definecolor{mygreen}{rgb}{0.1,0.75,0.2}
 \newtheorem{thm}{Theorem}[section]
 \newtheorem{cor}[thm]{Corollary}
 \newtheorem{lem}[thm]{Lemma}
 \newtheorem{prop}[thm]{Proposition}
  \newtheorem*{thm*}{Main Theorem} 
 \newtheorem{defn}[thm]{Definition}
 \newtheorem{asmp}[thm]{Assumption}
 \newtheorem{rem}[thm]{Remark}
 \numberwithin{equation}{section}
\DeclareMathOperator{\BUC}{BUC}
\providecommand{\bbs}[1]{\left(#1\right)}
\newcommand{\pt}{\partial}
\newcommand{\eps}{\varepsilon}
\newcommand{\ud}{\,\mathrm{d}}
\newcommand{\8}{\infty}
\newcommand{\bR}{\mathbb{R}}
\newcommand{\bZ}{\mathbb{Z}}
\newcommand{\cal}{\mathcal}
\newcommand{\cv}{X^{\scriptscriptstyle{\text{h}}}} 
\newcommand{\vx}{\vec{x}}
\newcommand{\N}{\mathbb{N}}
\newcommand{\R}{\mathbb{R}}
\newcommand{\cE}{\mathcal{E}}
\newcommand{\cC}{\mathcal{C}}
\newcommand{\e}{\mathrm{e}}
\renewcommand{\L}{\mathrm{L}}
\newcommand{\W}{\mathrm{W}}
\newcommand{\D}{\mathrm{D}}
\renewcommand{\d}{\mathrm{d}}
\definecolor{rot}{rgb}{1.000,0.000,0.000}
\newcommand{\eff}{\mathrm{eff}}
\newcommand{\fast}{\mathrm{fast}}
\newcommand{\slow}{\mathrm{slow}}
\newcommand{\MS}{{\cal M}_{\S}}
\renewcommand{\S}{\mathrm{S}}
\newcommand{\sQ}{\mathsf{Q}}
\newcommand{\sC}{\mathsf{C}}
\newcommand{\sR}{\mathsf{R}}
\newcommand{\sH}{\mathsf{H}}
\newcommand{\sL}{\mathsf{L}}
\newcommand{\sJ}{\mathsf{J}}
\newcommand{\sv}{\mathsf{v}}
\newcommand{\sq}{\mathsf{q}}
\renewcommand{\sp}{\mathsf{p}}
\newcommand{\su}{\mathsf{u}}
\newcommand{\WM}{\mathbb{G}}
\newcommand{\dom}{\sC}
\newcommand{\x}{\mathrm{x}}
\begin{document}

\title[Convergence of Fast-Slow Chemical Reactions]{Fast-slow chemical reactions:  convergence of Hamilton-Jacobi equation and variational representation
}

\author[Y. Gao]{Yuan Gao}
\address{Department of Mathematics, Purdue University, West Lafayette, IN, 47906,  USA}
\email{gao662@purdue.edu}
\author[A. Stephan]{Artur Stephan}
\address{Vienna University of Technology, Institute of Analysis and Scientific Computing,\\ Wiedner Hauptstr. 8-10, A-1040 Wien, Austria}
\email{artur.stephan@tuwien.ac.at}

\date{\today}
\keywords{Large deviation principle,   non-equilibrium chemical reactions, singular limit, variational formula with state-constraint, degenerate Lipschitz continuity.}

\maketitle

\begin{abstract}
    Microscopic behaviors of chemical reactions  can be described by a random time-changed Poisson process, whose   large-volume limit   determines the  macroscopic behaviors of species concentrations, including both typical and non-typical trajectories. When the reaction intensities (or fluxes) exhibit a separation of fast-slow scales, the macroscopic typical trajectory is governed by a system of $\eps$-dependent nonlinear reaction rate equations (RRE), while the non-typical trajectories deviating from the typical ones are characterized by an $\eps$-dependent exponentially nonlinear Hamilton-Jacobi equation (HJE). 
 In this paper, for general chemical reactions, we study the fast-slow limit as $\eps\to 0$ for the viscosity solutions of the associated HJE with a state-constrained boundary condition. We identify the limiting effective HJE on a slow manifold, along with an effective variational representation for the solution. Through the uniform convergence of the viscosity solutions and the $\Gamma$-convergence of the variational solution representations, we rigorously show that all   non-typical (and also typical) trajectories   are concentrated  on the slow manifold and the effective macroscopic dynamics are described by the coarse-grained RRE and HJE, respectively.
This approach for studying the fast-slow limit is applicable  to, but  not limited to, reversible chemical reactions described by gradient flows.

\end{abstract}

\section{Introduction}
In practical biochemical reactions, the separation of time scales among different reactions is a common phenomenon. For instance,  a gene in its active state transcribes mRNA at a much faster rate than when it is inactive \cite{li2011central, PhysRevLett}.  An important modeling question is to find an accurate and effective description of the system on the slow time scale such that the essential features, including macroscopic typical and non-typical behaviors,  are still present. The mathematical question then is to show that both the multiscale and the effective descriptions are close to each other in the practical limit.  

In this paper, we are interested not only in the macroscopic typical behavior of chemical reaction systems, but rather the non-typical fluctuations deviating from the typical dynamics. To be more precise,  various macroscopic behaviors of the chemical reactions result from a multiscale stochastic random time-changed Poisson representation (see reaction process $\cv$ in \eqref{CR}). In the law of large numbers regime, the typical trajectory in terms of the concentration of species is described by a nonlinear ODE for the concentrations  $x^\eps_i$ of species $i\in \mathcal{I}$, known as the \textit{reaction-rate equation} (RRE) 
\begin{equation} \label{eq:FastSlowRRE}
\begin{aligned}
     \frac{\ud x^\eps}{\ud t}=&\sum_{r\in{\cal R}}\left(\Psi_{r,\eps}^{+}(x)  -\Psi_{r,\eps}^{-}(x)\right)\gamma_{r}\\
     =&\sum_{r\in{\cal R}_{\slow}}\left(\Psi_{r}^{+}(x^\eps)-\Psi_{r}^{-}(x^\eps)\right)\gamma_{r}+\frac{1}{\eps}\sum_{r\in{\cal R}_{\fast}}\left(\Psi_{r}^{+}(x^\eps)-\Psi_{r}^{-}(x^\eps)\right)\gamma_{r}, 
     \end{aligned} \tag{{\sf RRE$_{\eps}$}}
\end{equation}
where $\Psi_{r}^{\pm}$ represents the forward/backward intensity functions and $\gamma_r$ is the reaction vector for the $r$-th fast or slow reaction with $r\in{\cal R}_{\slow}$ or $r\in{\cal R}_{\fast}$, respectively. Here, we assume that only two time scales are present, and that  the fast intensity functions are all on the   $O(1/\eps)$ scale. 
In the large deviation regime, the non-typical trajectory can be 
characterized by a Hamilton-Jacobi equation (HJE) with its variational solution representation: given initial data $u_0^\eps\in C^1(\Omega),$ 
\begin{equation}\label{eq:FastSlowHJE}
\begin{aligned}
    \pt_t u^\eps(x,t) +  H_\eps(x, \nabla u^\eps(x,t)) \leq 0, \quad (x,t)\in \Omega \times (0,+\8);\\
    \pt_t u^\eps(x,t) +  H_\eps(x, \nabla u^\eps(x,t)) \geq 0, \quad (x,t)\in \overline{\Omega} \times (0,+\8);\\
    u^\eps(x,t)=\inf_{\mathrm{x}\in \mathrm{AC}([0,t]; \overline{\Omega}),\, \mathrm{x}(t)=x} \big(u^\eps_0(\mathrm{x}(0)) +  
   \int_0^t L_\eps(\mathrm{x}_s,\dot{\mathrm{x}}_s)\ud s\big).
   \end{aligned}\tag{{\sf HJE$_{\eps}$}}
\end{equation}
This HJE results from the nonlinear semigroup solution in the WKB expansion (cf. \cite{Kubo73, Hu87, GL22}) of the chemical master equation (see Section \ref{ss:LDP} for details).  
 The multiscale Hamiltonian $H_\eps$ and Lagrangian $L_\eps$ are related by the Legendre transform, and
\begin{equation}\label{eq:Heps}
\begin{aligned}
H_{\eps}(x,p) 
  :=&\sum_{r\in{\cal R}_{\slow}}\Psi_{r}^{+}(x)\left(\e^{\gamma_{r}\cdot p}-1\right)+\Psi_{r}^{-}(x)\left(\e^{-\gamma_{r}\cdot p}-1\right)\\
  &+\frac{1}{\eps}\sum_{r\in{\cal R}_{\fast}}\Psi_{r}^{+}(x)\left(\e^{\gamma_{r}\cdot p}-1\right)+\Psi_{r}^{-}(x)\left(\e^{-\gamma_{r}\cdot p}-1\right).
\end{aligned} 
\end{equation}
   Here $\Omega\subset \R^I$ is the domain of concentration vectors, which we assume to be bounded and having   a positive distance to the boundary $\partial \R^I_+$.   
We refer to Section \ref{sec5} for more details, where viscosity solutions of state constrained HJE on bounded domains $\Omega$ are defined in Definition \ref{def:vis}. 

In the above descriptions, $\eps$ is fixed. To derive the effective descriptions, we analyze the limit  
$\eps\to 0$ for   \eqref{eq:FastSlowRRE} and \eqref{eq:FastSlowHJE}.

For the typical behaviors described by \eqref{eq:FastSlowRRE} with multiscale reaction intensities, \cite{bothe2003reaction} obtained the effective reduced RRE using   methods from deterministic dynamical system theory for singularly perturbed systems.   {The    set of fast equilibria, ${\cE}_\fast=\left\{ x:  \,\Psi_{r}^{+}(x)=\Psi_{r}^{-}(x),\, \forall r\in{\cal R}_{\fast}\right\}$} was determined in \cite{bothe2003reaction}. This set later defines the slow manifold $\MS=\cE_\fast\cap \overline{\Omega}$, to which solutions of   \eqref{eq:FastSlowRRE} converge.    On the slow manifold the effective dynamics can either be described by projected coordinates, or by coarse-grained variables that parametrize the slow manifold. \cite{kang2013separation} established a systematic probabilistic approach to determine effective chemical master equations and RRE applicable to large chemical networks. Under the detailed balance assumption, i.e., assuming the existence of an invariant measure such that the reaction process corresponding to \eqref{eq:FastSlowRRE} is reversible, gradient flow approach is used to study the convergence of \eqref{eq:FastSlowRRE} to the effective dynamics on the slow manifold;  see   \cite{MielkeStephan} for linear reaction systems and see \cite{MielkePeletierStephan} for nonlinear reversible reaction systems.

In this paper, for general non-equilibrium chemical reactions,  we  study the convergence of the corresponding \eqref{eq:FastSlowHJE} with state constraints and the characterization of its solution representation in the limit $\eps\to 0$. In particular, we are going to show that solutions  $u^\eps$ of \eqref{eq:FastSlowHJE}  converge to $u^*$, where the limit $u^*$ solves a corresponding HJE involving an effective Hamiltonian $H_\eff$ and an effective variational representation.   This  tracks and identifies the limiting behavior of the large deviation  rate function for non-typical paths as $\eps \to 0$. The effective Hamiltonian dynamics for non-typical trajectories also recovers the effective RRE. 

   \textbf{Challenges and strategies:}
  The challenges are brought by (i) the large deviation from the typical path described by RRE, (ii) the irreversibility of general reactions,  and (iii) the singularity in both the variational representation and HJE due to fast-scale reactions.  
First, to capture the multiscale dynamics for non-typical path, we study directly \eqref{eq:FastSlowHJE}. The typical trajectories solving \eqref{eq:FastSlowRRE} or the effective RRE can then be recovered as a special bi-characteristic for the  \eqref{eq:FastSlowHJE} or the effective HJE.   In contrast to solutions of the RRE, where the boundary is repelling,   we have to treat non-typical paths near the boundary carefully. In order to identify the effective variational functional in the asymptotic limit, we use $\Gamma$-convergence techniques.  The action functional provides a priori estimates even when the trajectories are close to the boundary. Hence, our result on    the $\Gamma$-convergence  holds on $\bR^I_{\geq 0}$. However, to rule out degeneracy in the Hamiltonian due to  loss of coercivity  near the boundary, we restrict  the analysis of HJE solutions to a suitable working domain and consider a state-constrained HJE.  
Secondly, for the irreversible issue, one loses the gradient flow structures in RRE, so the   convergence to the effective dynamics on the slow manifold can not be obtained via the limiting behavior of gradient flows. We propose a weaker fast detailed balance condition (see \eqref{FDB}) to ensure the fast part of RRE being a gradient flow. It requires the existence of a $x_{s}^{*}\in\bR^I_+:= \{x\in \bR^I;\, x_i > 0 \text{ for all } i\}$ such that $\Psi_{r}^{+}(x_{s}^{*})=\Psi_{r}^{-}(x_{s}^{*})$ holds for all $r\in{\cal R}_{\fast}.$ {   Following \cite{MielkePeletierStephan}, we assume in addition that the fast part of the RRE  satisfies the so-called \textit{Unique fast equilibrium condition} \eqref{eq:UFEC} to exclude repelling boundary equilibria.  These two conditions are to ensure the compactness of non-typical paths beyond the reversibility  and also allow    linking the  effective dynamics on the slow manifold to projected or coarse-grained dynamics via a uniquely determined analytic reconstruction map $\sR$.}  Relying on this compactness and an explicit reconstruction of the reactive fluxes for the fast-slow system from the effective action functional $\mathcal{L}_\eff$, we are able to show $\Gamma$-convergence of the action functionals $\mathcal{L}_\eps$ (see Theorem \ref{thm:GammaConvergence}).  Finally, due to the singularity in the coefficients of  Hamiltonian $H_\eps$, the variational representation for the state-constraint solution is not uniform in $\eps$. We take the singular limit in both the viscosity solutions to \eqref{eq:FastSlowHJE} and the variational representation for the solutions. The convergence of viscosity solutions is obtained via a Lipschitz estimate uniformly in $\eps$ and the convergence of the variational representation follows from the $\Gamma$-convergence of the action functional.   Particularly, these two convergences are matched together on the slow manifold $\MS=\cE_\fast\cap \overline{\Omega}$. We finally obtain that the limiting variational representation solves the effective HJE on the slow manifold $\MS$.    Moreover, we use the reconstruction map $\sR$ to express the effective functionals and the variational representation by coarse-grained variables.  

\textbf{Main result: } Below, we state our   main result in a concrete way, with reference to the conditions, notations, and explicit functionals provided later (see $L_\eff$ in \eqref{eq:EffectiveLagrangian}, $H_\eff$ in \eqref{eq:EffectiveHamiltonian}, Assumption   \ref{assumption} and   $\MS$ in \eqref{eq:MS}). 

\begin{thm*}
      Let $\Omega\subset \R^I$ be a  bounded open domain such that $\overline{\Omega} \subset \R_+^I$. Let initial data $u^\eps_0$ be well-prepared satisfying Assumption \ref{assumption}, and assume    both conditions \eqref{FDB} and \eqref{eq:UFEC}. Then the viscosity solutions $u^\eps$ to  \eqref{eq:FastSlowHJE} converges   to $u^*$ uniformly on $K\times[0,T]$, for any compact subset $K\subset\MS^o$. Moreover, for any $(x,t)\in \MS\times(0,+\8)$, the variational representation of viscosity solution 
   $$u^{\eps}(x,t)=\inf\left\{ u_{0}^\eps(\x(0))+\int_0^t L_\eps(\mathrm{x}(s), \dot{\x}(s))\ud s  :{ \x\in\mathrm{AC}([0,t],\overline{\Omega})},\x(t)=x\right\}$$ 
   converges   to the limiting variational representation
   $$u^*(x,t)=\inf \big\{u_0^*(\mathrm{x}(0)) +  
   \int_0^t L_\eff(\mathrm{x}(s), \dot{\mathrm{x}}(s))\ud s: \x\in\mathrm{AC}([0,t],\MS),\x(t)=x \big\}.$$
   Last, $u^*$ solves the effective HJE in the viscosity sense
   \begin{equation}\label{HJE0}
    \begin{aligned}
\pt_t u(x,t) + H_\eff(x,d_xu(x,t))\leq 0, \quad (x,t)\in \MS^o\times(0,+\8),\\
\pt_t u(x,t) + H_\eff(x,d_x u(x,t))\geq 0, \quad (x,t)\in \MS\times(0,+\8),\\
u(x,0) = u_0(x), \quad x\in \MS=\overline{\MS}. 
\end{aligned}     
   \end{equation}
\end{thm*}

The main theorem will be proved via the following procedures. In Section \ref{sec2}, after reviewing the motivation and basic setup for RRE and HJEs, we state both conditions   \eqref{FDB} and  \eqref{eq:UFEC}, define necessary spaces separating fast-slow dynamics, and introduce the reconstruction map $\sR$. The effective dynamics on the slow manifold $\MS$ will then be  equivalently connected to the coarse-grained or projected dynamics.  In Section \ref{sec3}, we explicitly compute the fast-slow Hamiltonian and Lagrangian with some lower bound estimates. We also compute the effective and coarse-grained Hamiltonian and Lagrangian as a preparation for later sections. In Section \ref{s:GammaConvergence}, we prove the $\Gamma$-convergence of $\eps$-dependent action functional of    general absolutely continuous curves on any bounded subset of $\R_{\geq 0}^I$ under the two conditions \eqref{FDB} and \eqref{eq:UFEC}.    The $\Gamma$-convergence result consists of the compactness for curves with finite action (see Proposition \ref{prop:Compactness}), the lower bound estimates for the action functional for any curve that converges weakly in $\L^1([0,T];\R_{\geq 0}^I)$ (see Proposition \ref{prop:liminf}), and the upper bound estimate via the construction of a strongly convergent recovery sequence (see Proposition \ref{prop:ConstructionRecoverySequence}). The resulting effective action functional regulates the trajectory staying on  {   the set of fast equilibria $\cE_\fast$.}  In Section \ref{sec5}, we prove the convergence of viscosity solutions $u^\eps$ to the state-constraint \eqref{eq:FastSlowHJE} in Corollary \ref{cor:ExistenceLimitHJE}. Due to the degeneracy of $H_\eps$ in the momentum variable $p$ in some directions, it is necessary to establish  the coercivity and Lipschitz estimates in a non-degenerate subspace. {   Due to the lack of uniform coercivity near the boundary, we also consider state-constraint solutions $u^\eps$ on the domain $\Omega$ that has   a positive distance away from $\pt \R_+^I$}. Based on   well-prepared initial data with uniform $C^1$ bounds and a vanishing gradient in the direction of fast reactions, we first obtain the uniform Lipschitz estimates in time-space for $u^\eps$ separately for fast/slow directions (see Theorem \ref{thm:HJElimit}).   This ensures the uniform convergence to a limiting solution $u^*$.
In Section \ref{sec6}, we connect the convergence of solutions of \eqref{eq:FastSlowHJE} and the $\Gamma$-convergence of the solution representation to finally identify the limiting solution $u^*(x,t)$ for $x\in \MS$ and its variational representation via the effective action functional (see Proposition \ref{prop:identify}). This limiting characterization proves that $u^*$ solves the effective HJE \eqref{HJE0}. 

 In summary, we obtain the convergence of \eqref{eq:FastSlowHJE}, without the reversibility assumption for the chemical reactions, and thus allowing applications to non-equilibrium biochemical systems. The justification of this convergence not only helps the identification of slow manifold and the effective RRE dynamics, more rich information comes along including the limiting behavior of the fluctuation estimates and the fluctuations for the effective dynamics itself.    The developed $\Gamma$-convergence approach and Hamilton-Jacobi method for characterizing effective non-typical dynamics and fluctuation information does not require reversibility or linearity, so we believe it can be extended to other stochastic systems.

In previous works, $\Gamma$-convergence has been used to derive effective systems in the context of fast-slow chemical reaction systems. For gradient-flows, $\Gamma$-convergence for the functionals that define a gradient structure is derived in \cite{MielkeStephan} for linear reaction systems, in \cite{MielkePeletierStephan} for nonlinear reversible reaction systems and in \cite{Stephan21} for linear reaction-diffusion systems. There, the detailed-balance assumption is crucial to apply gradient flow approaches.  {   The $\Gamma$-convergence result from Theorem \ref{thm:GammaConvergence} is an extension to irreversible reactions under the same assumption \eqref{eq:UFEC} as in \cite{MielkePeletierStephan}    and an additional condition \eqref{FDB}.}
In terms of large deviations for multiscale chemical reactions, the formal WKB expansion approach has been extensively studied and applied in biochemical systems, cf. \cite{PhysRevLett, buice2010systematic, bressloff2013metastability}.   Rigorous large deviation results for chemical reaction processes are limited to specific two-scale system \cite{li2017large} or under linear reaction assumptions.   $\Gamma$-convergence for the large-deviation rate functional of both concentration and fluxes is shown in \cite{PeletierRenger} for general irreversible but linear fast-slow reactions.  
In terms of singular limit of general HJEs raising from optimal control problem with two time scales, there are thorough studies via similar PDE approach such as asymptotic expansions or $\Gamma$-convergence, cf. \cite{bardi1997optimal, alvarez2007multiscale}. However, due to the separation of fast-slow domain variables in HJEs with or without periodic assumptions, those fall under homogenization problems, so reviewing such literature goes beyond the scope of our paper.

The remaining paper is outlined as follows. 
In Section \ref{sec2}, we review the basic setup and motivations for studying the singular limit of \eqref{eq:FastSlowHJE}. In Section \ref{sec3}, we compute explicitly the fast-slow, effective and coarse-grained Hamiltonian/Lagrangian.  In Section \ref{s:GammaConvergence}, we prove the $\Gamma$-convergence of $\eps$-dependent action functional to the effective one. In Section \ref{sec5}, we obtain the convergence of the viscosity solution to \eqref{eq:FastSlowHJE}. In Section \ref{sec6}, we obtain the variational representation for the limiting viscosity solution and identify the effective HJE.

\section{Setup and basic properties for $\eps$-dependent fast-slow chemical reaction system}\label{sec2}
In this section, we review the basic setup for $\eps$-dependent fast-slow chemical reaction system with the associated \eqref{eq:FastSlowRRE} and \eqref{eq:FastSlowHJE}. First, we introduce definitions and properties of fast-slow RRE with conserved quantities. We also introduce the fast-detailed balance assumption and provide preliminary propositions on characterization of the effective RRE and the reconstruction map. Finally, we derive the HJE using WKB expansion for the chemical master equation.
\subsection{Chemical reaction systems}\label{ss:CRS}
Chemical reactions involving $I$  species $X_i$, $i\in \cal I =\{1,\cdots,I\}$ and $R$ reactions, indexed as $r\in \cal R=\{1,\cdots,R\}$, can be kinematically described as
\begin{equation}\label{CRCR}
\text{reaction }r\in\cal R: \quad \sum_{i} \gamma_{ri}^+ X_{i} \quad \ce{<=>[k_r^+][k_r^-]} \quad   \sum_i \gamma_{ri}^- X_i,
\end{equation}
where the nonnegative integers $\gamma_{ri}^\pm \geq 0$ are stoichiometric coefficients and $k_r^\pm\geq 0$ are the reaction rates for the $r$-th forward and backward reactions. Throughout the paper we will assume the following type of weak-reversibility: $\forall r\in \cal{R}$ we have $k_r^+=0$ if and only if $k_r^-=0$. The column vector $\gamma_r:= \gamma_r^- - \gamma_r^+ := \bbs{\gamma_{ri}^- - \gamma_{ri}^+}_{i=1:I}\in \bZ^I$ is called the reaction vector for the $r$-th reaction, counting the net change in molecular numbers for species $X_i$.     For notational convenience, we introduce the so-called \textit{Wegscheider matrix} $\WM\in\R^{R\times I}$ defined by
\begin{equation}\label{eq:WM}
    \WM\in\R^{R\times I},\quad \WM_{r,i} := \gamma_{r,i}.
\end{equation}
  
Let $\mathbb{N}$ be the set of natural numbers including zero. In this paper, all vectors $X= \bbs{X_i}_{i=1:I} \in \mathbb{N}^I$ and $\bbs{\Psi_r}_{r\in \cal R},\, \bbs{k_r}_{r\in \cal R}\in \bR^R$ are column vectors.
Let $\mathbb{N}$ be   the state space for the counting process $X_i(t)$, representing the number of each species $i=1,\cdots,I$ in the biochemical reactions. With the reaction container size $1/h\gg 1$, the process $\cv_i(t)=hX_i(t)$     can be modeled as  the random time-changed Poisson representation for chemical reactions  (see \cite{kurtz1980representations, Kurtz15}):
\begin{align}\label{CR}
\cv(t) = \cv(0) + \sum_{r\in\cal R}  \gamma_{r} h \Bigg(  Y^+_r  \bbs{\frac{1}{h}\int_0^t {\Psi}^+_r(\cv(s))\ud s} 
- Y^-_r  \bbs{\frac{1}{h}\int_0^t {\Psi}^-_r(\cv(s))\ud s}\Bigg).
\end{align}
Here, for the $r$-th reaction channel, $Y^\pm_{r}(t)$ are i.i.d. unit-rate Poisson processes, and $\Psi_r^\pm(\cv)$ is the  intensity function. We will use the common form for the intensity function $\Psi_r^\pm(\cv)$ that is given by the macroscopic law of mass action (LMA):
\begin{equation}\label{lma}
\Psi^\pm_r(\vx) = k^\pm_r  \prod_{i=1}^{I} \bbs{x_i}^{\gamma^\pm_{ri}}.
\end{equation}

Macroscopically, the change of concentration can be described by the so-called \textit{reaction-rate equation} (RRE), given by
\[
\dot{x}(t)=R(x(t)):=\sum_{r\in{\cal R}}\left(\Psi_{r}^{+}(x)-\Psi_{r}^{-}(x)\right)\gamma_{r},\quad\Psi_{r}^{\pm}(x)=k_{r}^{\pm}\prod_{i}x_{i}^{\gamma_{i}^{\pm,r}},
\]
where the natural state space of concentrations is given by $\sC:=\bR^I_{\geq 0}:= \{x\in \bR^I;\, x_i \geq 0 \text{ for all } i\}$ as the largest living space for $x$. We also recall $\bR^I_+= \{x\in \bR^I;\, x_i > 0 \text{ for all } i\}$.

Although the (RRE) is not our primary objective of investigations, we collect here some properties, which will also be  important later. 
Important quantities that help to analyze the (RRE) are the so-called \textit{conserved quantities} $q\in\R^{I}$ such that $q\in\Gamma^{\perp}=\left\{ q\in\R^{I}:\forall\gamma\in\Gamma:q\cdot\gamma=0\right\} $, where  we define the stoichiometric subspace $\Gamma:=\mathrm{span}\left\{ \gamma_{r}:r\in{\cal R}\right\} $.
(Note that we do not assume that they are linearly independent.) 
In particular, for the RRE, $q\cdot x(t)$ is not changed along the evolution (which also clarifies their name).
Fixing a basis $q_{1},\dots,q_{m}$ of $\Gamma^{\perp}$, we define
the matrix $Q\in\R^{m\times I}$ by its adjoint $Q^{T}=(q_{1},\dots,q_{m})\in\R^{I\times m}$.
By construction, $Q^{T}$ is injective, $Q$ is surjective and $\mathrm{ker}(Q)=\Gamma$.


\subsection{Fast-slow RRE and conserved quantities}\label{ss:FSRRE}

  As explained in the introduction, we 
 assume the rate $k_r$ for all reactions appear to be only two scales. That is, the fast reaction has a $1/\eps$ order  while the slow reaction has  order $O(1)$.  To be more precise, we split
\[
{\cal R}={\cal R}_{\slow}\,  \cup\, {\cal R}_{\fast}
\]
such that 
\[
k_{r}^{\pm}=k_{r}^{\pm}(\eps)=\begin{cases}
\frac{1}{\eps}k_{r}^{\pm}, & r\in{\cal R_{\fast}}\\
k_{r}^{\pm}, & r\in{\cal R_{\slow}}.
\end{cases}
\]
Then the fast-slow RRE becomes \eqref{eq:FastSlowRRE}, i.e.,
\begin{align} 
\dot{x}(t) & =R_{\slow}(x(t))+\frac{1}{\eps}R_{\fast}(x(t))  :=\sum_{r\in{\cal R}_{\slow}}\left(\Psi_{r}^{+}(x)-\Psi_{r}^{-}(x)\right)\gamma_{r}+\frac{1}{\eps}\sum_{r\in{\cal R}_{\fast}}\left(\Psi_{r}^{+}(x)-\Psi_{r}^{-}(x)\right)\gamma_{r}.
\end{align}

 Similar to the general system, we
define the fast stoichiometric subspace $\Gamma_{\fast}=\mathrm{span}\left\{ \gamma_{r}:r\in{\cal R}_{\fast}\right\} $.
Then, we have $\Gamma^{\perp}\subset\Gamma_{\fast}^{\perp}$, and
$m_{\fast}=\mathrm{dim}\Gamma_{\fast}^{\perp}\geq m=\mathrm{dim}\Gamma^{\perp}$.
Extending the basis $q_{1},\dots,q_{m}$ of $\Gamma^{\perp}$ to a
basis $q_{1},\dots,q_{m_{\fast}}$of $\Gamma_{\fast}^{\perp}$, we
define the conservation matrix $Q_{\fast}:\R^{I}\to\R^{m_{\fast}}$
by $Q_{\fast}^{T}=(q_{1},\dots,q_{m_{\fast}})\in\R^{I\times m_{\fast}}$.
In particular, we have
\[
\mathrm{ker}Q_{\fast}=\Gamma_{\fast},\quad\mathrm{range}Q_{\fast}^{T}=\Gamma_{\fast}^{\perp}.
\]
Moreover, we set 
\begin{equation}\label{sQ}
\sQ:=\left\{ Q_{\fast}x\in\R^{m_{\fast}}:x\in\dom\right\}.    
\end{equation}
In the following we will call, elements $\sq\in\sQ$ \textit{fast conserved quantities} because they are the slow dynamic variables and do not change for the fast part of the evolution.    For future reference, we also define the \textit{fast} Wegscheider matrix for the fast conserved quantities
\begin{equation}\label{eq:WMfast}
    \WM_\fast\in\R^{R\times m_\fast},\quad \WM_\fast=\WM Q^T_\fast.
\end{equation}

Heuristically,
for small times the fast part $R_{\fast}(c(t))$ of \eqref{eq:FastSlowRRE} will dominate, while
for larger times ($t>\eps$) the slow reactions drive the evolution
and the fast parts are in equilibrium.
Thus we define the set of fast equilibria  that defines later the  slow manifold of the evolution:
\[
{\cal E}_{\fast}:=\left\{ x\in\dom:\quad\forall r\in{\cal R}_{\fast}:\Psi_{r}^{+}(x)=\Psi_{r}^{-}(x)\right\} .
\]
For consistency, we cite the following classical convergence result, which goes back to D. Bothe \cite{bothe2003reaction}.
\begin{thm}[\cite{bothe2003reaction}]\label{thm:EffectiveRRE}
    Let $x^\eps$ be the solution of the fast-slow   \eqref{eq:FastSlowRRE} with initial values $x^\eps(0)=x^\eps_0>0$. Assume that $x^\eps_0\to x^*_0>0$. Then we have $x^{\eps}\to x^{*}\in C([0,T],\R_{+}^{I})$, where $x^*$ solves the following effective RRE: we have $x^{*}(t)\in{\cal E}_{\fast}$ for $t>0$ and its evolution is described by the slow reactions only and a Lagrange parameter that
forces the evolution to stay on that set, i.e.
 \begin{equation}\label{effectiveRRE}
\dot{x}^{*}=R_{\slow}(x^{*}(t))+\lambda(t),\quad x^{*}(t)\in{\cal E}_{\fast},\quad\lambda(t)\in\Gamma_{\fast},\quad x^{*}(0)=x_{0}^{*}.
 \end{equation} 
\end{thm}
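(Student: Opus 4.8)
The plan is to treat this as a Tikhonov-type singular perturbation result, exploiting the gradient-flow structure of the fast subsystem supplied by the fast detailed balance condition \eqref{FDB}. The organizing idea is the split into a \emph{slow} variable $\sq^\eps:=Q_\fast x^\eps$ and a \emph{fast} variable relaxing onto $\cE_\fast$. The crucial algebraic fact is that $Q_\fast$ annihilates the fast drift: since $R_\fast(x)\in\Gamma_\fast=\ker Q_\fast$, one has $\dot{\sq}^\eps=Q_\fast R_\slow(x^\eps)$ with no $1/\eps$ term, so the slow variable evolves on the $O(1)$ scale while the fast variable is driven to equilibrium. I would assemble three ingredients: (i) uniform a priori bounds and equicontinuity of $\sq^\eps$; (ii) an entropy-dissipation estimate pinning the limit onto $\cE_\fast$; and (iii) the reconstruction map $\sR$ from \eqref{eq:UFEC} to identify the limit and produce the Lagrange term $\lambda$.

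For step (i), the exact conservation law $Q x^\eps(t)=Q x^\eps_0$ (valid because $R_\slow(x),R_\fast(x)\in\Gamma=\ker Q$) together with mass-action positivity and a strictly positive conserved quantity confines $x^\eps$ to a fixed compact subset of $\R_+^I$, uniformly in $\eps$; the condition \eqref{eq:UFEC}, excluding boundary equilibria, is what keeps the trajectories bounded away from $\partial\R_+^I$. Since $\dot{\sq}^\eps=Q_\fast R_\slow(x^\eps)$ is bounded on this compact set, the slow variables $\sq^\eps$ are uniformly Lipschitz in $t$, so by Arzel\`a--Ascoli a subsequence satisfies $\sq^\eps\to\sq^*$ uniformly on $[0,T]$, with $\sq^*(0)=Q_\fast x_0^*$.

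For step (ii), let $x_s^*\in\R_+^I$ be the point from \eqref{FDB} and set $E(x)=\sum_i\big(x_i\log(x_i/x_{s,i}^*)-x_i+x_{s,i}^*\big)$, so $\nabla E(x)=\log(x/x_s^*)$. Using the law of mass action and detailed balance one computes $\gamma_r\cdot\nabla E(x)=\log(\Psi_r^-(x)/\Psi_r^+(x))$ for $r\in\cR_\fast$, whence $\nabla E(x)\cdot R_\fast(x)=-D_\fast(x)$ with the nonnegative dissipation $D_\fast(x):=\sum_{r\in\cR_\fast}\big(\Psi_r^+(x)-\Psi_r^-(x)\big)\log\big(\Psi_r^+(x)/\Psi_r^-(x)\big)\ge 0$, vanishing exactly on $\cE_\fast$. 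Along the flow, $\tfrac{\d}{\d t}E(x^\eps)=\nabla E(x^\eps)\cdot R_\slow(x^\eps)-\tfrac1\eps D_\fast(x^\eps)$; integrating and using boundedness of $E$ and of $\nabla E\cdot R_\slow$ on the compact set gives $\int_0^T D_\fast(x^\eps)\,\d t\le C\eps$, so $D_\fast(x^\eps)\to 0$ in $\L^1([0,T])$. Thus for a.e.\ $t$ (along a subsequence) $x^\eps(t)$ approaches $\cE_\fast$; combining with $Q_\fast x^\eps(t)\to\sq^*(t)$ and the fact that \eqref{eq:UFEC} renders $\cE_\fast\cap\{Q_\fast\,\cdot\,=\sq\}$ the single point $\sR(\sq)$, I conclude $x^\eps(t)\to x^*(t):=\sR(\sq^*(t))\in\cE_\fast$ for $t>0$. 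Finally, $\sq^*$ solves the well-posed reduced ODE $\dot{\sq}^*=Q_\fast R_\slow(\sR(\sq^*))$; differentiating $x^*=\sR(\sq^*)$ and setting $\lambda:=\dot{x}^*-R_\slow(x^*)$, the identity $Q_\fast\dot{x}^*=Q_\fast R_\slow(x^*)=\dot{\sq}^*$ forces $Q_\fast\lambda=0$, i.e.\ $\lambda(t)\in\ker Q_\fast=\Gamma_\fast$, which is exactly \eqref{effectiveRRE}.

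The main obstacle is the behavior near $t=0$ and the upgrade from $\L^1$ to pointwise relaxation. The bound $\int_0^T D_\fast(x^\eps)\,\d t\le C\eps$ controls dissipation only in an averaged sense, so promoting it to $x^\eps(t)\to\sR(\sq^*(t))$ for \emph{every} $t>0$, uniformly on $[\delta,T]$, requires a quantitative relaxation estimate: that once near $\cE_\fast$ the fast gradient flow keeps the trajectory exponentially close to the slow manifold, i.e.\ normal hyperbolicity of $\cE_\fast$ guaranteed by \eqref{FDB}--\eqref{eq:UFEC}. Relatedly, unless the datum is well-prepared ($x_0^*\in\cE_\fast$) there is an initial boundary layer: convergence of the full state is uniform only on $[\delta,T]$ and $x^*(0^+)=\sR(Q_\fast x_0^*)$ need not equal $x_0^*$, so the stated $x^*(0)=x_0^*$ and continuity up to $t=0$ hold precisely for well-prepared data, while in general only the slow variable $\sq^\eps$ converges up to $t=0$. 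Maintaining positivity throughout — ensuring $x^\eps$ never reaches $\partial\R_+^I$, where the intensities degenerate — is the other delicate point, and is exactly what \eqref{eq:UFEC} is designed to rule out.
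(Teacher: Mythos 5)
You should know at the outset that this paper contains no proof of Theorem \ref{thm:EffectiveRRE}: it is quoted from \cite{bothe2003reaction}, whose own argument is a deterministic singular-perturbation (dynamical-systems) one. Your route — isolating the slow variable $\sq^\eps=Q_\fast x^\eps$ (correctly using $R_\fast(x)\in\Gamma_\fast=\ker Q_\fast$ to kill the $1/\eps$ term), an entropy-dissipation estimate with $E=\mathcal{H}(\cdot\,|x_s^*)$, and identification of the limit through the reconstruction map $\sR$ — is instead a gradient-flow-type argument in the spirit of \cite{MielkePeletierStephan}, and it closely parallels this paper's own compactness proof for the action functionals (Step 4 of Proposition \ref{prop:Compactness}): there the bound $\frac{1}{\eps}\int_0^T\big(\sqrt{\Psi_r^+(\x^\eps)}-\sqrt{\Psi_r^-(\x^\eps)}\big)^2\d t\leq \tilde C$ plays exactly the role of your $\int_0^T D_\fast(x^\eps)\d t\leq C\eps$, and \eqref{eq:UFEC} together with continuity of $\sR$ upgrades a.e.\ convergence of values into $\cE_\fast$ to $\L^p$ convergence of the trajectory. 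One structural caveat: your proof invokes \eqref{FDB} and \eqref{eq:UFEC}, which the theorem as stated does not assume (the paper introduces them only afterwards, for the fluctuation analysis), so you are proving a narrower statement than the one cited.

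Beyond that, there are genuine gaps. First, your uniform a priori bound rests on ``a strictly positive conserved quantity,'' an unstated extra hypothesis which general mass-action networks need not satisfy; and \eqref{eq:UFEC} does not by itself keep $x^\eps$ uniformly away from $\partial\R_+^I$ — it excludes boundary fast-equilibria, not boundary excursions of the $\eps$-dependent trajectories — so both the confinement and the bounds on $E$ and $\nabla E\cdot R_\slow$ need an actual argument (the sign structure exploited in Lemma \ref{lem:EstimateCompactness} does make the entropy inequality work up to the boundary, but you should say so rather than assume interior trajectories). Second, and most importantly, the theorem's conclusion $x^\eps\to x^*$ in $C([0,T],\R_+^I)$ is not delivered: the dissipation bound is only averaged in time, so your argument yields a.e.\ (hence $\L^p$) convergence, and the upgrade to locally uniform convergence on $(0,T]$ — which you yourself say requires a quantitative relaxation estimate, i.e.\ the attractivity of $\cE_\fast$ under the fast flow — is precisely the hard core of Bothe's proof and is named but deferred, not carried out. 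Third, your (correct and valuable) observation about the initial layer cuts against the statement you are proving: since $\cE_\fast$ is closed, a limit $x^*\in C([0,T])$ with $x^*(t)\in\cE_\fast$ for $t>0$ and $x^*(0)=x_0^*$ forces $x_0^*\in\cE_\fast$, so what your completed argument would establish is convergence in $C([\delta,T])$ for $\delta>0$ plus convergence of $\sq^\eps$ on all of $[0,T]$, with the verbatim statement holding only for well-prepared data. In short: the soft parts of your plan are sound and essentially replicate Proposition \ref{prop:Compactness}, but the quantitative relaxation step that distinguishes the RRE convergence theorem from an $\L^p$-compactness statement is missing.
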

  This theorem characterizes the effective evolution. Moreover, we will see in Proposition \ref{prop:DifferentWaysEffectiveRRE} that the effective evolution can also be described in other ways by using projections, or coarse-grained variables.

\subsection{Assumptions on the fast reaction part and the reconstruction map}
Instead of analyzing the \eqref{eq:FastSlowRRE}, we are going to investigate the fluctuations around the typical path. For that we need two assumptions on the reaction system. The first assumption states that the fast reaction part $x'=R_\fast(x)$ is a detailed balanced reaction system. The second assumption is a condition on the fast equilibria $\cE_\fast$, which allows a resolution of   $\cE_\fast$ by the fast conserved quantities $\sq\in\sQ$.

\begin{asmp}[Fast-detailed balance assumption]\label{ass:FDB}
    Suppose there is a positive equilibrium state $x_{s}^{*}\in\R^I_+$, such that the fast reactions are in detailed balance, i.e.
\begin{equation}\label{FDB}
\exists x_{s}^{*}\in\R_{+}^{I}\,\,  \text{ such that }\,\, \Psi_{r}^{+}(x_{s}^{*})=\Psi_{r}^{-}(x_{s}^{*}),\,\, \forall r\in{\cal R}_{\fast}. \tag{{\sf FDB}}
\end{equation}
\end{asmp}

This assumption implies that the fast reaction part $x'=R_\fast(x)$ is a detailed-balanced reaction system with equilibrium $x_{s}^{*}$. Note that $x_{s}^{*}$ may not be an equilibrium of the whole system.

A direct calculation shows that the fast detailed balance condition \eqref{FDB}, can be rewritten in the following equivalent
ways, which will be useful later. It is also convenient to define the   subset of positive  fast equilibria   by 
\[
\mathcal{E}_{\fast,+}:=\mathcal{E}_\fast\cap\R_+^I.
\]
\begin{lem}\label{lem:FDB}
Suppose \eqref{FDB} holds. Then
\begin{enumerate}
\item $\forall r\in{\cal R}_{\fast}:\,\, \Psi_{r}^{+}(x_{s}^{*})=k_{r}^{+}\prod_{i}x_{s,i}^{*\gamma_{i}^{+,r}}=k_{r}^{-}\prod_{i}x_{s,i}^{*\gamma_{i}^{-,r}}=\Psi_{r}^{-}(x_{s}^{*})$ and $\log\left(k_{r}^{+}/k_{r}^{-}\right)=\gamma_{r}\cdot\log x_{s}^{*}$.
\item $\forall r\in{\cal R}_{\fast},   \forall x\in\R^I_+  :\gamma_{r}\cdot\log(x/x_{s}^{*})=\log(\Psi_{r}^{-}(x)/\Psi_{r}^{+}(x))$. 
\end{enumerate}
In particular, we have the following equivalent characterization of the set of fast equilibria:
\begin{equation}\label{eq:FDB}
 x\in \mathcal{E}_{\fast,+} \quad \Leftrightarrow \quad \forall r\in{\cal R}_{\fast}:\gamma_{r}\cdot\log(x/x_{s}^{*})=0 \quad \Leftrightarrow \quad \D\mathcal{H}(x|x_s^*)\in\Gamma^T_\fast ,   
\end{equation}
where $\mathcal{H}(x|x_s^*)$ is the relative entropy, i.e., $\mathcal{H}(x|y):=\sum_{i=1}^I x_i\log (x_i/y_i)-x_i+y_i$.

\end{lem}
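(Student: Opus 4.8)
The plan is to prove the entire statement by directly unwinding the law-of-mass-action form \eqref{lma} of the intensities and taking logarithms; no structural idea is required, so the argument is essentially bookkeeping, the only care points being that $x_s^*\in\R_+^I$ keeps every logarithm well-defined and that the derivative of the relative entropy is computed correctly.

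For part (1), I would note that the two outer equalities are simply the definition $\Psi_r^\pm(x_s^*)=k_r^\pm\prod_i (x_{s,i}^*)^{\gamma_i^{\pm,r}}$ from \eqref{lma}, while the middle equality is exactly the hypothesis \eqref{FDB}. Because $x_s^*\in\R_+^I$, all factors are strictly positive, so I may take $\log$ of the detailed-balance equation, collect the exponents, and use $\gamma_r=\gamma_r^--\gamma_r^+$ to read off $\log(k_r^+/k_r^-)=\gamma_r\cdot\log x_s^*$.

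For part (2), I would compute $\log(\Psi_r^-(x)/\Psi_r^+(x))$ for $x\in\R_+^I$ straight from \eqref{lma}, obtaining $\log(k_r^-/k_r^+)+\gamma_r\cdot\log x$, and then substitute the identity of part (1) in the form $\log(k_r^-/k_r^+)=-\gamma_r\cdot\log x_s^*$ to collapse the expression to $\gamma_r\cdot\log(x/x_s^*)$, with the quotient and logarithm taken componentwise.

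For the concluding chain of equivalences, I would first observe that for $x\in\R_+^I$ every genuinely fast intensity is strictly positive (by the weak-reversibility convention $k_r^+=0\iff k_r^-=0$), so $\Psi_r^+(x)=\Psi_r^-(x)$ is equivalent to $\log(\Psi_r^-(x)/\Psi_r^+(x))=0$; applying part (2) yields the first equivalence. The only genuine computation for the second equivalence is $\D\mathcal{H}(x|x_s^*)=\log(x/x_s^*)$, where the $+1$ produced by differentiating $x_i\log(x_i/x_{s,i}^*)$ cancels the $-1$ from the term $-x_i$. Hence the requirement $\gamma_r\cdot\log(x/x_s^*)=0$ for all $r\in\cal R_\fast$ says precisely that $\D\mathcal{H}(x|x_s^*)$ is orthogonal to every fast reaction vector, i.e.\ it lies in $\Gamma_\fast^\perp$, which I take to be the intended reading of $\Gamma_\fast^T$ in the statement. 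Since this lemma is purely definitional, I anticipate no real obstacle beyond keeping the $\pm$ indices and the componentwise logarithms straight.
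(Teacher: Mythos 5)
Your proof is correct and is precisely the ``direct calculation'' the paper alludes to (the paper omits the proof of this lemma entirely, introducing it with ``A direct calculation shows\dots''): take logarithms of the mass-action intensities at $x_s^*$, substitute into $\log(\Psi_r^-/\Psi_r^+)$, and compute $\D\mathcal{H}(x|x_s^*)=\log(x/x_s^*)$. Your reading of $\Gamma_\fast^T$ as $\Gamma_\fast^\perp$ and your handling of the degenerate case $k_r^\pm=0$ via the weak-reversibility convention are both the intended interpretations.
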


   Moreover, we impose the following non-trivial structural assumption on the set of fast equilibria $\cE_\fast$. The same assumption has been stated in \cite{MielkePeletierStephan} for the $\Gamma$-convergence result for fast-slow detailed balance chemical reaction systems. Under Assumption \ref{ass:FDB}, it adapts to our situation as well.

\begin{asmp}[Unique fast equilibrium condition] For all $\sq\in\sQ = \{Q_\fast x |x\in\sC\}$ there is exactly one equilibrium of $x'=R_\fast(x)$ in the invariant subset $\{x\in\sC|Q_\fast x=\sq\}$, i.e.
\begin{equation}\label{eq:UFEC}
    \forall \sq\in\sQ:\quad \sharp (\{x\in\sC|Q_\fast x=\sq\} \cap \cE_\fast)=1 \tag{{\sf UFE}}.
\end{equation}
By $\sR:\sQ\to\sC$ we denote the mapping, s.t. $\{\sR(\sq)\}=\{x\in\sC|Q_\fast x=\sq\} \cap \cE_\fast$.    
\end{asmp}
The map $\sR$ will be called \textit{reconstruction map} because it resolves the fast equilibria $\cE_\fast$, where the slow evolution takes place, in terms of the fast conserved quantities of the evolution.

In addition (as in \cite{MielkePeletierStephan}), we impose the following positivity assumption on $\sR:$
\begin{equation}\label{eq:PositivityR}
    \exists \bar \sq\in\sQ:\forall\theta\in]0,1], \forall \sq\in\sQ, \forall i\in\mathcal{I}:\quad \sR(\sq+\theta\bar\sq)_i>0 \text{ and } \sR(\sq+\theta \bar \sq)_i\geq \sR(\sq)_i.
\end{equation}
We note that the positivity assumption is only needed in one technical step, namely for the construction of the recovery sequence for trajectories ``touching" the boundary $\partial\sC$. (Note,  considering the setting for the Hamilton-Jacobi equation in Section \ref{sec5}, where the underlying domain $\Omega$ has   a positive distance to the boundary, this assumption is not needed.) We also expect that with a more careful analysis in Proposition \ref{prop:ConstructionRecoverySequence} the positivity assumption can be discarded.

The \eqref{eq:UFEC} stressed the difference between the typical path and the fluctuations. The typical path starting from a positive concentration vector converges in time to the minimizer of the relative entropy $\mathcal{H}(\cdot |x_s^*)$ in the invariant set $\{x\in\sC|Q_\fast x=\sq\}$. Other possible equilibria are on the (repelling) boundary $\partial\sC$, which the \eqref{eq:UFEC} excludes. The \eqref{eq:UFEC} really means that $\mathrm{im}(\sR)=\cE_\fast$.  The necessity of the \eqref{eq:UFEC} for our result comes from the fact that general fluctuations do not satisfy the maximum principle. Note that the \eqref{eq:UFEC} fails for autocatalytic reactions. The loss of compactness in that situation is discussed in \cite[Rem. 3.10]{MielkePeletierStephan}. 

The reconstuction map $\sR$ has been already introduced and analyzed in \cite{MielkePeletierStephan}. Here, we only summarize the important properties and refer to \cite{MielkePeletierStephan} for more details and the proofs.

\begin{prop}[\cite{MielkePeletierStephan}]\label{prop:PropertiesReconstructionAndProjection}
Let   \eqref{FDB} and \eqref{eq:UFEC} be satisfied. Then:
\begin{enumerate}
\item The function $\sR:\sQ\to\dom$ is continuous, and $\sR:\mathrm{int}\sQ\to\mathrm{int}\sC$ is analytic.
\item We have $Q_{\fast}\sR(\sq)=\sq$ and $\mathrm{im}(\sR)=\mathcal{E}_\fast$.
\item For all $\sq\in\mathrm{int}\sQ$ we have $Q_{\fast}\D\sR(\sq)=I_{m_{\fast}}$,
and $Q_{\fast}^{T}\D\sR(\sq)^{T}$ is a projection on $\mathrm{im}(Q_{\fast}^{T})=\Gamma_{\fast}^{\perp}$. 
\item For all $x\in\mathcal{E}_{\fast,+}$ the tangent space of the manifold of equilibria is given by $\mathrm{T}_{x}\mathcal{E}_{\fast}=\mathbb{H}(x)^{-1}\Gamma_{\fast}^{\perp},$
where $\mathbb{H}(x):=\D^{2}{\cal H}(x|x_{s}^{*})=\mathrm{diag}(1/x_{1},\dots,1/x_{I})$.
\item Define the projection $\mathbb{P}(x)$ by $\mathrm{im}\mathbb{P}(x)=\Gamma_{\fast}$,
$\mathrm{ker}\mathbb{P}(x)=\mathbb{H}^{-1}(x)\Gamma_{\fast}^{\perp}$.
Then, we have the identity $(I-\mathbb{P}(\sR(\sq)))=\D\sR(\sq)Q_{\fast}$
and, clearly, the operator $\mathbb{P}(x)$ is uniformly bounded on compact subsets of $K\subset\R^I_+$.
\end{enumerate}
\end{prop}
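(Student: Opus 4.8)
The plan is to reduce everything to the explicit exponential parametrization of the positive fast equilibria furnished by Lemma \ref{lem:FDB}, and then to read off the derivative identities (3)--(5) by differentiating the defining relation $Q_\fast\sR(\sq)=\sq$ together with elementary linear algebra in the $\mathbb{H}$-weighted geometry. Concretely, the characterization \eqref{eq:FDB} states that $x\in\mathcal{E}_{\fast,+}$ iff $\log(x/x_s^*)\in\Gamma_\fast^\perp=\mathrm{range}(Q_\fast^T)$, so every positive fast equilibrium can be written as $x=\Theta(\mu):=\big(x_{s,i}^*\exp((Q_\fast^T\mu)_i)\big)_{i=1}^I$ for a unique $\mu\in\R^{m_\fast}$ (uniqueness because $Q_\fast^T$ is injective). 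The map $\mu\mapsto\Theta(\mu)$ is analytic and an immersion, since $\D\Theta(\mu)=\mathrm{diag}(\Theta(\mu))\,Q_\fast^T=\mathbb{H}(\Theta(\mu))^{-1}Q_\fast^T$ is injective.

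For (1)--(2) I would first record that $\mathrm{im}(\sR)=\mathcal{E}_\fast$ and $Q_\fast\sR(\sq)=\sq$ are immediate from the defining property $\{\sR(\sq)\}=\{x\in\sC:Q_\fast x=\sq\}\cap\mathcal{E}_\fast$ in \eqref{eq:UFEC}. For analyticity on $\mathrm{int}\,\sQ$, set $G(\mu):=Q_\fast\Theta(\mu)$; a direct computation gives $\D G(\mu)=Q_\fast\,\mathrm{diag}(\Theta(\mu))\,Q_\fast^T$, which is symmetric positive definite because $\mathrm{diag}(\Theta(\mu))$ is positive and $Q_\fast^T$ is injective. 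In fact $G=\nabla\phi$ for the strictly convex analytic potential $\phi(\mu)=\sum_i x_{s,i}^*\exp((Q_\fast^T\mu)_i)$, so $G$ is an analytic diffeomorphism onto an open convex set; once one checks that this image equals $\mathrm{int}\,\sQ$ (equivalently, that $\sR(\sq)\in\R_+^I$ for $\sq\in\mathrm{int}\,\sQ$), uniqueness in \eqref{eq:UFEC} forces $\sR|_{\mathrm{int}\,\sQ}=\Theta\circ G^{-1}$, which is analytic by the inverse function theorem. Continuity of $\sR$ up to $\partial\sQ$ I would obtain from the stability of the minimizer of the strictly convex, coercive problem $\sR(\sq)=\argmin\{\mathcal{H}(x|x_s^*):x\in\sC,\;Q_\fast x=\sq\}$ under perturbation of the affine constraint, using that the interior Lagrange condition $\nabla\mathcal{H}(x|x_s^*)=\log(x/x_s^*)\in\Gamma_\fast^\perp$ is precisely $x\in\mathcal{E}_{\fast,+}$, so that this variational minimizer agrees with $\sR(\sq)$.

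For (3)--(4), differentiating $Q_\fast\sR(\sq)=\sq$ in $\sq\in\mathrm{int}\,\sQ$ gives $Q_\fast\D\sR(\sq)=I_{m_\fast}$ at once. Writing $P:=Q_\fast^T\D\sR(\sq)^T$ and using $\D\sR(\sq)^TQ_\fast^T=(Q_\fast\D\sR(\sq))^T=I_{m_\fast}$, one gets $P^2=P$ and $Pv=v$ for every $v\in\mathrm{range}(Q_\fast^T)=\Gamma_\fast^\perp$, so $P$ is a projection onto $\Gamma_\fast^\perp$. For the tangent space, since $\sR=\Theta\circ G^{-1}$ with $G^{-1}$ a diffeomorphism and $\Theta$ an immersion parametrizing $\mathcal{E}_{\fast,+}$, we have $\mathrm{im}\,\D\sR(\sq)=\mathrm{im}\,\D\Theta(\mu)=\mathbb{H}(x)^{-1}\mathrm{range}(Q_\fast^T)=\mathbb{H}(x)^{-1}\Gamma_\fast^\perp=\mathrm{T}_{x}\mathcal{E}_\fast$ at $x=\sR(\sq)$. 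For (5), I would first check the splitting is genuine: if $v\in\Gamma_\fast$ and $v=\mathbb{H}(x)^{-1}w$ with $w\in\Gamma_\fast^\perp$, then $v^T\mathbb{H}(x)v=\langle w,v\rangle=0$, forcing $v=0$ since $\mathbb{H}(x)>0$, and the dimensions add to $I$. The same computation shows $\Gamma_\fast$ and $\mathbb{H}(x)^{-1}\Gamma_\fast^\perp$ are orthogonal for the $\mathbb{H}(x)$-inner product, so $\mathbb{P}(x)$ is the $\mathbb{H}(x)$-orthogonal projection onto $\Gamma_\fast$; with a matrix $V$ whose columns span $\Gamma_\fast$, this is $\mathbb{P}(x)=V(V^T\mathbb{H}(x)V)^{-1}V^T\mathbb{H}(x)$, which depends continuously on $\mathbb{H}(x)=\mathrm{diag}(1/x_1,\dots,1/x_I)$ and is therefore uniformly bounded on compact $K\subset\R^I_+$. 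Finally, setting $A:=\D\sR(\sq)Q_\fast$ and using (3), $A^2=\D\sR(\sq)(Q_\fast\D\sR(\sq))Q_\fast=A$, while surjectivity of $Q_\fast$ and injectivity of $\D\sR(\sq)$ give $\mathrm{im}\,A=\mathrm{im}\,\D\sR(\sq)=\mathbb{H}(\sR(\sq))^{-1}\Gamma_\fast^\perp=\ker\mathbb{P}(\sR(\sq))$ and $\ker A=\ker Q_\fast=\Gamma_\fast=\mathrm{im}\,\mathbb{P}(\sR(\sq))$; hence $A=I-\mathbb{P}(\sR(\sq))$.

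The main obstacle is concentrated in (1): showing that $\sR$ maps $\mathrm{int}\,\sQ$ into $\R_+^I$ (so that the logarithmic chart $\Theta$ and the inverse function theorem apply) and that $\sR$ extends continuously to $\partial\sQ$, where the exponential parametrization degenerates as some $x_i\to0$. This is exactly where the structural hypothesis \eqref{eq:UFEC} (which excludes spurious boundary equilibria) and the strict convexity and coercivity of the relative entropy are indispensable; once the regularity of $\sR$ in (1) is secured, all of (2)--(5) reduce to differentiation of $Q_\fast\sR(\sq)=\sq$ and routine linear algebra.
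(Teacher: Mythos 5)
First, a point of reference: the paper itself contains \emph{no} proof of this proposition --- it states explicitly that the reconstruction map ``has been already introduced and analyzed in \cite{MielkePeletierStephan}'' and defers there for the proofs. Your attempt therefore has to stand on its own, and on its own the algebraic core is in good shape: parts (2)--(5) are correct and complete \emph{conditional on} part (1). The identity $Q_\fast\D\sR(\sq)=I_{m_\fast}$ by differentiating $Q_\fast\sR(\sq)=\sq$, the idempotency computations for $Q_\fast^T\D\sR(\sq)^T$ and for $A=\D\sR(\sq)Q_\fast$, the $\mathbb{H}(x)$-orthogonality of the splitting $\R^I=\Gamma_\fast\oplus\mathbb{H}(x)^{-1}\Gamma_\fast^\perp$, the matching of ranges and kernels (two idempotents with equal range and kernel coincide), and the formula $\mathbb{P}(x)=V\left(V^T\mathbb{H}(x)V\right)^{-1}V^T\mathbb{H}(x)$ giving uniform boundedness on compact $K\subset\R^I_+$ all check out, as does the exponential chart $\Theta$ with $\D\Theta(\mu)=\mathbb{H}(\Theta(\mu))^{-1}Q_\fast^T$, which correctly yields (4).

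The genuine gap is the one you flag in your closing paragraph and then leave as a ``check,'' and it is not a routine verification, since everything in (3)--(5) involving $\D\sR$ hangs on it: (a) that $G=Q_\fast\circ\Theta$ maps $\R^{m_\fast}$ onto exactly $\mathrm{int}\,\sQ$, equivalently that $\sR(\mathrm{int}\,\sQ)\subset\R^I_+$, which is what licenses $\sR=\Theta\circ G^{-1}$ and the inverse function theorem; and (b) continuity of $\sR$ up to $\partial\sQ$. For (a) the missing steps can be supplied along your intended lines: $Q_\fast$ is surjective, hence an open map, so $Q_\fast(\R^I_+)$ is open and convex with closure $\sQ$, and since $\mathrm{int}\,\overline{C}=C$ for any open convex $C$, one gets $\mathrm{int}\,\sQ=Q_\fast(\R^I_+)$; thus every interior fiber contains a strictly positive point, the $-\infty$ inward slope of $x\mapsto\mathcal{H}(x|x_s^*)$ at the coordinate hyperplanes forces the coercive, strictly convex constrained minimizer into $\R^I_+$, and only then do the Lagrange condition and \eqref{eq:UFEC} identify it with $\sR(\sq)$. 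For (b), your stability-of-minimizers route fails precisely where it is needed: if $\sq\in\partial\sQ$, then by the identity just proved the entire fiber lies in $\partial\sC$ (a positive fiber point would force $\sq\in\mathrm{int}\,\sQ$), so the interior Lagrange condition is unavailable and your assertion that ``this variational minimizer agrees with $\sR(\sq)$'' is unjustified on boundary fibers --- a priori the entropy minimizer there need not be an equilibrium at all. A clean repair avoids identifying boundary minimizers: $\cE_\fast$ is closed (the $\Psi_r^\pm$ are continuous), so if $\sq_n\to\sq$ and $\sR(\sq_n)$ is bounded (for interior $\sq_n$ this follows from the entropy bound $\mathcal{H}(\sR(\sq_n)|x_s^*)\le\mathcal{H}(y_n|x_s^*)$ with a bounded selection $y_n$ from the fibers, using coercivity of $\mathcal{H}$), then every subsequential limit lies in $\cE_\fast\cap\{x\in\sC:\,Q_\fast x=\sq\}$ and hence equals $\sR(\sq)$ by \eqref{eq:UFEC}; this yields continuity directly. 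With (a) and (b) so repaired, your reconstruction is sound and matches the strategy of the cited reference.
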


Using the reconstruction map $\sR:\sQ\to\sC$, we can now equivalently describe the effective evolution in Theorem \ref{thm:EffectiveRRE}.
\begin{prop}\label{prop:DifferentWaysEffectiveRRE}
    Let $x(0)\in{\cal E}_{\fast,+}
$. Then the effective dynamics \eqref{effectiveRRE} from Theorem \ref{thm:EffectiveRRE} can be equivalently described in two additional ways:
\begin{enumerate}
    \item Projected dynamics: $\dot{x}=(I-\mathbb{P}(x))R_{\slow}(x)$.
    \item Coarse-grained dynamics: $\dot{\sq}(t)=Q_{\fast}R_{\slow}(\sR(\sq(t)))$.
\end{enumerate}
\end{prop}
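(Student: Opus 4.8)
The plan is to verify the two alternative descriptions by exploiting the structural identities for the projection $\mathbb{P}(x)$ and the reconstruction map $\sR$ collected in Proposition \ref{prop:PropertiesReconstructionAndProjection}; once these are in hand the statement reduces to a direct computation. Throughout I use that, by items (4)--(5) of that proposition, $\mathbb{P}(x)$ is the projection onto $\Gamma_\fast$ with $\ker\mathbb{P}(x)=\mathbb{H}^{-1}(x)\Gamma_\fast^\perp=\mathrm{T}_x\mathcal{E}_\fast$, so that $I-\mathbb{P}(x)$ projects onto the tangent space $\mathrm{T}_x\mathcal{E}_\fast$ along $\Gamma_\fast$; I also record that $\ker Q_\fast=\Gamma_\fast=\mathrm{im}\,\mathbb{P}(x)$ yields $Q_\fast\mathbb{P}(x)=0$.

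\textbf{Step 1 (original $\Leftrightarrow$ projected).} Starting from \eqref{effectiveRRE}, the constraint $x^*(t)\in\mathcal{E}_\fast$ forces $\dot x^*(t)\in\mathrm{T}_{x^*}\mathcal{E}_\fast=\ker\mathbb{P}(x^*)$, hence $\mathbb{P}(x^*)\dot x^*=0$. Applying $\mathbb{P}(x^*)$ to $\dot x^*=R_\slow(x^*)+\lambda$ and using $\lambda\in\Gamma_\fast=\mathrm{im}\,\mathbb{P}(x^*)$, so that $\mathbb{P}(x^*)\lambda=\lambda$, I obtain $0=\mathbb{P}(x^*)R_\slow(x^*)+\lambda$; this identifies the Lagrange multiplier uniquely as $\lambda=-\mathbb{P}(x^*)R_\slow(x^*)$ and gives $\dot x^*=(I-\mathbb{P}(x^*))R_\slow(x^*)$. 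Conversely, a solution of the projected equation has velocity in $\ker\mathbb{P}(x)=\mathrm{T}_x\mathcal{E}_\fast$, so, since $\mathcal{E}_\fast$ is an analytic manifold near positive points (item (1)), the flow keeps $x$ on $\mathcal{E}_\fast$ when $x(0)\in\mathcal{E}_{\fast,+}$, and setting $\lambda:=-\mathbb{P}(x)R_\slow(x)\in\Gamma_\fast$ recovers the form \eqref{effectiveRRE}.

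\textbf{Step 2 (projected $\Leftrightarrow$ coarse-grained).} Given a solution $x$ of the projected dynamics, set $\sq:=Q_\fast x$. Since $x(t)\in\mathcal{E}_\fast$ with $Q_\fast x(t)=\sq(t)$, the \eqref{eq:UFEC} condition gives $x(t)=\sR(\sq(t))$. Differentiating and using $Q_\fast\mathbb{P}(x)=0$ yields $\dot\sq=Q_\fast\dot x=Q_\fast(I-\mathbb{P}(x))R_\slow(x)=Q_\fast R_\slow(\sR(\sq))$. Conversely, given a solution $\sq$ of the coarse-grained equation, put $x:=\sR(\sq)$; then $x(t)\in\mathrm{im}(\sR)=\mathcal{E}_\fast$ by item (2), and the chain rule with the identity $\D\sR(\sq)Q_\fast=I-\mathbb{P}(\sR(\sq))$ from item (5) gives $\dot x=\D\sR(\sq)\,Q_\fast R_\slow(\sR(\sq))=(I-\mathbb{P}(x))R_\slow(x)$. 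The initial data match via $\sq(0)=Q_\fast x_0^*$ and $x_0^*=\sR(\sq(0))$, using $x_0^*\in\mathcal{E}_{\fast,+}$.

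The computation is elementary once Proposition \ref{prop:PropertiesReconstructionAndProjection} is available, so the only delicate points are the regularity/interior issues: to differentiate $\sR$ and to invoke items (3),(5) I need $\sq(t)\in\mathrm{int}\,\sQ$, which holds initially because $x_0^*\in\mathcal{E}_{\fast,+}$ gives $\sq(0)=Q_\fast x_0^*\in\mathrm{int}\,\sQ$ and persists on the relevant interval by continuity (equivalently, Step 1 already shows the projected flow keeps $x$ in $\mathcal{E}_{\fast,+}$). I expect this interior preservation, together with the uniqueness of the multiplier $\lambda$ (resting on the direct-sum decomposition $\R^I=\Gamma_\fast\oplus\mathrm{T}_x\mathcal{E}_\fast$ that makes $\mathbb{P}(x)$ a well-defined projection), to be the only genuinely non-mechanical ingredients, and both are already supplied by Proposition \ref{prop:PropertiesReconstructionAndProjection}.
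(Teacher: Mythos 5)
Your argument is correct. The paper itself gives no proof of this proposition (it is stated as an immediate consequence of the properties of $\mathbb{P}$ and $\sR$ imported from \cite{MielkePeletierStephan} in Proposition \ref{prop:PropertiesReconstructionAndProjection}), and your two-step verification --- identifying $\lambda=-\mathbb{P}(x^*)R_\slow(x^*)$ from $\mathbb{P}(x^*)\dot x^*=0$ and $\mathbb{P}(x^*)\lambda=\lambda$, then passing between the projected and coarse-grained forms via $Q_\fast\mathbb{P}(x)=0$ and $\D\sR(\sq)Q_\fast=I-\mathbb{P}(\sR(\sq))$ --- is exactly the intended routine computation, with the interior/regularity caveats correctly flagged.
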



\subsection{Fast-slow HJE for non-typical paths resulted from the large deviation principle}\label{ss:LDP}
In order to study the large fluctuations in the chemical reaction process \eqref{CR} that deviate from the typical path described by RRE, the WKB expansion in the chemical master equation (Kolmogorov forward equation) is a commonly used method, cf. \cite{Kubo73, Hu87, QianGe17, QianBook}.
The time marginal law of $\cv(t)$, denoted  as $p_{h}(x_i,t)$, for $x_i$ in a discrete domain, 
satisfies  forward   equation   \cite{Kurtz15, GL23} 
\begin{equation}\label{rp_eq}
\begin{aligned}
\frac{\ud}{\ud t} p_{h}(x_i, t) =& \frac{1}{h}\sum_{r\in \cal R}  \left( \Psi^+_r(x_i- \gamma_{r} h) p_{h}(x_i- \gamma_r h,t) - \Psi^-_r(x_i) p_{h}(x_i,t) \right) \\
& + \frac{1}{h}\sum_{r\in \cal R} \left( \Psi^-_r(x_i+\gamma_r h) p_{h}(x_i+\gamma_rh,t) - \Psi_r^+(x_i) p_{h}(x_i,t) \right).
\end{aligned}
\end{equation}
The exponential tilt for the probability density $p_h(x_i)=\e^{- \frac{u_h(x_i)}{h} }$ provides a   Hamiltonian viewpoint for studying the non-typical paths of the fast-slow dynamics, which happens with exponentially small probability compared with the typical path described RRE, cf. \cite{Hu87, QianGe17, GL22}. Using the exponential tilt, $u_i$ satisfies a discrete HJE (nonlinear ODE)
\begin{align*}
\partial_t u_h(x_i,t) + \sum_{r\in \cal R}\big[  &\Psi^+_r(x_i-\gamma_r h )    \e^ {\frac{u_h(x_i,t) - u_h(x_i-\gamma_r h,t)}{h} }
-    \Psi^-_r(x_i)\big]  \\   
&+ \sum_{r\in \cal R} \big[\Psi^-_r(x_i+ \gamma_r h)     \e^{ \frac{u_h(x_i,t)-u_h(x_i+\gamma_r h,t)}{h} }-      \Psi^+_r(x_i)\big]=0.
\end{align*}
Formally, as $h\to 0$, Taylor's expansion of $u_h$ with respect to $h$ leads to the following HJE 
\begin{equation}\label{HJEpsi}
  \pt_t u({x}, t) +\sum_{r\in \cal R}     \bbs{ \Psi^+_r(x)\bbs{\e^{\gamma_r  \cdot \nabla u({x},t)}   -  1} +  \Psi^-_r(x)\bbs{\e^{-\gamma_r  \cdot \nabla u(x,t)}   - 1 }}=0.
 \end{equation}
 The rigorous proof of the large deviation principle using the convergence of discrete nonlinear semigroup to the continuous one can be done through the WKB expansion in the backward equation for process $\cv$ and the convergence of the resulted monotone scheme for the first order continuous HJE, which we refer to \cite{GL23}.

 In this paper we start from the continuous HJE \eqref{HJEpsi} with   fast-slow multiscale intensity functions: $\Psi_r^\pm$ with $r\in \cal R_\slow$ and $\Psi_{r,\eps}^\pm=\frac{1}{\eps} \Psi_r^\pm$ with $r\in \cal R_\fast.$ Then the Hamiltonian in \eqref{HJEpsi} becomes $H_\eps$ in \eqref{eq:Heps}. To study the limiting behaviors and the variational representation, we consider chemical reactions restricted in a bounded domain  $\Omega$, which satisfies $\overline{\Omega}\subset \R_+^I$. Then the associated HJE on a bounded domain with state-constraint boundary condition becomes \eqref{eq:FastSlowHJE}.
The initial condition $u^\eps(x,0) = u_0^\eps(x)$ will be specified later. The definition of viscosity solution to \eqref{eq:FastSlowHJE} will be given in Definition \ref{def:vis}.



\section{Fast-slow, effective, and coarse-grained  Hamiltonian and Lagrangian}\label{sec3}
Before we study the limit of  \eqref{eq:FastSlowHJE}, we first recall the functionals, i.e. the Hamiltonian and  the Lagrangian from the introduction, see \eqref{eq:Heps}. In particular, we state explicitly the fast-slow Hamiltonian $H_\eps$ and Lagrangian $L_\eps$ for the fast-slow chemical reaction system, followed by the effective Hamiltonian $H_\eff$ and Lagrangian $L_\eff$, which will be a part of the $\Gamma$-convergence result in Section \ref{s:GammaConvergence}. Finally, we define coarse-grained functionals $\sH$ and $\sL$, which will be the expression of  the effective functionals in coarse-grained variables. Note that throughout the section the functionals are defined on subsets of the Euclidean   space and the canonical inner product is denoted by $x\cdot p = \sum_i x_ip_i$.

\subsection{Fast-slow action Hamiltonian and Lagrangian}
 Throughout this section, $\eps>0$ is fixed.
Recall from \eqref{eq:Heps} the fast-slow Hamiltonian $H_\eps:\mathbb{R}^{I}\times\mathbb{R}^{I}\to\R$. By Legendre transform (see Lemma \ref{lem:DualityEpsFunctionals} below), the fast-slow Lagrangian is given by
\begin{align}\label{eq:Leps}
L_{\eps}(x,v) & =\inf_{v=\WM^{T}J}\sum_{r\in{\cal R}}S(J_{r}|\Psi_{r,\eps}^{+}(x),\Psi_{r,\eps}^{-}(x)),
\end{align}  
where we have used the Wegscheider matrix $\WM\in\R^{R\times I}$ from \eqref{eq:WM}. Note that we have 
$\left(\WM^{T}J\right)_{i}=\sum_{r}\gamma_{r,i}J_{r}$, and $\left(\WM p\right)_{r}=\gamma_{r}\cdot p$.  The equation 
$$v=\WM^T J$$ connects the change of rate $v$ with fluxes $J$ by a gradient-type operator $\WM$, and, in the following, will be called \textit{continuity equation}.  
Above, we define  for $\alpha,\beta>0$ the   function
\[
S(J|\alpha,\beta):=\sqrt{\alpha\beta}\cdot{\cal C}\left(\frac{J}{\sqrt{\alpha\beta}}\right)-J\frac{1}{2}\log(\alpha/\beta)+\left(\sqrt{\alpha}-\sqrt{\beta}\right)^{2},
\]
as the Legendre transform of the function $(\alpha,\beta,p)\mapsto S_{\alpha,\beta}^{*}(p) := \alpha\left(\e^{p}-1\right)+\beta\left(\e^{-p}-1\right)$
with respect to the variable $p\in\R$. Here ${\cal C}$ is the Legendre transform of the cosh-function ${\cal C}^{*}(p)=2(\cosh(p)-1)$. In Proposition \ref{prop:PropertiesS},  properties of the functions
$S$ and $S^{*}$ will be  summarized.
Before that, we first show that indeed the Hamiltonian $H_{\eps}$ and
the Lagrangian $L_{\eps}$ are dual to each other.

\begin{lem}\label{lem:DualityEpsFunctionals}
For all $x\in\sC$, $v\in\R^I$, we have $H_{\eps}(x,\cdot)^{*}(v)=\sup_{p}\left(p\cdot v-H_{\eps}(x,p)\right)=L_{\eps}(x,v)$.
\end{lem}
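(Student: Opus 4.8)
The plan is to recognize $H_\eps(x,\cdot)$ as the composition of a finite separable convex function with the linear map $\WM$, and then invoke the standard duality between precomposition with a linear map and the constrained infimal projection of the conjugate. Fix $x\in\sC$ and abbreviate $\alpha_r := \Psi_{r,\eps}^+(x)$, $\beta_r := \Psi_{r,\eps}^-(x)$. Since $(\WM p)_r = \gamma_r\cdot p$, formula \eqref{eq:Heps} reads
\[
H_\eps(x,p) = \sum_{r\in{\cal R}} S^*_{\alpha_r,\beta_r}(\gamma_r\cdot p) = g(\WM p), \qquad g(q) := \sum_{r\in{\cal R}} S^*_{\alpha_r,\beta_r}(q_r),
\]
where $g$ is finite, convex and separable on $\R^R$. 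By the definition of $S$ as the $p$-Legendre transform of $S^*_{\alpha,\beta}$ (recalled before \eqref{eq:Leps}, with properties collected in Proposition \ref{prop:PropertiesS}), separability makes the conjugate additive, so $g^*(J) = \sum_r S(J_r|\alpha_r,\beta_r)$. Thus the claimed identity $H_\eps(x,\cdot)^*(v)=L_\eps(x,v)$ reduces, via \eqref{eq:Leps}, to the conjugation rule $(g\circ\WM)^*(v) = \inf_{\WM^T J = v} g^*(J)$.

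The easy inequality ``$\le$'' is immediate. For any $J$ with $v=\WM^T J$ and any $p$ we have $v\cdot p = J\cdot\WM p = \sum_r J_r(\gamma_r\cdot p)$, hence
\[
v\cdot p - H_\eps(x,p) = \sum_{r\in{\cal R}}\Big( J_r(\gamma_r\cdot p) - S^*_{\alpha_r,\beta_r}(\gamma_r\cdot p)\Big) \le \sum_{r\in{\cal R}} S(J_r|\alpha_r,\beta_r),
\]
where each summand is bounded by the supremum over the scalar $\gamma_r\cdot p$. Taking the supremum over $p$ on the left and then the infimum over admissible $J$ on the right yields $H_\eps(x,\cdot)^*(v)\le L_\eps(x,v)$.

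The reverse inequality ``$\ge$'', equivalently the absence of a duality gap, is the only nontrivial point and is where I expect the main obstacle to lie. I would appeal to the standard conjugation rule for a convex function composed with a linear map (Rockafellar, \emph{Convex Analysis}, Thm.~16.3 / Cor.~31.2.1): since $g$ is finite everywhere, $\operatorname{dom} g = \R^R$, so $\operatorname{ri}(\operatorname{dom} g)=\R^R$ trivially meets $\operatorname{range}\WM$, the constraint qualification holds, the infimum defining $L_\eps$ is attained, and $(g\circ\WM)^* = \WM^T g^*$ in the sense of the constrained infimal projection. This delivers $H_\eps(x,\cdot)^*(v) = \inf_{\WM^T J = v} g^*(J) = L_\eps(x,v)$. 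The only care needed is on $\partial\sC$, where some $\alpha_r$ or $\beta_r$ may vanish and the closed-form expression for $S$ degenerates: there one interprets $S^*_{\alpha,\beta}$ and its conjugate $S(\cdot|\alpha,\beta)$ through their lower semicontinuous extensions to $\alpha,\beta\ge 0$ (so that $S(J|\alpha,\beta)=+\infty$ forces the offending flux to vanish), after which $g$ remains a closed proper convex function with full domain and the same duality argument applies verbatim.
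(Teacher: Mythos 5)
Your proof is correct, and it runs the duality in the opposite direction from the paper. The paper starts from $L_\eps$ and computes its conjugate: writing $\sup_v\bigl\{v\cdot p-\inf_{v=\WM^T J}\sum_r S(J_r|\cdot,\cdot)\bigr\}$ as an exact joint supremum over pairs $(v,J)$ with $v=\WM^T J$, it obtains $L_\eps(x,\cdot)^*=H_\eps(x,\cdot)$ by a short sup--inf swap that involves no duality gap at all; the stated identity $H_\eps(x,\cdot)^*=L_\eps(x,\cdot)$ then follows by biconjugation, i.e.\ it implicitly uses that $L_\eps(x,\cdot)$ is proper, convex and lower semicontinuous so that $L_\eps^{**}=L_\eps$. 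You instead compute $H_\eps(x,\cdot)^*$ directly by recognizing $H_\eps(x,\cdot)=g\circ\WM$ with $g$ finite, separable and convex, and invoking the conjugation rule for precomposition with a linear map under the (trivially satisfied) constraint qualification $\operatorname{dom}g=\R^{\cal R}$. What your route buys is that it proves exactly the stated identity, makes the absence of a duality gap and the attainment of the infimum in \eqref{eq:Leps} explicit rather than deferred to an unstated biconjugation step, and handles the degenerate boundary cases $\Psi_r^{\pm}(x)=0$ cleanly via the lower semicontinuous extension of $S$; what the paper's route buys is brevity, since the sup--inf interchange it uses is an identity rather than a duality theorem. Both arguments are sound.
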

Note, that in particular, we have $L_\eps(x,v)=+\infty$ if $v\notin\Gamma$.   Recalling the definition of the matrix $Q$ of conserved quantities from Section \ref{ss:CRS}, this means that for a trajectory $t\mapsto x(t)$ with bounded $L_\eps(x(t),\dot x(t))$ we necessarily have that $Q\dot{x}=0$. 
\begin{proof}
For simplicity, we neglect the explicit $\eps$-dependence in the reaction
intensities $\Psi_{r,\eps}^{\pm}$. We compute the Legendre transform of $L_\eps$ from \eqref{eq:Leps} and obtain
\begin{align*} 
 & L_{\eps}^{*}(x,p)=\\
 & =\sup_{v}\left\{ v\cdot p-\inf_{v=\WM^{T}J}\sum_{r\in{\cal R}}S(J_{r}|\Psi_{r}^{+}(x),\Psi_{r}^{-}(x))\right\} =\sup_{v,J:v=\WM^{T}J}\left\{ v\cdot p-\sum_{r\in{\cal R}}S(J_{r}|\Psi_{r}^{+}(x),\Psi_{r}^{-}(x))\right\} \\
 & =\sup_{J}\left\{ \WM^{T}J\cdot p-\sum_{r\in{\cal R}}S(J_{r}|\Psi_{r}^{+}(x),\Psi_{r}^{-}(x))\right\} =\sup_{J}\left\{ J\cdot\WM p-\sum_{r\in{\cal R}}S(J_{r}|\Psi_{r}^{+}(x),\Psi_{r}^{-}(x))\right\} \\
 & =\sup_{J}\left\{ \sum_{r}J_{r}\left(\WM p\right)_{r}-\sum_{r\in{\cal R}}S(J_{r}|\Psi_{r}^{+}(x),\Psi_{r}^{-}(x))\right\} =\sum_{r\in{\cal R}}S^{*}_{\Psi_{r}^{+}(x),\Psi_{r}^{-}(x)}(\gamma_r\cdot p )=H_{\eps}(x,p). 
\end{align*}
\end{proof}

In the later sections we need some properties of the function $S$, which will be summarized and proved in the following proposition.
\begin{prop}
\label{prop:PropertiesS}Let $[0,\infty[\times[0,\infty[\times\R\ni(\alpha,\beta,p)\mapsto S_{\alpha,\beta}^{*}(p) := \alpha\left(\e^{p}-1\right)+\beta\left(\e^{-p}-1\right),$
and define its Legendre transform by $S(J|\alpha,\beta)=\sup_{p\in\R}\left(p\cdot J-S_{\alpha,\beta}^{*}(p)\right)$.
The function $S$ has the following properties:
\begin{enumerate}
\item For all $\alpha,\beta\geq0$, $J\in\R$, we have $S(J|\alpha,\beta)\geq0$
and $S(J|\alpha,\beta)=S(-J|\beta,\alpha)$.
\item For $\alpha,\beta>0$, we have the  the following equivalent characterizations
\begin{align*}
S(J|\alpha,\beta) & =\sqrt{\alpha\beta}\cdot{\cal C}\left(\frac{J}{\sqrt{\alpha\beta}}\right)-J\frac{1}{2}\log(\alpha/\beta)+\left(\sqrt{\alpha}-\sqrt{\beta}\right)^{2}\\
 & =\inf_{J=u-w}{\cal H}(u|\alpha)+{\cal H}(w|\beta),
\end{align*}
where ${\cal C}$ is the Legendre transform of the cosh-function ${\cal C}^{*}(p)=2(\cosh(p)-1)$ and $\cal H$ is the relative entropy.
In particular, we have
\[
S(J|\alpha,\beta)\geq-J\frac{1}{2}\log(\alpha/\beta)+\left(\sqrt{\alpha}-\sqrt{\beta}\right)^{2}.
\]
\item For $M>0$ satisfying $\alpha,\beta\leq M$, we have that $S(J|\alpha,\beta)\geq M{\cal C}\left(J/M\right)-2M$.
\end{enumerate}
\end{prop}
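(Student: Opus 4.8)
The plan is to handle the three items in order, using throughout that $S(\cdot|\alpha,\beta)$ is by definition the Legendre transform of the smooth convex function $S^*_{\alpha,\beta}$; each assertion then follows from an elementary manipulation of $S^*_{\alpha,\beta}$ together with the order-reversing and scaling rules for the Legendre transform. For item (1) I would first note that $S^*_{\alpha,\beta}(0)=0$, so that $S(J|\alpha,\beta)=\sup_p\big(pJ-S^*_{\alpha,\beta}(p)\big)\ge 0\cdot J-S^*_{\alpha,\beta}(0)=0$, giving nonnegativity for all $\alpha,\beta\ge 0$. The symmetry $S(J|\alpha,\beta)=S(-J|\beta,\alpha)$ then comes from the change of variables $p\mapsto-p$ in the supremum, using the pointwise identity $S^*_{\beta,\alpha}(-p)=S^*_{\alpha,\beta}(p)$.

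For item (2) the key device is the shift $p'=p+\tfrac12\log(\alpha/\beta)$, valid for $\alpha,\beta>0$, which turns $\alpha e^{p}+\beta e^{-p}$ into $2\sqrt{\alpha\beta}\cosh(p')$ and hence yields the closed form $S^*_{\alpha,\beta}(p)=\sqrt{\alpha\beta}\,\mathcal{C}^*(p')-(\sqrt\alpha-\sqrt\beta)^2$. Taking the Legendre transform, the shift contributes the affine term $-J\tfrac12\log(\alpha/\beta)$, the additive constant $(\sqrt\alpha-\sqrt\beta)^2$ is unaffected, and the scaling rule $(\lambda\phi)^*(J)=\lambda\,\phi^*(J/\lambda)$ with $\lambda=\sqrt{\alpha\beta}$ produces $\sqrt{\alpha\beta}\,\mathcal{C}(J/\sqrt{\alpha\beta})$; this is the first displayed identity, and since $\mathcal{C}\ge 0$ (because $\mathcal{C}^*(0)=0$) the stated lower bound is immediate. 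For the inf-convolution formula I would compute the scalar conjugate $\mathcal{H}(\cdot|\alpha)^*(p)=\alpha(e^{p}-1)$, set $f=\mathcal{H}(\cdot|\alpha)$ and $\hat g(\cdot)=\mathcal{H}(-\,\cdot|\beta)$, and observe that $f^*(p)+\hat g^*(p)=\alpha(e^{p}-1)+\beta(e^{-p}-1)=S^*_{\alpha,\beta}(p)$. Writing the claimed infimum as the inf-convolution $(f\,\square\,\hat g)(J)=\inf_{J=u-w}\mathcal{H}(u|\alpha)+\mathcal{H}(w|\beta)$ and invoking $(f\,\square\,\hat g)^*=f^*+\hat g^*$, dualization gives $\inf_{J=u-w}\mathcal{H}(u|\alpha)+\mathcal{H}(w|\beta)=S(J|\alpha,\beta)$.

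For item (3) I would again use order-reversal of the Legendre transform, so that it suffices to establish the pointwise upper bound $S^*_{\alpha,\beta}(p)\le M\mathcal{C}^*(p)+2M=2M\cosh p$ for every $p$. This reduces to $(\alpha-M)e^{p}+(\beta-M)e^{-p}\le\alpha+\beta$, which holds because $\alpha,\beta\le M$ forces the left-hand side to be $\le 0\le\alpha+\beta$. Dualizing this inequality and applying the scaling rule once more yields $S(J|\alpha,\beta)\ge\big(M\mathcal{C}^*(\cdot)+2M\big)^*(J)=M\mathcal{C}(J/M)-2M$.

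The only step calling for genuine care is the inf-convolution identity in item (2): the rule $(f\,\square\,\hat g)^*=f^*+\hat g^*$ is standard, but recovering the primal function by bidualization requires the inf-convolution to be a proper lower semicontinuous convex function. I would justify this by noting that $\mathcal{H}(\cdot|\alpha)$ and $\mathcal{H}(\cdot|\beta)$ are proper, convex, lower semicontinuous and superlinear, so the infimum defining $f\,\square\,\hat g$ is attained and the result is convex and lower semicontinuous; then $(f\,\square\,\hat g)^{**}=f\,\square\,\hat g$, and since $(f^*+\hat g^*)^*=(S^*_{\alpha,\beta})^*=S(\cdot|\alpha,\beta)$, the two expressions agree. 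The remaining manipulations are routine convex-analysis computations.
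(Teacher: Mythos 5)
Your proposal is correct and follows essentially the same route as the paper: item (3) is proved by exactly the same pointwise bound $S^*_{\alpha,\beta}(p)\le M(\mathcal{C}^*(p)+2)$ followed by order-reversal and scaling of the Legendre transform, while for items (1) and (2) the paper simply asserts that they ``follow directly from the definition by the Legendre transform,'' which is precisely what you carry out (the shift $p'=p+\tfrac12\log(\alpha/\beta)$, the scaling rule, and the inf-convolution duality $(f\,\square\,\hat g)^*=f^*+\hat g^*$ with the biconjugation justified by coercivity). Your write-up supplies the details the paper leaves implicit, and all the computations check out.
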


\begin{proof}
The first two claims follow directly from the definition by the Legendre
transform. For the third
claim, we observe that

\[
S_{\alpha,\beta}^{*}(p)=\alpha(\e^{p}-1)+\beta(\e^{-p}-1)\leq M\left(\e^{p}+\e^{-p}\right)\leq2M\cosh(p)=M({\cal C}^{*}(p)+2).
\]
Hence, 
\begin{align*}
S(J|\alpha,\beta) & =\sup_{p}\left\{ p\cdot J-S_{\alpha,\beta}^{*}(p)\right\} \geq\sup_{p}\left\{ p\cdot J-M({\cal C}^{*}(p)+2)\right\} \geq M\sup_{p}\left\{ p\cdot\frac{J}{M}-{\cal C}^{*}(p)\right\} -2M\\
 & =M{\cal C}\left(J/M\right)-2M.
\end{align*}
\end{proof}

\subsection{Effective Lagrangian and Hamiltonian}

As we will see from the $\Gamma$-convergence result (Theorem \ref{thm:GammaConvergence}), the effective Lagrangian $L_\eff$ is restricted to the slow reactions and takes into account the set of fast equilibria $\cal{E}_\fast$. It is defined as   
\begin{align}\label{eq:EffectiveLagrangian} 
L_{\mathrm{eff}}(x,v):=
\inf_{J}\left\{ \sum_{r\in{\cal R}_{\slow}}S(J_{r}|\Psi_{r}^{+}(x),\Psi_{r}^{-}(x)):\quad Q_{\fast}v = \WM ^T_\fast J\right\} 
\end{align}
for $x\in\cal{E}_\fast$,   where we have used the fast Wegscheider matrix $\WM_\fast=\WM Q^T_\fast$ from \eqref{eq:WMfast};    and $L_{\mathrm{eff}}(x,v):=+\infty$ if $x\notin\cal{E}_\fast$. Later we will restrict the domain of definition to $x\in\mathcal{E}_\fast$, see Section \ref{s:GammaConvergence}. Note that $L_\eff$ contains the slow part of the $\eps$-dependent Lagrangian without its fast-part. However the continuity equation is now contracted through the fast-conservation matrix $Q_\fast$.

The explicit expression of effective Hamiltonian $H_\eff$ is computed in the next lemma.
\begin{lem}\label{lem:HeffRepresentation}
For $x\in\cal{E}_\fast$, we have $L_{\mathrm{eff}}(x,\cdot)^{*}(p)=:H_{\mathrm{eff}}(x,p)$,
\begin{equation}\label{eq:EffectiveHamiltonian}
H_{\mathrm{eff}}(x,p)=\sum_{r\in{\cal R}_{\slow}}S_{\Psi_{r}^{+}(x),\Psi_{r}^{-}(x)}^{*}(\gamma _r\cdot p)+\chi_{\Gamma_{\fast}^{\perp}}(p),
\end{equation}
  where the characteristic function of convex analysis $p\mapsto\chi_A(p)$ is defined as 0, if $p\in A$ and $+\infty$ otherwise.  
In particular, we have that $H_{\eff}(x,p)=\infty$ if $p\notin\Gamma_{\fast}^{\perp}=\mathrm{range}Q_{\fast}^{T}$. Moreover, for all $x\in\cal{E}_\fast$ and $p\in\Gamma_{\fast}^{\perp}$ the Legendre transform $H_\eff(x,p)$ is attained by taking the supremum over $v\in T_x\cal{E}_\fast=\mathrm{Ker}(\mathbb{P}(x))$, i.e.
$$H_\eff(x,p)=\sup_{v\in T_x\cal{E}_\fast}\{p\cdot v - L_\eff(x,v)\},$$
 where the projection is defined by $\mathrm{Ker}(\mathbb{P}(x))=T_x\cal{E}_\fast$, $\mathrm{range}(\mathbb{P}(x))=\Gamma_\fast$ (see Proposition \ref{prop:PropertiesReconstructionAndProjection}). 
\end{lem}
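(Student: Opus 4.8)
The plan is to compute $H_\eff(x,\cdot)=L_\eff(x,\cdot)^*$ directly, mirroring the Legendre computation in the proof of Lemma~\ref{lem:DualityEpsFunctionals}. Fix $x\in\cE_\fast$. Since $L_\eff(x,v)$ is itself an infimum over fluxes $J$ subject to the continuity constraint $Q_\fast v=\WM_\fast^T J$, writing $-\inf_J=\sup_J$ turns the transform into a single joint supremum,
\[
H_\eff(x,p)=\sup_{v}\big(p\cdot v-L_\eff(x,v)\big)=\sup_{(v,J):\,Q_\fast v=\WM_\fast^T J}\Big(p\cdot v-\sum_{r\in{\cal R}_{\slow}}S(J_r|\Psi_r^+(x),\Psi_r^-(x))\Big),
\]
so no minimax interchange is needed. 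First I would carry out the inner supremum over $v$ for fixed $J$: the admissible $v$ fill the affine fiber $\{v:Q_\fast v=\WM_\fast^T J\}$, whose direction space is $\ker Q_\fast=\Gamma_\fast$. Hence $\sup_v p\cdot v$ over this fiber is finite exactly when $p\perp\Gamma_\fast$, i.e. $p\in\Gamma_\fast^\perp=\mathrm{range}\,Q_\fast^T$, and is $+\infty$ otherwise; this produces the indicator $\chi_{\Gamma_\fast^\perp}(p)$.

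On $\Gamma_\fast^\perp$ I would write $p=Q_\fast^T\mu$, which is possible with $\mu$ unique because $Q_\fast^T$ is injective, and evaluate $p\cdot v=\mu\cdot Q_\fast v=\mu\cdot\WM_\fast^T J=(\WM_\fast\mu)\cdot J$ on the fiber. Using $\WM_\fast=\WM Q_\fast^T$ gives $\WM_\fast\mu=\WM p$, hence $p\cdot v=(\WM p)\cdot J=\sum_r(\gamma_r\cdot p)J_r$. Because $\gamma_r\in\Gamma_\fast$ for $r\in{\cal R}_{\fast}$ while $p\in\Gamma_\fast^\perp$, the fast terms vanish, consistent with only slow fluxes entering the objective. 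The residual supremum $\sup_J\sum_{r\in{\cal R}_{\slow}}\big((\gamma_r\cdot p)J_r-S(J_r|\Psi_r^+(x),\Psi_r^-(x))\big)$ decouples over $r$ and, by the biconjugation duality between $S(\cdot|\alpha,\beta)$ and the convex function $S_{\alpha,\beta}^*$ recorded in Proposition~\ref{prop:PropertiesS}, equals $\sum_{r\in{\cal R}_{\slow}}S_{\Psi_r^+(x),\Psi_r^-(x)}^*(\gamma_r\cdot p)$, giving the stated formula \eqref{eq:EffectiveHamiltonian}.

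For the attainment claim the key observation is that, for $p\in\Gamma_\fast^\perp$, both the reward $p\cdot v=\mu\cdot Q_\fast v$ and the cost $L_\eff(x,v)$ depend on $v$ only through $\sq:=Q_\fast v$ — the latter because the defining constraint couples $v$ to $J$ solely via $Q_\fast v$. I would then invoke Proposition~\ref{prop:PropertiesReconstructionAndProjection}(5), which exhibits $\ker\mathbb{P}(x)=T_x\cE_\fast=\mathbb{H}(x)^{-1}\Gamma_\fast^\perp$ as a complement of $\Gamma_\fast=\ker Q_\fast$ in $\R^I$. Consequently $Q_\fast$ restricts to a linear isomorphism from $T_x\cE_\fast$ onto $\R^{m_\fast}$, so the supremum over all $v$ may be replaced by one over $v\in T_x\cE_\fast$ without changing the value, which is exactly the asserted identity $H_\eff(x,p)=\sup_{v\in T_x\cE_\fast}\big(p\cdot v-L_\eff(x,v)\big)$.

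The main obstacle, and the step demanding the most care, is the affine-fiber analysis that simultaneously extracts $\chi_{\Gamma_\fast^\perp}$ and collapses the fast fluxes: one must verify that on the fiber $p\cdot v$ is genuinely constant (equal to $(\WM p)\cdot J$) precisely when $p\in\Gamma_\fast^\perp$, and that the fast reactions drop out of both objective and constraint via $Q_\fast\gamma_r=0$ for $r\in{\cal R}_{\fast}$. For the final claim the only nonroutine point is that $Q_\fast|_{T_x\cE_\fast}$ is an isomorphism, which rests on the positive-definiteness of $\mathbb{H}(x)$ underpinning the splitting $\R^I=\Gamma_\fast\oplus\mathbb{H}(x)^{-1}\Gamma_\fast^\perp$ recorded in Proposition~\ref{prop:PropertiesReconstructionAndProjection}.
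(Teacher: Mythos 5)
Your proof is correct and follows essentially the same route as the paper's: rewrite the Legendre transform as a joint supremum over $(v,J)$ subject to the contracted continuity equation, extract $\chi_{\Gamma_{\fast}^{\perp}}$ from the degeneracy of $p\cdot v$ along $\ker Q_{\fast}=\Gamma_{\fast}$, reduce to $\sum_{r\in{\cal R}_{\slow}}S^*_{\Psi_r^+,\Psi_r^-}(\gamma_r\cdot p)$ by biconjugation of $S$, and obtain attainment on $T_x\cE_\fast$ from the splitting $\R^I=\Gamma_\fast\oplus T_x\cE_\fast$ of Proposition \ref{prop:PropertiesReconstructionAndProjection}. The only (harmless) stylistic difference is that you phrase the restriction to $T_x\cE_\fast$ via the isomorphism $Q_\fast|_{T_x\cE_\fast}$ rather than the paper's explicit decomposition $v=(I-\mathbb{P}(x))v+\mathbb{P}(x)v$.
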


\begin{proof}
Let us fix $x\in\cal{E}_\fast$. We compute the Legendre transform of $L_\eff$ and show that it is given by $H_\eff$. For this we observe first that 
\begin{align*}
L^{*}_\eff(x,p) & =\sup_{v}\left\{ v\cdot p-L_{\eff}(x,v)\right\} =\sup_{v,J:Q_{\fast}v=Q_{\fast}\WM^TJ }\left\{ v\cdot p-\sum_{r\in{\cal R}_{\slow}}S(J_{r}|\Psi_{r}^{+}(x),\Psi_{r}^{-}(x))\right\} \\
 & \geq\sup_{v:Q_{\fast}v=0}v\cdot p 
 =\chi_{\Gamma_{\fast}^{\perp}}(p).
\end{align*}
For $p=Q_{\fast}^{T}\sp$, a direct calculation shows that 
\begin{align*}
\left(L_{\mathrm{eff}}(x,\cdot)\right)^{*}(p=Q_{\fast}^{T}\sp) & =\sup_{v}\left\{v\cdot Q_{\fast}^{T}\sp-\inf_{J:Q_{\fast}v=Q_{\fast}\WM^TJ}\left\{ \sum_{r\in{\cal R}_{\slow}}S(J_{r}|\Psi_{r}^{+}(x),\Psi_{r}^{-}(x))\right\} \right\}\\
 & =\sup_{v:Q_{\fast}v=Q_{\fast}\WM^TJ}\left\{Q_{\fast}v\cdot\sp-\sum_{r\in{\cal R}_{\slow}}S(J_{r}|\Psi_{r}^{+}(x),\Psi_{r}^{-}(x))\right\}.
 \end{align*}
 The right-hand side depends on $v$ only via  $Q_\fast v$. However, using the projection $\mathbb{P}(x)$, we decompose $v=(I-\mathbb{P}(x))v+\mathbb{P}(x)v$, where $(I-\mathbb{P}(x))v\in T_x\cal{E}_\fast$ and observe $Q_\fast v = Q_\fast(I-\mathbb{P}(x))v$. Hence, it suffices to compute the supremum only for $v\in T_x\cal{E}_\fast$. Computing further we get 
 \begin{align*}
 \left(L_{\mathrm{eff}}(x,\cdot)\right)^{*}(p=Q_{\fast}^{T}\sp) & =\sup_{J}\left\{ Q_{\fast}\WM^TJ\cdot\sp-\sum_{r\in{\cal R}_{\slow}}S(J_{r}|\Psi_{r}{{}^+}(x),\Psi_{r}{{}^-}(x))\right\} \\
 & =\sup_{J}\left\{ \sum_{r\in{\cal R}_{\slow}}J_{r}\left(\WM p\right)_{r}+\sum_{r\in{\cal R}_{\fast}}J_{r}\left(\WM p\right)_{r}-\sum_{r\in{\cal R}_{\slow}}S(J_{r}|\Psi_{r}^{+}(x),\Psi_{r}^{-}(x))\right\} \\
 & =\sum_{r\in{\cal R}_{\slow}}S_{\Psi_{r}^{+}(x),\Psi_{r}^{-}(x)}^{*}(\gamma_r\cdot  p)=H_{\mathrm{eff}}(x,p).
\end{align*}
This proves the claim.
\end{proof}

\subsection{Coarse-grained
Hamiltonian and Lagrangian}\label{ss:CGGL}

As we have seen the effective Lagrangian and Hamiltonian both contain  constraints. These constraint can be resolved by using coarse-grained
variables. For this, recall the reconstruction map $\sR$ from \eqref{eq:UFEC}. 

The effective Lagrangian $L_\eff$ is finite if evaluated at $x=\sR(\sq)\in\mathcal{E}_\fast$. Moreover for the constraint in the Hamiltonian, we observe that $p\in\Gamma_{\fast}^{\perp}$
is equivalent to $p\in\mathrm{range}(Q_{\fast}^{T})$, i.e.
there is $\sp\in\R^{m_{\fast}}$ such that $p=Q_{\fast}^{T}\sp$.
Hence, we have
\begin{align}\label{eq:CGHamiltonian}
H_{\eff}(x,p=Q_{\fast}^{T}\sp) & =H_{\eff}(\sR(\sq),Q_{\fast}^{T}\sp)=\sum_{r\in{\cal R}_{\slow}}S_{\Psi_{r}^{+}(\sR(\sq)),\Psi_{r}^{-}(\sR(\sq))}^{*}(\gamma_{r}\cdot Q_{\fast}^{T}\sp)=\nonumber\\
 & =\sum_{r\in{\cal R}_{\slow}}S_{\Psi_{r}^{+}(\sR(\sq)),\Psi_{r}^{-}(\sR(\sq))}^{*}(Q_{\fast}\gamma_{r}\cdot\sp)=:\sH(\sq,\sp),
\end{align}
where we call $\sH=\sH(\sq,\sp)$ as the coarse-grained Hamiltonian on $\R^{m_{\fast}}\times\R^{m_{\fast}}$.
This Hamiltonian corresponds to a RRE with reaction intesities $\Psi_{r}^{\pm}\circ\sR$
and stochiometric vectors $Q_{\fast}\gamma_{r}\in\R^{m_{\fast}}$, meaning that it contains linear stochiometric vectors, however mass-action law might be violated (as already observed in \cite{MielkePeletierStephan}). The induced RRE of that Hamiltonian is then the coarse-grained
evolution equation
\[
\dot{\sq}=Q_{\fast}R_{\slow}(\sR(\sq(t))),
\]
which we have seen from Proposition \ref{thm:EffectiveRRE}.

For $x=\sR(\sq)\in\cal{E}_\fast$, the coarse-grained Lagrangian is then defined
by the following contraction:
\begin{align}\label{s:CGLagrangian}
\sL(\sq,\sv) :=& \inf_{v:\sv=Q_\fast v}L_\eff(\sR(\sq),v)\\
=&   \inf_{\sv=\WM^T_\fast J}\sum_{r\in{\cal R}_{\slow}}S(J_{r}|\Psi_{r}^{+}(\sR(\sq)),\Psi_{r}^{-}(\sR(\sq))). 
\end{align}
Indeed, computing the Legendre transform of the right-hand side, we get

\begin{align*}
    \sup_{\sv}&\{\sv\cdot \sp - \inf_{v:Q_\fast v = \sv} L_\eff(x,v)\} = \sup_{\sv,v:Q_\fast v=\sv}\{ \sv\cdot\sp - L_\eff(x,v)\}\\
    &= \sup_{v} \{Q_\fast v\cdot\sp - L_\eff(x,v)\} = \sup_{v} \{v\cdot Q_\fast ^T\sp - L_\eff(x,v)\} = H_\eff(\sq,Q_\fast^T\sp)=\sH(\sq,\sp),
\end{align*}
which shows that the convex and continuous functions $\sH$  and $\sL$ are both Legendre duals.

\section{$\Gamma$-convergence result of fast-slow action functional ${\cal L}_{\eps}$}\label{s:GammaConvergence}
In this section we state and prove the $\Gamma$-convergence result of fast-slow action functional ${\cal L}_{\eps}$, which will later be also applied to identify the limit of the HJE.
   To do so, we consider the full domain  $\sC$ (which is convex and closed) and we fix a time-interval $[0,T]$,
$T>0$. Moreover, we are interested in trajectories $\x:[0,T]\mapsto \x(t)$ with final point $\x(T)=x\in\sC$, fixed
throughout the section. To fix the functional analytic setting, we introduce the function space
\[
\mathrm{AC}_x([0,T],\sC):=\{\x\in\mathrm{AC}([0,T],\sC):\x(T)=x\}.
\]
The $\eps$-dependent Lagrangian $L_\eps$ defines the fast-slow action functional by
\begin{equation}
  {\cal L}_{\eps}(\x)= \begin{cases}
   \int_{0}^{T}L_{\eps}(\x(t),\dot{\x}(t))\,\d t, \quad &\x \in \mathrm{AC}_x([0,T],\sC), \\
   +\8, &\text{ otherwise.}
  \end{cases}   
\end{equation}
Later we will rely on the flux representation of the $\L^1$-velocity $\dot{\x}(t)=\WM^T J= \sum_{r}J_r(t)\gamma_{r}$    represented by general measurable fluxes $J:[0,T]\to\R^R$.    We first define our notion of $\Gamma$-convergence. Note that we restrict the curves to be bounded in accordance to Section \ref{sec5}, where we consider a bounded domain $\Omega\subset\R^I$.   
\begin{defn}\label{Def:Mosco}
We say $\mathcal{L}_\eps$ converges to $\mathcal{L}_\eff$ in the sense of    (bounded $\L^1$-)Mosco-convergence  , denoted as ${\cal L}_{\eps}\xrightarrow{M}{\cal L}_{\mathrm{eff}}$, if
(i)  a liminf-estimate holds for all weakly-converging sequences: 
\[
 \forall \x^\eps\rightharpoonup\x \text{ in } \L^1([0,T],\sC), \quad     \sup_{\eps>0}\|\x^\eps\|_{\L^\infty([0,T])}<\infty:   \quad \liminf_{\eps\to0}\mathcal{L}_\eps(\x^\eps )\geq \mathcal{L}_\eff(\x);
\]
and (ii) there exists a recovery sequence converging in $\L^1([0,T],\sC)$:
\[
\forall \x\in   \L^1([0,T],\sC), \quad \exists \x^\eps\to\x \text{ in } \L^1([0,T],\sC),\,     \sup_{\eps>0}\|\x^\eps\|_{\L^\infty([0,T])} <\infty:  \quad \lim_{\eps\to 0}\mathcal{L}_\eps(\x^\eps) = \mathcal{L}_\eff(\x).
\]
\end{defn}
{   We remark that the convergence of the recovery sequence can be improved to $C([0,T],\sC)$ (see Proposition \ref{prop:ConstructionRecoverySequence}.)}
  
\begin{thm}[$\Gamma$-convergence of action functional]\label{thm:GammaConvergence}
     Assume that both conditions \eqref{FDB} and \eqref{eq:UFEC} hold. Then, we have the Mosco-convergence ${\cal L}_{\eps}\xrightarrow{M}{\cal L}_{\mathrm{eff}}$, as defined in Definition \ref{Def:Mosco},
where 
 \begin{equation}\label{eq:EffectiveAF}
{\cal L}_{\eff}(\x):=\begin{cases}
\int_{0}^{T}L_{\mathrm{eff}}(\x(t),\dot{\x}(t))\d t, & \x\in C([0,T],\sC),\ \ Q_{\fast}\x\in\W^{1,1}([0,T],\R^{m_{\fast}}),\   \x(t)\in\cE_\fast,\\
+\infty & \text{otherwise,}
\end{cases}
 \end{equation}
and the effective Lagrangian $L_\eff$ is defined by \eqref{eq:EffectiveLagrangian}. In particular, continuous curves $\x$ with finite action $\cal{L}_\eff(\x)<\infty$ satisfy $\x(t)\in\cE_\fast$ for all $t\in[0,T]$.
\end{thm}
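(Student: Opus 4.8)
The plan is to verify the two halves of Mosco-convergence in Definition \ref{Def:Mosco} — the liminf estimate (Proposition \ref{prop:liminf}) and the recovery sequence (Proposition \ref{prop:ConstructionRecoverySequence}) — together with the supporting compactness statement (Proposition \ref{prop:Compactness}). The final assertion, that continuous curves with $\mathcal{L}_{\eff}(\x)<\infty$ satisfy $\x(t)\in\cE_\fast$, is then immediate from the very definition of $\mathcal{L}_{\eff}$ in \eqref{eq:EffectiveAF}. Throughout, the engine is the flux representation $\dot\x=\WM^T J=\sum_r J_r\gamma_r$ together with the explicit bounds for the dual pair $(S,S^*)$ from Proposition \ref{prop:PropertiesS}. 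The crucial structural fact is that the fast block of $L_\eps$ carries a singular prefactor: for $r\in{\cal R}_\fast$, Proposition \ref{prop:PropertiesS}(2) gives $S\bigl(J_r\,\big|\,\tfrac{1}{\eps}\Psi_r^+(\x),\tfrac{1}{\eps}\Psi_r^-(\x)\bigr)\geq \tfrac{1}{\eps}\bigl(\sqrt{\Psi_r^+(\x)}-\sqrt{\Psi_r^-(\x)}\bigr)^2-\tfrac{1}{2}\log\!\bigl(\Psi_r^+(\x)/\Psi_r^-(\x)\bigr)J_r$, in which the $1/\eps$ cancels in the logarithmic ratio but survives in the squared-difference term. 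That term is a Fisher-type penalty vanishing exactly on $\cE_\fast$, and it is what pins the limit curves to the slow manifold.

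\textbf{Liminf estimate.} Given $\x^\eps\rightharpoonup\x$ in $\L^1$ with uniform $\L^\infty$-bound and $\liminf\mathcal{L}_\eps(\x^\eps)<\infty$, choose nearly optimal fluxes $J^\eps$ representing $\dot\x^\eps$. The superlinear bound $S(J|\alpha,\beta)\geq M{\cal C}(J/M)-2M$ of Proposition \ref{prop:PropertiesS}(3), applied to the slow block, yields equi-integrability of the slow fluxes and a weak $\L^1$-limit $J_\slow$. Since $\gamma_r\in\Gamma_\fast=\ker Q_\fast$ for $r\in{\cal R}_\fast$, applying $Q_\fast$ to the continuity equation kills the fast fluxes and gives $Q_\fast\dot\x^\eps=\WM_\fast^T J^\eps_\slow$; thus $Q_\fast\x^\eps$ is equicontinuous, so $Q_\fast\x^\eps\to Q_\fast\x$ in $C$ (Arzelà–Ascoli) and $Q_\fast\x\in\W^{1,1}$ with the effective constraint $Q_\fast\dot\x=\WM_\fast^T J_\slow$. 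To force $\x(t)\in\cE_\fast$, I control the troublesome linear term: by Lemma \ref{lem:FDB}(2) the fast logarithmic ratios assemble into $\tfrac12\,\D\mathcal{H}(\x^\eps|x_s^*)\cdot\sum_{r\in{\cal R}_\fast}J^\eps_r\gamma_r$, whose time integral telescopes, through the chain rule for the relative entropy $\mathcal{H}(\cdot|x_s^*)$, into boundary values of $\mathcal{H}$ plus a slow contribution controlled by $J^\eps_\slow$, hence is bounded independently of $\eps$. Bounded action then forces $\tfrac{1}{\eps}\int_0^T\sum_{r\in{\cal R}_\fast}\bigl(\sqrt{\Psi_r^+(\x^\eps)}-\sqrt{\Psi_r^-(\x^\eps)}\bigr)^2\,\d t=O(1)$, so $\mathrm{dist}(\x^\eps(t),\cE_\fast)\to0$ and, by continuity of $\sR$, $\x^\eps\to\sR(Q_\fast\x)=\x$ strongly with $\x(t)\in\cE_\fast$. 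With strong state convergence and weak flux convergence, Ioffe-type lower semicontinuity (convexity of $S$ in $J$, continuity of $\Psi_r^\pm$ in $x$) yields $\liminf\mathcal{L}_\eps(\x^\eps)\geq\int_0^T\sum_{r\in{\cal R}_\slow}S\bigl(J_r\,\big|\,\Psi_r^+(\x),\Psi_r^-(\x)\bigr)\,\d t\geq\mathcal{L}_{\eff}(\x)$, the last inequality being the definition of $L_\eff$ as the infimum over slow fluxes subject to $Q_\fast\dot\x=\WM_\fast^T J_\slow$.

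\textbf{Recovery sequence.} For $\x$ in the domain of $\mathcal{L}_{\eff}$ (continuous, $Q_\fast\x\in\W^{1,1}$, $\x(t)\in\cE_\fast$), the candidate is essentially $\x$ itself, or, when the trajectory touches $\partial\sC$, its interior perturbation $\x^\eps=\sR(Q_\fast\x+\theta_\eps\bar\sq)$ with $\theta_\eps\downarrow0$, using \eqref{eq:PositivityR}; that $\x=\sR(Q_\fast\x)$ is itself $\W^{1,1}$ follows from the analyticity of $\sR$ in Proposition \ref{prop:PropertiesReconstructionAndProjection}. The point is that on $\cE_\fast$ the fast intensities coincide, so the singular penalty above is zero, while a fixed flux through a fast reaction now costs $S\bigl(J_r\,\big|\,\tfrac{a}{\eps},\tfrac{a}{\eps}\bigr)=\tfrac{a}{\eps}{\cal C}(\eps J_r/a)=O(\eps)\to0$. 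Decompose $\dot\x=\sum_{r\in{\cal R}_\slow}J_r\gamma_r+\bigl(\dot\x-\sum_{r\in{\cal R}_\slow}J_r\gamma_r\bigr)$ with $J_\slow$ optimal for $L_\eff$; applying $Q_\fast$ and the constraint shows the remainder lies in $\ker Q_\fast=\Gamma_\fast$, hence is realizable by fast fluxes of vanishing cost. Thus $\mathcal{L}_\eps(\x^\eps)\to\int_0^T\sum_{r\in{\cal R}_\slow}S\bigl(J_r\,\big|\,\Psi_r^+(\x),\Psi_r^-(\x)\bigr)\,\d t=\mathcal{L}_{\eff}(\x)$, and the perturbation upgrades the convergence to $C([0,T],\sC)$. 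The compactness proposition uses the same superlinear-growth and $Q_\fast$-control estimates together with continuity of $\sR$ to identify precompactness on $\cE_\fast$.

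\textbf{Main obstacle.} The delicate part is the boundary $\partial\sC$: there both $\D\mathcal{H}(\cdot|x_s^*)=\log(\cdot/x_s^*)$ and the ratios $\log(\Psi_r^+/\Psi_r^-)$ blow up, the entropic chain-rule cancellation used in the liminf loses its a priori bound, and $S(J|\alpha,\beta)$ degenerates as $\alpha,\beta\to0$ while $\sR$ loses regularity. This is precisely why the positivity assumption \eqref{eq:PositivityR} is invoked — to push the recovery trajectories uniformly into $\mathrm{int}\,\sC$ with controlled extra action and to keep the free-energy estimates finite; the careful construction is exactly the content of Proposition \ref{prop:ConstructionRecoverySequence}. (For the Hamilton–Jacobi application in Section \ref{sec5} the working domain $\Omega$ has positive distance to $\partial\R_+^I$, so this difficulty is absent.)
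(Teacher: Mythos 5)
Your proposal is correct and follows essentially the same route as the paper: compactness and localization to $\cE_\fast$ via the Orlicz/superlinear bound on the slow fluxes, the coarse-grained continuity equation under $Q_\fast$, and the entropy chain-rule cancellation of the fast logarithmic term leaving the $\tfrac{1}{\eps}$-weighted Fisher-type penalty; the liminf via Ioffe's theorem after contracting over slow fluxes; and the recovery sequence via the (essentially constant) reconstructed curve with fast fluxes supplied by the projection $\mathbb{P}$ at vanishing cost, with the boundary handled by the positivity shift \eqref{eq:PositivityR}. The only points you gloss over — the integrability of the reconstructed fast fluxes (via boundedness of the map $\sJ_r\mapsto J_r$) and the precise bound on the slow term $\int\sum_{r\in\cal{R}_\slow}J_r^\eps\gamma_r\cdot\log(\x^\eps/x_s^*)$ via Lemma \ref{lem:EstimateCompactness} — are details the paper fills in along exactly the lines you indicate.
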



   Note that although the topology is given by $\L^1([0,T],\sC)$ the functionals $\mathcal{L}_\eps$ and $\mathcal{L}_\eff$ are only finite on continuous functions such that the point evaluation  $\x(T)=x$ makes sense.   
As usual the $\Gamma$-convergence is done in the following three steps: first showing
the necessary compactness, then the liminf-estimate, finally constructing the
recovery sequence.

\begin{prop}[Compactness]
\label{prop:Compactness}Let a sequence $\left(\x^{\eps}\right)_{\eps>0}$,
$\x^{\eps}\in\mathrm{AC}_x([0,T],\sC)$ satisfying ${\cal L}_{\eps}(\x^{\eps})\leq C$    and $|\x^\eps(t)|\leq C$   for some constant $C>0$
be given. 
Moreover, assume the \eqref{FDB} and the \eqref{eq:UFEC} conditions.
Then there exists a function $\x^{*}\in\L^{1}([0,T],\sC)$ with the following properties:
\begin{enumerate}
    \item $\left(Q_{\fast}\x^*\right)\in\W^{1,1}([0,T],\R^{m_{\fast}})$ and $\x^{*}(t)\in\cE_\fast$ for a.e. $t\in[0,T]$;
    \item $\x^{\eps}\to \x^{*}$ in any $\L^{p}([0,T],\R^{I})$, $p\in[1,\infty[$;
    \item $\x^{*}(t)=\sR(Q_{\fast}\x^{*}(t))$
for a.e. $t\in[0,T]$, which implies that $\x^{*}\in \W^{1,1}([0,T],\R^{m_{\fast}})$, i.e. $\x$ has an absolutely continuous representative.
\end{enumerate}
\end{prop}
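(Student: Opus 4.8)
The plan is to extract compactness from the uniform bound $\mathcal{L}_\eps(\x^\eps)\leq C$ by exploiting the coercivity lower bound on the Lagrangian together with the fact that the $1/\eps$-scaled fast part forces the limiting trajectory onto $\cE_\fast$. I would proceed in three stages: first establish weak compactness of the slow (coarse-grained) variables $Q_\fast\x^\eps$, then upgrade this to strong $\L^p$ compactness of the full trajectories $\x^\eps$ using the fast-equilibrium penalization, and finally identify the limit via the reconstruction map $\sR$.

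First I would use the lower bound from Proposition \ref{prop:PropertiesS}(3) together with the structure of $L_\eps$ in \eqref{eq:Leps}. Writing $\dot{\x}^\eps = \WM^T J^\eps$ and applying the superlinear bound $S(J_r|\alpha,\beta)\geq M\mathcal{C}(J_r/M)-2M$ (valid since the intensities $\Psi_r^\pm$ are bounded on the compact region $|\x^\eps(t)|\leq C$), the bound $\mathcal{L}_\eps(\x^\eps)\leq C$ yields a uniform bound on $\int_0^T \mathcal{C}(J_r^\eps/M)\,\d t$ for the slow fluxes. Since $\mathcal{C}$ is superlinear, this gives uniform integrability of the slow fluxes and hence equi-integrability of $Q_\fast\dot{\x}^\eps = \WM_\fast J^\eps_{\mathrm{slow}}$ (the fast contributions drop out of $Q_\fast$ because $\mathrm{ker}\,Q_\fast=\Gamma_\fast$). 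By the Dunford--Pettis theorem, $Q_\fast\dot{\x}^\eps$ is weakly compact in $\L^1$, so along a subsequence $Q_\fast\x^\eps \rightharpoonup \sq$ in $\W^{1,1}([0,T],\R^{m_\fast})$, giving claim (1)'s regularity statement for $Q_\fast\x^*$.

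Next I would control the fast directions. Here the key is that the $\frac{1}{\eps}$-prefactor on the fast reactions forces the \emph{relative-entropy dissipation} of the fast part to vanish. Using the lower bound $S(J|\alpha,\beta)\geq -J\frac12\log(\alpha/\beta)+(\sqrt\alpha-\sqrt\beta)^2$ from Proposition \ref{prop:PropertiesS}(2) applied to the fast reactions and the rewriting in Lemma \ref{lem:FDB}(2), the fast part of $\mathcal{L}_\eps$ controls $\frac{1}{\eps}\int_0^T \sum_{r\in\cal R_\fast}(\sqrt{\Psi_r^+}-\sqrt{\Psi_r^-})^2\,\d t$ up to a term that is absorbed by the slow-flux bound. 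Since this is bounded by $C$, the integrand must vanish in the limit, forcing $\Psi_r^+(\x^*(t))=\Psi_r^-(\x^*(t))$ for all fast $r$, i.e. $\x^*(t)\in\cE_\fast$ a.e. With $Q_\fast\x^*=\sq$ and $\x^*\in\cE_\fast$, the \eqref{eq:UFEC} condition uniquely identifies $\x^*(t)=\sR(Q_\fast\x^*(t))=\sR(\sq(t))$, which by continuity of $\sR$ (Proposition \ref{prop:PropertiesReconstructionAndProjection}(1)) and $\sq\in\W^{1,1}$ gives the absolutely continuous representative of claim (3), and since $\sR$ is Lipschitz on the relevant compact set, $\x^*\in\W^{1,1}$ as well. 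Strong $\L^p$ convergence in claim (2) then follows: weak $\L^1$ convergence of $Q_\fast\x^\eps$ plus the pointwise constraint collapsing onto the graph of $\sR$ upgrades to strong convergence via a Helly/Arzel\`a--Ascoli argument on the bounded, now equicontinuous coarse-grained curves, with boundedness $|\x^\eps|\leq C$ supplying $\L^p$ domination.

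The main obstacle I anticipate is the \emph{decoupling of fast and slow contributions} in the lower bound, precisely because the continuity equation $\dot{\x}^\eps=\WM^T J^\eps$ mixes fast and slow fluxes through the shared stoichiometry, and the infimum defining $L_\eps$ does not split cleanly. One cannot simply assert that the slow fluxes and fast fluxes are separately bounded; the cross term $-J_r\frac12\log(\alpha/\beta)$ in the fast part could a priori be negative and large. The careful step is to show that the fast dissipation term dominates, so that any attempt to make the fast fluxes large is penalized at order $1/\eps$, while the projection $Q_\fast$ annihilates the fast stoichiometry and leaves only the genuinely slow, equi-integrable motion. Making this quantitative — choosing the splitting $J^\eps = J^\eps_\slow \oplus J^\eps_\fast$ and estimating the fast entropy-dissipation against the total action uniformly in $\eps$ — is where the real work lies, and it is exactly the place where both \eqref{FDB} (to have a reference equilibrium $x_s^*$ and the entropy identity) and \eqref{eq:UFEC} (to uniquely reconstruct the limit) are indispensable.
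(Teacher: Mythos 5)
Your overall architecture matches the paper's: Orlicz-space bounds on the slow fluxes via Proposition \ref{prop:PropertiesS}(3), compactness of $Q_\fast\x^\eps$ because $\ker Q_\fast=\Gamma_\fast$ annihilates the fast stoichiometry, the square-root dissipation lower bound to force $\x^*\in\cE_\fast$, and \eqref{eq:UFEC} plus $\sR$ to identify the limit. But there is a genuine gap at the central estimate, and you essentially flag it yourself. You claim the fast part of $\mathcal{L}_\eps$ controls $\frac1\eps\int_0^T\sum_{r\in\mathcal{R}_\fast}(\sqrt{\Psi_r^+}-\sqrt{\Psi_r^-})^2\,\d t$ ``up to a term that is absorbed by the slow-flux bound.'' That term is $-\tfrac12\int_0^T\sum_{r\in\mathcal{R}_\fast}J_r^\eps\,\gamma_r\cdot\log(\x^\eps/x_s^*)\,\d t$ and it involves the \emph{fast} fluxes, which are not equi-integrable: for a fast reaction the Orlicz bound reads $\frac{M}{\eps}\mathcal{C}(\eps J_r^\eps/M)\le C+\frac{2M}{\eps}$ and only yields $|J_r^\eps|\lesssim M/\eps$, so this cross term can a priori be of order $1/\eps$ and cannot be absorbed by any bound on the slow fluxes. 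Your closing paragraph then states that making this step quantitative ``is where the real work lies,'' which is precisely the missing idea rather than a resolved obstacle.

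The paper's resolution is a structural identity, not an absorption argument: by the continuity equation, $\sum_{r\in\mathcal{R}_\fast}J_r^\eps\gamma_r=\dot{\x}^\eps-\sum_{r\in\mathcal{R}_\slow}J_r^\eps\gamma_r$, so the dangerous term splits into $-\int_0^T\dot{\x}^\eps\cdot\log(\x^\eps/x_s^*)\,\d t$, which equals $-\bigl(\mathcal{H}(\x^\eps(T)|x_s^*)-\mathcal{H}(\x^\eps(0)|x_s^*)\bigr)$ by the chain rule and is therefore uniformly bounded, plus a slow-flux term $\int_0^T\sum_{r\in\mathcal{R}_\slow}J_r^\eps\gamma_r\cdot\log(\x^\eps/x_s^*)\,\d t$, which is bounded pointwise by Lemma \ref{lem:EstimateCompactness} (evaluating the Legendre transform of $S$ at the momentum $p=\gamma_r\cdot\log(x/x_s^*)$) together with the a priori action bound. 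Without this step your Stage 2 does not close. Two smaller issues: (i) $\log(\x^\eps/x_s^*)$ is only defined for strictly positive trajectories, so you need the mollification-in-time step the paper performs before invoking Lemma \ref{lem:FDB}(2); (ii) concluding $\Psi_r^+(\x^*)=\Psi_r^-(\x^*)$ a.e.\ from the vanishing dissipation requires pointwise identification of the limit, which should be routed through the a.e.\ convergence of the dissipation integrand, the uniform convergence of $Q_\fast\x^\eps$, and the reconstruction map, rather than asserted directly from weak-$\L^1$ convergence.
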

   
Before we prove the proposition, we first need the following elementary estimate.
\begin{lem}\label{lem:EstimateCompactness}
    Let $x\in\sC$ be arbitrary, and let $\Psi^\pm(x)=k^\pm x^{\gamma^\pm}$ with $\gamma^\pm\in\N^I$ and $\gamma=\gamma^--\gamma^+$. Then for all $J\in\R$ it holds the pointwise estimate
    \begin{align}
        J \gamma\cdot \log (x/x_s^*) \leq  S(J|\Psi^+,\Psi^-) + k^+\bigg\{x^{\gamma^-}(x_s^*)^{-\gamma}-x^{\gamma^+} \bigg\} + k^- \bigg\{x^{\gamma^+}(x_s^*)^{\gamma}-x^{\gamma^-}\bigg\},
    \end{align}
    where the right-hand side is properly defined up to the boundary $x\in\partial \sC$.
\end{lem}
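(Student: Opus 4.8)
The plan is to read the claimed inequality as nothing more than the Fenchel--Young (Young) inequality for the Legendre-dual pair $(S, S_{\alpha,\beta}^{*})$ introduced before Proposition \ref{prop:PropertiesS}, evaluated at one carefully chosen momentum. Recall that $S(J|\alpha,\beta) = \sup_{p\in\R}\left(pJ - S_{\alpha,\beta}^{*}(p)\right)$, so that for every $p\in\R$ one has the elementary bound $pJ \leq S(J|\alpha,\beta) + S_{\alpha,\beta}^{*}(p)$. First I would specialize this to $\alpha = \Psi^+(x) = k^+ x^{\gamma^+}$ and $\beta = \Psi^-(x) = k^- x^{\gamma^-}$, and then make the single decisive choice $p = \gamma\cdot\log(x/x_s^*)$ (legitimate for $x\in\R_+^I$, where all logarithms are finite). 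With this choice the left-hand side $pJ$ is exactly the quantity $J\,\gamma\cdot\log(x/x_s^*)$ appearing in the statement, so the entire content reduces to identifying $S_{\alpha,\beta}^{*}(p)$ with the two correction monomials.

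Then I compute. Since $\e^{p} = \prod_i (x_i/x^*_{s,i})^{\gamma_i} = x^{\gamma}(x_s^*)^{-\gamma}$, I get $\alpha\e^{p} = k^+ x^{\gamma^+ + \gamma}(x_s^*)^{-\gamma}$, and the key algebraic cancellation $\gamma^+ + \gamma = \gamma^-$ collapses this to $k^+ x^{\gamma^-}(x_s^*)^{-\gamma}$; hence $\alpha(\e^{p}-1) = k^+\{x^{\gamma^-}(x_s^*)^{-\gamma} - x^{\gamma^+}\}$, which is precisely the first bracketed term. Symmetrically, using $\gamma^- - \gamma = \gamma^+$, the term $\beta(\e^{-p}-1)$ reduces to $k^-\{x^{\gamma^+}(x_s^*)^{\gamma} - x^{\gamma^-}\}$. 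Summing these two contributions gives $S_{\alpha,\beta}^{*}(p)$ in exactly the form claimed, so the Fenchel--Young inequality becomes the asserted estimate verbatim. It is worth noting that only positivity $x_s^*\in\R_+^I$ (so that $(x_s^*)^{\pm\gamma}$ is well defined) enters here; the detailed-balance identity is not used for the pointwise bound itself, although it is what makes this particular $p$ natural, cf. Lemma \ref{lem:FDB}(2).

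The one point requiring genuine care --- and the only real obstacle --- is the passage to the boundary $x\in\partial\sC$, where $\log(x/x_s^*)$ ceases to be finite. The plan here is to observe that the two correction brackets on the right-hand side, once written in the form above, are polynomials in $x$ with nonnegative integer exponents $\gamma^\pm\in\N^I$ multiplied by the fixed positive constants $k^\pm(x_s^*)^{\mp\gamma}$; they are therefore continuous on all of $\overline{\sC}$ with the convention $0^0=1$, which is exactly the sense in which the right-hand side is ``properly defined up to the boundary''. The inequality, established on the open orthant $\R_+^I$, then extends to $\partial\sC$ by a limiting argument: approximating a boundary point by interior points, the correction terms converge by continuity while $S(J|\Psi^+,\Psi^-)$ is lower semicontinuous (being a Legendre transform), so the estimate persists in the limit, both sides being allowed to take the value $+\infty$. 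I expect this boundary bookkeeping, rather than the one-line Fenchel--Young argument, to be where the writing effort actually goes.
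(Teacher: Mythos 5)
Your proposal is correct and follows essentially the same route as the paper: evaluate the Fenchel--Young inequality for the pair $(S, S^*_{\alpha,\beta})$ at the momentum $p=\gamma\cdot\log(x/x_s^*)$ with $\alpha=\Psi^+(x)$, $\beta=\Psi^-(x)$, and collapse the exponents via $\gamma^\pm\mp\gamma=\gamma^\mp$ to obtain the two bracketed correction terms. Your additional discussion of the boundary extension (continuity of the polynomial corrections, lower semicontinuity of $S$) is more explicit than the paper, which leaves that point implicit, but the core argument is identical.
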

\begin{proof}
    By evaluating the Legendre transform of $S$ at $p=\log\bigg( (x/x_s^*)^{\gamma^--\gamma^+} \bigg)$, we get that
    \[
    S(J|\Psi^+,\Psi^-)\geq  J \log\bigg( (x/x_s^*)^{\gamma^--\gamma^+} \bigg) - \Psi^+((x/x_s^*)^{\gamma^--\gamma^+}-1) - \Psi^-((x/x_s^*)^{\gamma^+-\gamma^-} -1).   
    \]
    Using that $\Psi^+(x)=k^+x^{\gamma^+}$, $\Psi^-(x)=k^-x^{\gamma^-}$ and $\gamma=\gamma^- -\gamma^+$, we hence get an inequality of the form

     \begin{align*}
    J \gamma\cdot \log (x/x_s^*)&\leq S(J|\Psi^+,\Psi^-) + k^+x^{\gamma^+} \bigg\{x^{\gamma^--\gamma^+}(x_s^*)^{-\gamma}-1\bigg\} + k^-x^{\gamma^-} \bigg\{x^{\gamma^+-\gamma^-}(x_s^*)^{\gamma}-1\bigg\} \\
    &\leq  S(J|\Psi^+,\Psi^-) + k^+\bigg\{x^{\gamma^-}(x_s^*)^{-\gamma}-x^{\gamma^+} \bigg\} + k^- \bigg\{x^{\gamma^+}(x_s^*)^{\gamma}-x^{\gamma^-}\bigg\}.
    \end{align*}
    
\end{proof}

\begin{proof}[Proof of Proposition \ref{prop:Compactness}]
The proof is performed in several steps. To derive the time regularity,
we use the explicit formula $L_{\eps}(x,v)=\inf_{v=\WM^{T}J}\sum_{r\in{\cal R}}S(J_{r}|\Psi_{r}^{+}(x),\Psi_{r}^{-}(x))$. Hence, we may assume the existence of fluxes $J_{r}^{\eps}:[0,T]\to\R$ that
are measurable, such that $\dot{\x}^{\eps}(t)=\WM^T J^{\eps}(t)$
holds weakly in time and we have     by the non-negativity of $S$ that
\begin{equation}\label{eq:AprioriCompactness}
\forall r\in\mathcal{R}:\quad  \int_{0}^{T}S\big(J_{r}^{\eps}(t)|\Psi^{+}_{r,\eps }(\x^{\eps}(t)),\Psi^{-}_{r,\eps }(\x^{\eps}(t))\big)\d t\leq C.    
\end{equation}
  
{\bf Step 1:} Since by assumptions, the sequence $\x^\eps$ is uniformly bounded, we    have a limit  $\x^{*}\in\L^{\infty}([0,T],\sC)$ such that there exists a subsequence (not relabeled) $\x^{\eps}\overset{*}{\rightharpoonup} \x^{*}$ in $\L^\infty([0,T],\sC)$   as $\eps\to0$.    Note that a priori the limit $\x^*$ has no regularity.    \\
{\bf Step 2:} Compactness of slow fluxes:\\
Let $r\in{\cal R}_{\slow}$, and let $M:=\max\left\{ \Psi_{r}^{\pm}(x):r\in{\cal R}_{\slow},x\leq C\right\} <\infty$.
By Proposition \ref{prop:PropertiesS}, we have $C\geq\int_{0}^{T}\left(M{\cal C}\left(\frac{J_{r}^{\eps}}{M}\right)-2M\right)\d t$
which implies that $\left\{ \frac{J_{r}^{\eps}}{M}\right\} _{\eps>0}$
is uniformly bounded in the Orlicz space $\L^{{\cal C}}([0,T])$. Because
it is a Banach space we also get that 
\begin{align}\label{eq:FluxesInLC}
    J_{r}^{\eps} \text{ is uniformly bounded in }\L^{{\cal C}}([0,T]),
\end{align}
and hence we get weak compactness 
\begin{align}\label{eq:CompactnessFluxes}
    J_{r}^{\eps}\rightharpoonup J_r^*\quad \text{ in }\L^{1}([0,T]) \text{ for all } r\in\cal{R}_\slow. 
\end{align}
{\bf Step 3:} By the continuity equation, we get 
\[
Q_{\fast}\dot{\x}^{\eps}=Q_{\fast}\sum_{r\in{\cal R}}J_{r}^{\eps}\gamma_{r}=Q_{\fast}\sum_{r\in{\cal R}_{\slow}}J_{r}^{\eps}\gamma_{r},
\]
because $\Gamma_{\fast}=\mathrm{ker}Q_{\fast}$. Hence, we conclude
that $\left(Q_{\fast}\dot{\x}^{\eps}\right)\in\L^{{\cal C}}([0,T],\R^{m_{\fast}})$,
which implies by the Arzela-Ascoli theorem uniform convergence on $[0,T]$ of $Q_{\fast}\x^{\eps}=:\sq^{\eps}$
 to a continuous curve $\sq^{*}:=Q_{\fast}\x^{*}$. Here we have used that the terminal point $\x(T)$ is fixed. In particular, we also have strong convergence $Q_{\fast}\x^{\eps}\to \sq^*$ in $\L^1([0,T],\R^{m_\fast})$.\\
{\bf Step 4:} Next, we show that $\x^{*}(t)\in\mathcal{E}_\fast$ for almost all $t\in[0,T]$.      
For this we exploit the specific form of $S$ and rely on \eqref{FDB}. First, we observe that by defining $\tilde\x^\eps(t)=(\x^\eps\ast\psi^\eps)(t)$, where $\psi^\eps$ is a mollifier in time (i.e. on $[0,T]\subset \R$) approximating the identity as $\eps\to0$, the same a priori estimate \eqref{eq:AprioriCompactness} hold true for $\tilde \x^\eps$, and thus we may assume that $\x^\eps$ is strictly positive for any $\eps>0$ (but of course not uniformly in $\eps$). Using \eqref{eq:lb} and Proposition
\ref{prop:PropertiesS}, we have for all $r\in{\cal R}_{\fast}$ a
bound on 
\begin{align*}
 & \int_{0}^{T}S(J_{r}^{\eps}|\Psi^{+}_{r,\eps}(\x^{\eps}),\Psi^{-}_{r,\eps}(\x^{\eps}))\d t\\
 & \geq\int_{0}^{T}-J_{r}^{\eps}\frac{1}{2}\log(\Psi^{+}_{r,\eps}(\x^{\eps})/\Psi^{-}_{r,\eps}(\x^{\eps}))+\frac 1 \eps \left(\sqrt{\Psi^{+}_{r,\eps}(\x^{\eps})}-\sqrt{\Psi^{-}_{r,\eps}(\x^{\eps})}\right)^{2}\d t\\
 & =\int_{0}^{T}\frac{J_{r}^{\eps}}{2}\gamma_{r}\cdot\log(\x^{\eps}/x_{s}^{*})+\frac{1}{\eps}\left(\sqrt{\Psi^{+}_{r,\eps}(\x^{\eps})}-\sqrt{\Psi^{-}_{r,\eps}(\x^{\eps})}\right)^{2}\d t,
\end{align*}
where we have used the \eqref{FDB} and Lemma \ref{lem:FDB} in the last step.
   
Summing
over all $r\in{\cal R}_{\fast}$, we get the uniform bound
\begin{align*}
&\frac{1}{2}\int_{0}^{T}\sum_{r\in{\cal R}_{\fast}}J_{r}^{\eps}\gamma_{r}\cdot\log(\x^{\eps}/x_{s}^{*})\d t+\frac{1}{\eps}\sum_{r\in{\cal R}_{\fast}}\int_{0}^{T}\left(\sqrt{\Psi_{r}^{+}(\x^{\eps})}-\sqrt{\Psi_{r}^{-}(\x^{\eps})}\right)^{2}\d t\\
\leq& \sum_{r\in\mathcal{R}_\fast}\int_{0}^{T}S(J_{r}^{\eps}|\Psi^{+}_{r,\eps}(\x^{\eps}),\Psi^{-}_{r,\eps}(\x^{\eps}))\d t,
\end{align*}
or, equivalently, that 
\begin{align*}
&
\frac{1}{\eps}\sum_{r\in{\cal R}_{\fast}}\int_{0}^{T}\left(\sqrt{\Psi_{r}^{+}(\x^{\eps})}-\sqrt{\Psi_{r}^{-}(\x^{\eps})}\right)^{2}\d t\\
\leq& \sum_{r\in\mathcal{R}_\fast}\int_{0}^{T}S(J_{r}^{\eps}|\Psi^{+}_{r,\eps}(\x^{\eps}),\Psi^{-}_{r,\eps}(\x^{\eps}))\d t - \frac{1}{2}\int_{0}^{T}\sum_{r\in{\cal R}_{\fast}}J_{r}^{\eps}\gamma_{r}\cdot\log(\x^{\eps}/x_{s}^{*})\d t.
\end{align*}
We are now deriving an $\eps$-uniform bound for the right-hand side. Indeed, the bound on the first term follows from the a priori bound on the action functional \eqref{eq:AprioriCompactness}. For the second term, we
use the continuity equation, and get
\begin{align*}
-\int_{0}^{T}\sum_{r\in{\cal R}_{\fast}}J_{r}^{\eps}\gamma_{r}\cdot\log(\x^{\eps}/x_{s}^{*})\d t & =-\int_{0}^{T}\left(\dot{\x}^{\eps}-\sum_{r\in{\cal R}_{\slow}}J_{r}^{\eps}\gamma_{r}\right)\cdot\log(\x^{\eps}/x_{s}^{*})\d t\\
 & =-\int_{0}^{T}\dot{\x}^{\eps}\cdot\log(\x^{\eps}/x_{s}^{*})\d t+\int_{0}^{T}\sum_{r\in{\cal R}_{\slow}}J_{r}^{\eps}\gamma_{r}\cdot\log(\x^{\eps}/x_{s}^{*})\d t,
\end{align*}
which we both estimate as follows. We have
\[
\int_{0}^{T}\dot{\x}^{\eps}\cdot\log(\x^{\eps}/x_{s}^{*})\d t=\int_{0}^{T}\frac{\d}{\d t}{\cal H}(\x^{\eps}(t)|x_{s}^{*})\d t={\cal H}(\x^{\eps}(T)|x_{s}^{*})-{\cal H}(\x^{\eps}(0)|x_{s}^{*}),
\]
which is uniformly bounded in $\eps\to 0$ (also up to the boundary of $\sC$). Moreover, exploiting Lemma \ref{lem:EstimateCompactness}, we get the bound
\begin{align*}
&\int_{0}^{T}\sum_{r\in{\cal R}_{\slow}}J_{r}^{\eps}\gamma_{r}\cdot\log(\x^{\eps}/x_{s}^{*})\d t \\
\leq & \sum_{r\in\mathcal{R}_\slow}\int_0^T S(J_r^\eps|\Psi^+_r,\Psi^-_r) + k^+_r\bigg\{(\x^\eps)^{\gamma_r^-}(x_s^*)^{-\gamma_r}-(\x^\eps)^{\gamma_r^+} \bigg\} + k_r^- \bigg\{(\x^\eps)^{\gamma_r^+}(x_s^*)^{\gamma_r}-(\x^\eps)^{\gamma_r^-}\bigg\} \d t,
\end{align*}
whose right-hand side is again bounded by the a priori estimate \eqref{eq:AprioriCompactness} and the uniform bound $|\x^\eps(t)|\leq C$.\\
Hence, there exists a constant $\tilde C>0$ such that
\[
\frac{1}{\eps}\sum_{r\in{\cal R}_{\fast}}\int_{0}^{T}\left(\sqrt{\Psi_{r}^{+}(\x^{\eps})}-\sqrt{\Psi_{r}^{-}(\x^{\eps})}\right)^{2}\d t\leq \tilde C.
\]
Since the integrand
\[
F_\fast(\x^\eps(t)):=\sum_{r\in{\cal R}_{\fast}} \left(\sqrt{\Psi_{r}^{+}(\x^{\eps}(t))}-\sqrt{\Psi_{r}^{-}(\x^{\eps}(t))}\right)^{2}
\]
is non-negative, in the following denoted by $f_{\eps}=F_{\fast}(\x^{\eps}(t))$,
we conclude that the bounded sequence $f_{\eps}$ converges (up to
relabling) to zero a.e. in $[0,T]$. Since $F_{\fast}$ is continuous,
we conclude that $F_{\fast}(\x^{*})=0$ a.e., which means that   $\x^{*}(t)\in\mathcal{E}_\fast$
a.e. in $t\in[0,T]$.\\
{\bf Step 5:}    To get full compactness and the desired continuity for $\x^*$, we now exploit \eqref{eq:UFEC} and the reconstruction map $\sR$. From Proposition \ref{prop:PropertiesReconstructionAndProjection}, we know that the set of fast equilibria is characterized by the map $\sR:\sQ\to\cE_\fast$. Hence by continuity of $\sR$,
we get that $\x^{*}(t)=\sR(\sq^*(t))=\sR(Q_{\fast}\x^{*}(t))$ for a.e.
  $t\in[0,T]$. In particular, by Step 3, this also proves continuity of the limit trajectory $\x^*$,
and moreover, also strong convergence $\x^{\eps}\to \x^{*}$ in any
$\L^{p}([0,T],\R^{I})$, $p\in[1,\infty[$ by dominated convergence.
\end{proof}

\begin{prop}[Liminf estimate]\label{prop:liminf}
 Let $(\x^{\eps})_{\eps>0}$ be any sequence such that $\x^{\eps}\rightharpoonup \x^{*}$
in $\L^{1}([0,T],\sC)$ and $\sup_{\eps>0}\|\x_\eps(t)\|_{\L^\infty([0,T])}\leq C$ for some $C>0$. Then, we have the estimate $\liminf_{\eps\to0}{\cal L}_{\eps}(\x^{\eps})\geq{\cal L}_{\eff}(\x^{*})$.
\end{prop}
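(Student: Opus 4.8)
The plan is to follow the standard lower-bound recipe for $\Gamma$-convergence: reduce the full $\eps$-functional to its slow part, identify an admissible limiting flux through the contracted continuity equation, and then invoke convex lower semicontinuity. We may assume $\liminf_{\eps\to0}\mathcal{L}_\eps(\x^\eps)<\infty$, and pass to a subsequence (not relabeled) along which $\mathcal{L}_\eps(\x^\eps)$ converges to this finite liminf and is bounded by some $C$; in particular each $\x^\eps\in\mathrm{AC}_x([0,T],\sC)$, so the terminal condition $\x^\eps(T)=x$ is in force. Together with the $\L^\infty$-bound this places us exactly in the hypotheses of Proposition \ref{prop:Compactness}. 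Applying it, and using uniqueness of weak limits to identify the strong limit with $\x^*$, we obtain that $\x^*$ has a continuous representative with $Q_{\fast}\x^*\in\W^{1,1}([0,T],\R^{m_{\fast}})$, that $\x^*(t)\in\cE_\fast$ for a.e.\ $t$, that $\x^\eps\to\x^*$ strongly in every $\L^p([0,T],\R^I)$, $p<\infty$, and that the slow fluxes converge weakly, $J^\eps_r\rightharpoonup J^*_r$ in $\L^1([0,T])$ for all $r\in{\cal R}_\slow$. Here the $J^\eps_r$ are (measurably selected) minimizers in the definition of $L_\eps$, so that $\dot{\x}^\eps=\WM^T J^\eps$ and $\mathcal{L}_\eps(\x^\eps)=\int_0^T\sum_{r\in{\cal R}}S\big(J^\eps_r\,\big|\,\Psi^+_{r,\eps}(\x^\eps),\Psi^-_{r,\eps}(\x^\eps)\big)\,\d t$.

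Next I discard the fast reactions. Since $S\geq0$ by Proposition \ref{prop:PropertiesS}(1), and $\Psi^\pm_{r,\eps}=\Psi^\pm_r$ for $r\in{\cal R}_\slow$, we obtain
\[
\mathcal{L}_\eps(\x^\eps)\geq\int_0^T\sum_{r\in{\cal R}_\slow}S\big(J^\eps_r\,\big|\,\Psi^+_r(\x^\eps),\Psi^-_r(\x^\eps)\big)\,\d t.
\]
To identify the admissible limiting flux, I apply $Q_{\fast}$ to the continuity equation $\dot{\x}^\eps=\sum_{r}J^\eps_r\gamma_r$; since $\gamma_r\in\Gamma_\fast=\mathrm{ker}\,Q_{\fast}$ for $r\in{\cal R}_\fast$, only the slow fluxes survive, and passing to the weak $\L^1$-limit gives $Q_{\fast}\dot{\x}^*=\sum_{r\in{\cal R}_\slow}J^*_r\,Q_{\fast}\gamma_r=\WM^T_\fast J^*$ (using $\WM_\fast=\WM Q^T_\fast$, so that rows of $\WM^T_\fast$ indexed by fast reactions vanish). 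Thus $J^*=(J^*_r)_{r\in{\cal R}_\slow}$ is admissible in the infimum defining $L_\eff(\x^*(t),\dot{\x}^*(t))$ for a.e.\ $t$.

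The crux is the lower semicontinuity of the slow action under the strongly converging concentrations and weakly converging fluxes. The integrand $(x,J)\mapsto S\big(J\,\big|\,\Psi^+_r(x),\Psi^-_r(x)\big)$ is convex in $J$ (being a Legendre transform) and jointly lower semicontinuous, since $\Psi^\pm_r$ are monomials, hence continuous up to $\partial\sC$ even where they vanish; it is therefore a nonnegative normal integrand. Since $\x^\eps\to\x^*$ strongly and $J^\eps_r\rightharpoonup J^*_r$ weakly in $\L^1$, an Ioffe-type lower-semicontinuity theorem for integral functionals with integrands convex in the weakly converging variable yields
\[
\liminf_{\eps\to0}\int_0^T\sum_{r\in{\cal R}_\slow}S\big(J^\eps_r\,\big|\,\Psi^+_r(\x^\eps),\Psi^-_r(\x^\eps)\big)\,\d t\geq\int_0^T\sum_{r\in{\cal R}_\slow}S\big(J^*_r\,\big|\,\Psi^+_r(\x^*),\Psi^-_r(\x^*)\big)\,\d t.
\]
Combining with the admissibility of $J^*$, the definition of $L_\eff$ as the infimum over admissible fluxes, and $\x^*(t)\in\cE_\fast$ a.e.\ (so that $\mathcal{L}_\eff(\x^*)$ is given by its finite formula), the right-hand side dominates $\int_0^T L_\eff(\x^*(t),\dot{\x}^*(t))\,\d t=\mathcal{L}_\eff(\x^*)$, closing the chain of inequalities. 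I expect the \emph{main obstacle} to be this lower-semicontinuity step: one must verify that the integrand is a genuine normal integrand up to the boundary (where some $\Psi^\pm_r$ degenerate to zero) and that the weak-$\L^1$ convergence of fluxes, guaranteed by the uniform $\L^{{\cal C}}$-bound and the superlinear ${\cal C}$-type growth from Proposition \ref{prop:PropertiesS}(3), legitimately passes into the liminf together with the strong convergence of the concentrations feeding the intensities.
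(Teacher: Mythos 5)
Your proposal is correct and follows essentially the same route as the paper's proof: reduce to sequences with bounded action, invoke Proposition \ref{prop:Compactness} to get $\x^*(t)\in\cE_\fast$ and weak $\L^1$-convergence of the slow fluxes, drop the fast part of the action by nonnegativity of $S$, apply Ioffe's lower-semicontinuity theorem, and contract the limiting fluxes through $Q_\fast$ to reach $\mathcal{L}_\eff$. The only cosmetic difference is that the paper works with $\eps$-approximate minimizing fluxes (hence the slack $\mathcal{L}_\eps(\x^\eps)+\eps$ in its chain of inequalities) rather than exact measurable selections, which changes nothing in the limit.
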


\begin{proof}
W.l.o.g we may assume that the sequence $(\x_\eps)_{\eps>0}$ satisfies  ${\cal L}_{\eps}(\x^{\eps})\leq C$, since
otherwise the claim is trivial. By the compactness result Proposition
\ref{prop:Compactness}, we conclude that the limit satisfies $\x^{*}\in\cE_\fast$
for a.e. $t\in[0,T]$. Moreover, there are fluxes $J^{\eps}=\left(J_{r}^{\eps}\right)_{r\in{\cal R}}$
such that the continuity equation $\dot{\x}^{\eps}=\WM^T J^\eps$
is satisfied, and we have 
\begin{align*}
{\cal L}_{\eps}(\x^{\eps})+\eps & \geq\int_{0}^{T}\sum_{r\in{\cal R}_{\slow}}S(J_{r}^{\eps}|\Psi_{r}^{+},\Psi_{r}^{-})\d t+\int_{0}^{T}\sum_{r\in{\cal R}_{\fast}}S(J_{r}^{\eps}|\frac{\Psi_{r}^{+}}{\eps},\frac{\Psi_{r}^{-}}{\eps})\d t\\
 & \geq\int_{0}^{T}\sum_{r\in{\cal R}_{\slow}}S(J_{r}^{\eps}|\Psi_{r}^{+},\Psi_{r}^{-})\d t.
\end{align*}
Recall by \eqref{eq:CompactnessFluxes}, that for the slow fluxes
we have weak convergence of $J_{r}^{\eps}\rightharpoonup J_{r}^{*}$
in $\L^{1}([0,T])$. Moreover, $\Psi_{r}^{\pm}$ are bounded  and  continuous, and we have
strong convergence $\x^{\eps}\to \x^{*}$ in $\L^{p}([0,T],\sC)$. 
Also note that the function $(J,x)\mapsto S(J|\Psi{{}^+}(x),\Psi^{-}(x))$ is lower
semicontinuous as a Legendre transform. Hence, we get the liminf estimate
by Ioffe's theorem (see e.g.    \cite[Th. 7.5]{fonleo07}  ),
i.e.
\[
\liminf_{\eps\to 0}{\cal L}_{\eps}(\x^{\eps})\geq\int_{0}^{T}\sum_{r\in{\cal R}_{\slow}}S(J_{r}^{*}|\Psi_{r}^{+},\Psi_{r}^{-})\d t.
\]
Applying the coarse-graining map $Q_{\fast}$ to the continuity equation
and taking the limit in the continuity equation, we get that 
\begin{align}\label{eq:Coarse-grainedCE}
Q_{\fast}\dot{\x}^{*}=Q_{\fast}\WM^TJ^* = \sum_{r\in\mathcal{R}_\slow} J_r^* Q_\fast \gamma_r.    
\end{align}
Hence, we can contract over all these fluxes and obtain the desired
liminf-estimate involving $\mathcal{L}_\eff$.
\end{proof}

   Next, we prove the limsup-inequality for the $\Gamma$-convergence result. To handle also paths that might reach the boundary $\partial\sC$, we use the positivity assumption \eqref{eq:PositivityR}. Note that the theory for HJE in Section \ref{sec5} assumes that the underlying   domain $\Omega$ has   a positive distance to the boundary, so the assumption \eqref{eq:PositivityR} is indeed not necessary for our main theorem.   

\begin{prop}[Construction of recovery sequence]\label{prop:ConstructionRecoverySequence}
Let $\x^{*}\in\L^{1}([0,T],\sC)$ with $\x^*(t)\leq C$ for a.e. $t\in[0,T]$. {   Let assumption \eqref{eq:PositivityR} holds.}  Then, there exists a family $\left(\x^{\eps}\right)_{\eps>0}$
such that $\x^{\eps}\to \x^{*}$  in $C([0,T],\sC)$   (and hence also in $\L^1([0,T],\sC)$)   and
$\lim_{\eps\to0}{\cal L}_{\eps}(\x^{\eps})={\cal L}_{\eff}(\x^{*})$.
\end{prop}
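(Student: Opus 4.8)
The plan is to establish the limsup-inequality by exhibiting an explicit family of fluxes; combined with the liminf-estimate of Proposition \ref{prop:liminf}, this yields the desired equality $\lim_{\eps\to0}{\cal L}_\eps(\x^\eps)={\cal L}_\eff(\x^*)$. First I would reduce to the nontrivial case ${\cal L}_\eff(\x^*)<\infty$; otherwise one takes $\x^\eps=\x^*$ and the liminf-estimate forces ${\cal L}_\eps(\x^\eps)\to\infty={\cal L}_\eff(\x^*)$. In the finite case, the characterization in \eqref{eq:EffectiveAF} together with Proposition \ref{prop:Compactness} shows that $\x^*$ is continuous with $\sq^*:=Q_\fast\x^*\in\W^{1,1}([0,T],\R^{m_\fast})$, $\x^*(t)\in\cE_\fast$, and $\x^*(t)=\sR(\sq^*(t))$. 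Using that $H_\eff(x,\cdot)$ is finite only on $\Gamma_\fast^\perp$ (Lemma \ref{lem:HeffRepresentation}), one checks $L_\eff(\sR(\sq),v)=\sL(\sq,Q_\fast v)$, so that ${\cal L}_\eff(\x^*)=\int_0^T\sL(\sq^*(t),\dot\sq^*(t))\,\d t$ is expressed purely through coarse-grained quantities.

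The heart of the construction is the choice of fluxes. Let me first describe the interior situation, where $\x^*(t)$ stays uniformly bounded away from $\partial\sC$; here I simply take $\x^\eps=\x^*$. For the slow reactions I choose pointwise minimizers $J_r^*(t)$, $r\in{\cal R}_\slow$, realizing $L_\eff(\x^*,\dot\x^*)$ (a measurable, integrable selection by finiteness of the action), so that $\dot\sq^*=\sum_{r\in{\cal R}_\slow}J_r^*Q_\fast\gamma_r$. Since $Q_\fast(\dot\x^*-\sum_{r\in{\cal R}_\slow}J_r^*\gamma_r)=0$, the residual lies in $\Gamma_\fast=\ker Q_\fast$ and is thus generated by fast reaction vectors: there exist measurable, integrable fast fluxes $\bar J_r$ (a bounded linear selection) with $\dot\x^*=\sum_{r\in{\cal R}_\slow}J_r^*\gamma_r+\sum_{r\in{\cal R}_\fast}\bar J_r\gamma_r$. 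This flux pair bounds ${\cal L}_\eps(\x^*)$ from above: the slow part is exactly ${\cal L}_\eff(\x^*)$, while for the fast part I use $\x^*\in\cE_\fast$, so that $\Psi_r^+(\x^*)=\Psi_r^-(\x^*)=:a_r$ and $S(\bar J_r\mid a_r/\eps,a_r/\eps)=(a_r/\eps)\,{\cal C}(\eps\bar J_r/a_r)\leq \eps\bar J_r^2/(4a_r)$, where ${\cal C}(s)\leq s^2/4$ follows from $2(\cosh p-1)\geq p^2$ (Proposition \ref{prop:PropertiesS}). Since $a_r$ is bounded below on $[0,T]$, the fast action is $O(\eps)\to0$, giving $\limsup_{\eps\to0}{\cal L}_\eps(\x^*)\leq{\cal L}_\eff(\x^*)$.

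To treat trajectories touching $\partial\sC$, where $a_r$ may vanish and the above bound degenerates, I would invoke the positivity assumption \eqref{eq:PositivityR}. With the distinguished $\bar\sq\in\sQ$, set $\sq^*_\theta:=\sq^*+\theta\bar\sq$ and $\x^*_\theta:=\sR(\sq^*_\theta)$. By \eqref{eq:PositivityR} and the monotonicity it encodes, $\x^*_\theta$ is strictly positive, satisfies $\x^*\leq\x^*_\theta\leq\x^*_1$ componentwise, lies in $\cE_\fast$, and converges uniformly to $\x^*$ as $\theta\to0$ by continuity of $\sR$; moreover $\dot\sq^*_\theta=\dot\sq^*$. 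On each interior curve $\x^*_\theta$ I repeat the previous construction, obtaining fast intensities bounded below by some $\delta(\theta)>0$ and a fast action controlled by $C\eps/\delta(\theta)$. Choosing a diagonal sequence $\theta=\theta(\eps)\to0$ slowly enough that $\eps/\delta(\theta(\eps))\to0$, the fast action still vanishes while $\x^\eps:=\x^*_{\theta(\eps)}\to\x^*$ in $C([0,T],\sC)$. For the slow part, the uniform bounds $\x^*\leq\x^*_\theta\leq\x^*_1$ control the intensities $\Psi_r^\pm(\x^*_\theta)$, and continuity of $\sL$ in its first argument together with dominated convergence yields $\int_0^T\sL(\sq^*_\theta,\dot\sq^*)\,\d t\to\int_0^T\sL(\sq^*,\dot\sq^*)\,\d t={\cal L}_\eff(\x^*)$, so again $\limsup_{\eps\to0}{\cal L}_\eps(\x^\eps)\leq{\cal L}_\eff(\x^*)$.

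Combining this limsup bound with the liminf estimate of Proposition \ref{prop:liminf} gives $\lim_{\eps\to0}{\cal L}_\eps(\x^\eps)={\cal L}_\eff(\x^*)$, completing the recovery sequence. I expect the main obstacle to be the boundary regularization: one must simultaneously (i) keep the fast intensities bounded below to kill the $\eps^{-1}$-singular fast action, (ii) let $\theta(\eps)\to0$ fast enough to recover uniform convergence to $\x^*$, and (iii) secure convergence of the slow action, which hinges on an integrable domination for $\sL(\sq^*_\theta,\dot\sq^*)$ uniform in $\theta$ -- precisely the role of the monotonicity in \eqref{eq:PositivityR}. The calibration of the single scale $\theta(\eps)$ against these competing demands, together with the measurable and integrable selection of the fast fluxes $\bar J_r^\theta$, is the technically delicate step.
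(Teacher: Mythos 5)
Your overall strategy is the one the paper follows: reduce to finite effective action, use the positivity assumption \eqref{eq:PositivityR} to shift the trajectory to $\x_\theta=\sR(\sq^*+\theta\bar\sq)$ away from $\partial\sC$, take the (essentially constant) sequence $\x^\eps=\sR(\sq^*)$ in the positive case, reconstruct the fast fluxes from the residual $\dot\x^*-\sum_{r\in\mathcal{R}_\slow}J_r^*\gamma_r\in\Gamma_\fast$ via a bounded linear map (the paper does this explicitly with the projection $\mathbb{P}$ from Proposition \ref{prop:PropertiesReconstructionAndProjection}), and show the fast part of the action is asymptotically negligible. However, one step as written would fail. You bound the fast action by $S(\bar J_r\mid a_r/\eps,a_r/\eps)=(a_r/\eps)\,\mathcal{C}(\eps\bar J_r/a_r)\leq\eps\bar J_r^2/(4a_r)$ using $\mathcal{C}(s)\leq s^2/4$, and then need $\int_0^T\bar J_r^2\,\d t<\infty$. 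But the finite action only puts the slow fluxes (and hence their bounded linear images, the fast fluxes) in the Orlicz space $\L^{\mathcal{C}}([0,T])$, i.e.\ $L\log L$, which is strictly larger than $\L^2$ (e.g.\ $J(t)=t^{-1/2}$ lies in $L\log L\setminus\L^2$). So your quadratic majorant can be identically $+\infty$ and the bound gives nothing. The paper instead uses the matching upper bound $\mathcal{C}(s)\leq2|s|\log(|s|+1)$, which yields the $\eps$-independent integrable dominating function $4\,\mathcal{C}(J_r(t))$ together with pointwise convergence to zero, and concludes by dominated convergence.

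A second, smaller issue is the slow action under the shift: you invoke dominated convergence with a $\theta$-uniform integrable majorant for $\sL(\sq^*_\theta,\dot\sq^*)$, attributed to the monotonicity in \eqref{eq:PositivityR}. This is not immediate, because $S(J\mid\alpha,\beta)$ is neither monotone in $(\alpha,\beta)$ nor continuous as the intensities degenerate at $\partial\sC$, so no obvious majorant is available. The paper proves only the one-sided inequality $\limsup_{\delta\to0}\int_0^TS(J_r\mid\Psi_r^\pm(\x^\delta))\,\d t\leq\int_0^TS(J_r\mid\Psi_r^\pm(\x^*))\,\d t$, which is all that is needed, via the inf-convolution representation $S(J\mid g,h)=\inf_{J=u-w}\mathcal{H}(u\mid g)+\mathcal{H}(w\mid h)$ applied to near-optimal decompositions $J=u^n-w^n$ and monotone convergence in $\delta$. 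You correctly identified this as the delicate step, but the mechanism you propose is not the one that closes it.
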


\begin{proof}
The proof is done in several steps. \\
{\bf Step 0:} First, we may assume that ${\cal L}_{\eff}(\x^{*})<\infty$, because
if ${\cal L}_{\eff}(\x^{*})=\infty$ we take the constant sequence
$\x^{\eps}=\x^{*}$ and the liminf provides that also ${\cal L}_{\eps}(\x^{\eps})\to\infty$.
Hence, we may assume that $\x^{*}\in \mathrm{AC}_x([0,T],\sC)$, $\sq:=Q_{\fast}\x^{*}\in\W^{1,1}([0,T],\R^{m_{\fast}})$
and that $\x^{*}(t)\in\cE_\fast$ for all $t\in[0,T]$.    In particular, we have that $\x^*$ is bounded.  Moreover,   the bound on the action functional provides the existence of general fluxes $\sJ_r\in \L^1([0,T])$  such that the coarse-grained continuity equation of the form $\dot{\sq}=\sum_{r\in{\cal R}_{\slow}}\sJ_{r}Q_{\fast}\gamma_{r}$ (see \eqref{eq:Coarse-grainedCE}) is satisfied.    \\
The aim is to construct a continuity equation for the whole system
$\dot{\x}^{\eps}=\sum_{r}J_{r}\gamma_{r}$, i.e. to find fluxes $J=(J_{r})_{r}$
and show that this sequence provides ${\cal L}_{\eps}(\x^{\eps})\to{\cal L}_{\eff}(\x^{*})$.    We do this in two steps: First, we handle the problems arising through the boundary  of $\sC$ by shifting the concentrations; secondly, we show that for positive concentrations the constant sequence provides a recovery sequence.    \\
{\bf Step 1:}    We first show that for a suitable shift $\x^\delta\geq 0$ with $\x^\delta\to\x^*$ we have for all slow reactions $r\in\mathcal{R}_\slow$ the limsup-inequality
\[
\limsup_{\delta\to0}\int_0^TS(J^r|\Psi^+_r(\x^\delta(t)),\Psi^+_r(\x^\delta(t)))\d t \leq\int_0^TS(J^r|\Psi^+_r(\x(t)),\Psi^+_r(\x(t)))\d t.
\]
In order to do so, we have to ensure, that $\x^\delta\in\mathcal{E}_\fast$ and that the continuity equation $\dot{\sq}=\sum_{r\in{\cal R}_{\slow}}\sJ_{r}Q_{\fast}\gamma_{r} $ is satisfied. For this, we use the positivity assumption \eqref{eq:PositivityR}. Hence, there exists $\bar \sq$ such that $\sR(\sq+\delta \bar \sq)_i>0$ and that $\sR(\sq+\delta\bar\sq)_i\geq \sR(\sq)_i$ on $[0,T]$. We define $\x^\delta:=\sR(\sq+\delta\bar\sq)$ and we have that $\x^\delta\to \x^*$ monotonously. Note that the fluxes are not changed, because the derivative of $\sq$ is independent of the shift.\\
From the monotone convergence $\x^\delta\to \x^*$ the desired convergence follows by a standard dualization argument regardless the degeneracy of $S$ at the boundary $\partial \sC$. Indeed, introducing $g^\delta =\Psi^+_r(\x^\delta)$, $h^\delta =\Psi^-_r(\x^\delta)$, which also converges montonously to $g$ and $h$ on $[0,T]$, respectively, we want to show 
\[
g^\delta\to g, h^\delta\to h, S(J^r|g,h)\in\L^1([0,T])\quad \Rightarrow \quad \limsup_{\delta\to 0}\int_0^T S(J|g^\delta,h^\delta) \d t
\leq \int_0^T S(J|g,h) \d t . \]
For this, we use the rewriting $S(J|g,h)=\inf_{J=u-w} \mathcal{H}(u|g) + \mathcal{H}(w|h)$ from Lemma \ref{prop:PropertiesS}. 
In particular, we may assume that for all $n\in\N $ we have functions $u^n,w^n$ with $J=u^n-w^n$ such that 
\begin{equation}\label{hghg}
\int_0^T S(J|g,h) \d t \geq  \int_0^T \mathcal{H}(u^n|g) + \mathcal{H}(w^n|h) \d t - \frac 1 n.
\end{equation}
Observe that for fixed $n\in\N$, we have by monotone convergence that $\int_0^T\mathcal{H}(u^n|g^\delta)\d t \to \int_0^T\mathcal{H}(u^n|g)\d t$ (and similarly also for $\int_0^T \mathcal{H}(w^n|h^\delta) \d t$). Adding these two limits and using \eqref{hghg}, we obtain
\[
\int_0^T S(J|g,h) \d t \geq \int_0^T \mathcal{H}(u^n|g^\delta) + \mathcal{H}(w^n|h^\delta) + o(\delta) - \frac 1 n \geq \int_0^TS(J|g^\delta,h^\delta) + o(\delta) - \frac 1 n.
\]
This finishes the proof of the claim because $\delta>0$ and $n\in\N$ are arbitrary. 
   
{\bf Step 2:} By Step 1, we may assume that $\x^*$ is strictly positive. We define the constant sequence $\x^{\eps}=\sR(\sq)=\sR(Q_{\fast}\x^{*})$.
Then, we have
\[
\frac{\d}{\d t}\x^{\eps}=\D\sR(\sq)\dot{\sq}=\sum_{r\in{\cal R}_{\slow}}\sJ_{r}\left(\D\sR(\sq)Q_{\fast}\right)\gamma_{r}.
\]
By the definition of the projection $\mathbb{P}$ in Proposition \ref{prop:PropertiesReconstructionAndProjection}, this can be rewritten
as 
\[
\dot{\x}^{\eps}=\sum_{r\in{\cal R}_{\slow}}\sJ_{r}\left(I-\mathbb{P}(\x(t))\right)\gamma_{r}=\sum_{r\in{\cal R}_{\slow}}\sJ_{r}\gamma_{r}+\sum_{r\in{\cal R}_{\slow}}\sJ_{r}\mathbb{P}(\x(t))\gamma_{r}.
\]
  For the last term, we use that $\mathrm{range}(\mathbb{P})=\Gamma_{\fast}$. Hence, there are (time-dependent) coefficients $(J_r=J_r(t))_{r\in\mathcal{R}_\fast}$ such
that $\sum_{r\in{\cal R}_{\slow}}\sJ_{r}\mathbb{P}(\x(t))\gamma_{r}=\sum_{r\in{\cal R}_{\fast}}J_{r}(t)\gamma_{r}$.
The map $\sJ_{r}\mapsto J_{r}$ is linear and uniformly bounded by Proposition \ref{prop:PropertiesReconstructionAndProjection}. Hence, defining the
slow fluxes $J_{r}:=\sJ_{r}$ for $r\in\mathcal{R}_\slow$, we have the exact continuity equation involving all reaction $r$
\[
\dot{\x}^{\eps}=\sum_{r\in\mathcal{R}_\slow\cup\mathcal{R}_\fast}J_{r}\gamma_{r}.
\]
By the compactness argument for slow fluxes in \eqref{eq:FluxesInLC},
we may assume that the slow fluxes $\sJ_{r}$, $r\in{\cal R}_{\slow}$
are in $\L^{{\cal C}}([0,T])$. Since the fast fluxes $J_{r}$, $r\in{\cal R}_{\fast}$
are linearly dependent of $\sJ_{r}$ and obtained by a bounded map, they are also in $\L^{\cC}([0,T])$. \\
{\bf Step 3:} With the above construction, the $\eps$-dependent Lagrangian is given by
\[
L_{\eps}(x,v)=\inf_{v=\WM^T J} \left\{ \sum_{r\in{\cal R}_{\slow}}S(J_{r}|\Psi_{r}^{+}(x),\Psi_{r}^{-}(x))+\sum_{r\in{\cal R}_{\fast}}S(J_{r}|\tfrac{1}{\eps}\Psi_{r}^{+}(x),\tfrac{1}{\eps}\Psi_{r}^{-}(x))\right\} ,
\]
and it suffices to show that for all fast reactions $r\in{\cal R}_{\fast}$, 
we have 
\[
\int_{0}^{T}S(J_{r}|\frac{1}{\eps}\Psi_{r}^{+}(\x),\frac{1}{\eps}\Psi_{r}^{-}(\x))\d t=\int_{0}^{T}\frac{\Psi_{r}(\x^{*})}{\eps}\cC(\frac{\eps J_{r}}{\Psi_{r}(\x^{*})})\d t\to0.
\]
Here we have used that on the slow manifold we have $\Psi_{r}^{+}=\Psi^-_{r}=:\Psi_{r}$. 
Indeed, to see this convergence holds true, we use the estimate
\[
\frac{1}{2}|r|\log(|r|+1)\leq\cC(r)\leq2|r|\log(|r|+1).
\]
Denoting $\Psi_{r}(x^{*})=\rho$ with $0<\frac{1}{M}\leq\rho\leq M<\infty$,
we have for small $\eps>0$ that
\begin{align*}
\frac{\rho}{\eps}\cC(\frac{\eps J_{r}(t)}{\rho}) & \leq\frac{\rho}{\eps}\left\{ 2|\frac{\eps J_{r}(t)}{\rho}|\log\left(|\frac{\eps J_{r}(t)}{\rho}|+1\right)\right\} =2|J_{r}(t)|\log\left(|\frac{\eps J_{r}(t)}{\rho}|+1\right)\\
 & \leq2|J_{r}(t)|\log\left(|J_{r}(t)|+1\right)\leq4\cC(J_{r}(t)),
\end{align*}
which is integrable. Moreover, we have that $\log\left(|\frac{\eps J_{r}(t)}{\rho}|+1\right)\leq\frac{\eps}{\rho}|J_{r}(t)|$,
which is also integrable and converges pointwise to 0 for almost all
$t\in[0,T]$. Hence, also $|J_{r}(t)|\log\left(|\frac{\eps J_{r}(t)}{\rho}|+1\right)$
converges pointwisely to zero for almost all
$t\in[0,T]$, which implies, by the dominated convergence theorem,
that also $$\int_{0}^{T}\frac{\Psi_{r}(\x^{*})}{\eps}\cC(\frac{\eps J_{r}(t)}{\Psi_{r}(x^{*})})\d t\to0.$$
Hence, we have that the constant sequence satisfies ${\cal L}_{\eps}(\x^{*})\to{\cal L}_{\eff}(\x^{*})$,
which proves the claim. 

\end{proof}

Putting the above results together,  Theorem \ref{thm:GammaConvergence} can now be proved.
\begin{proof}[Proof of Theorem \ref{thm:GammaConvergence}]
    The claim now follows from Proposition \ref{prop:liminf} and Proposition \ref{prop:ConstructionRecoverySequence}.
\end{proof}

\section{Convergence of viscosity solutions of $\eps$-dependent HJE}\label{sec5}

  In this section, we   fix any bounded domain $\Omega$, $\overline{\Omega} \subset \bR^I_+$ such that the boundary of $\Omega$ has a positive distance to the boundary $\partial \R^I_+$ and  such that   \eqref{eq:FastSlowHJE} holds. Notice for $\overline{\Omega}$, we    have the lower estimate
  \begin{equation}\label{eq:lb}
      \exists c_0>0,\,  \forall x\in\overline{\Omega},\, \forall r\in\mathcal{R}:\quad  \Psi_r^+(x), \Psi_r^-(x) \geq c_0.
  \end{equation}   
  Recall the
 Hamiltonian $H_\eps(x,p)$ from \eqref{eq:Heps} on $\Omega \times \bR^I$.
Recalling  the stoichiometric subspace $\Gamma=\mathrm{span}\left\{ \gamma_{r}:r\in{\cal R}\right\}$ and $\Gamma^{\perp}=\left\{ q\in\R^{I}:\forall\gamma\in\Gamma:q\cdot\gamma=0\right\}$, it is easy to see for any $p\in \bR^I$, we have
that 
\begin{equation}\label{gg}
H_\eps(x,p)=H_\eps(x, p + p^\perp), \quad  \forall  p^\perp\in \Gamma^{\perp}.
\end{equation}
In particular $H_\eps$ is degenerate because for $p\in \Gamma^{\perp}$ we have $H_\eps(x,p)\equiv \pt_x H_\eps(x,p)\equiv 0.$
Note that we have already seen this degeneracy for the corresponding Lagrangian $L_\eps$ because
if $v\neq \Gamma$ then $L_\eps(x,v)=+\8$.   
 From the above degeneracy of $H_\eps$, one can expect that the usual assumptions on Hamiltonian for the wellposedness of Hamilton-Jacobi equation will only be effective for $p$   evaluated (or measured) in the direction of $\gamma_r$, i.e. for the quantities $\gamma_r \cdot p $, which is shown in the next lemma.  
 
\begin{lem}\label{lem_coe}
For any $\eps>0$, we have the uniform coercivity of $H_\eps$   on $\Omega\subset \R^I$ and in the directions of  $\Gamma \subset \R^I$, i.e., 
\begin{equation}
\inf_{x\in \Omega} H_\eps(x,p) \to +\8 \quad  \text{ whenever } |\gamma_r \cdot p| \to +\8 \text{ for some } r\in \cal{R}.
\end{equation}
\end{lem}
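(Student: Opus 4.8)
The plan is to exploit the two-sided bounds on the reaction intensities over the compact set $\overline{\Omega}$ together with the exponential growth of the individual summands of $H_\eps$. First I would record the relevant bounds: by continuity and compactness there is $M<\infty$ with $\Psi_r^{\pm}(x)\le M$ for all $x\in\overline{\Omega}$ and all $r\in\mathcal{R}$, while the lower estimate \eqref{eq:lb} gives $\Psi_r^{\pm}(x)\ge c_0>0$. Since $\eps>0$ is fixed, the prefactors $1$ (slow) and $1/\eps$ (fast) are just positive constants, which I would absorb into $\kappa:=\min\{1,1/\eps\}$ and $K:=\max\{1,1/\eps\}$.

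Second I would establish a uniform lower bound on every summand. Writing $S_{\alpha,\beta}^{*}(b)=\alpha(\e^{b}-1)+\beta(\e^{-b}-1)$, a one-variable minimization gives $\min_{b}S_{\alpha,\beta}^{*}(b)=-(\sqrt{\alpha}-\sqrt{\beta})^{2}\ge -2M$ whenever $\alpha,\beta\le M$ (equivalently, one may simply use $\e^{\pm b}-1\ge -1$). Hence each term of $H_\eps$, including its coefficient, is bounded below by $-2KM$, so the sum of all but one term is bounded below by $-2(R-1)KM$, uniformly in $x\in\overline{\Omega}$ and $p\in\R^I$.

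Third, for the distinguished reaction $r_0$ with $|\gamma_{r_0}\cdot p|\to\infty$, I would isolate its term and retain only its exponentially growing half: setting $a=\gamma_{r_0}\cdot p$, if $a>0$ then $\Psi_{r_0}^{+}(x)(\e^{a}-1)\ge c_0(\e^{|a|}-1)$ while the other half is $\ge -M$, and symmetrically for $a<0$; thus this term (with its coefficient) is $\ge \kappa c_0(\e^{|a|}-1)-KM$. Combining with the second step yields $H_\eps(x,p)\ge \kappa c_0\,(\e^{|\gamma_{r_0}\cdot p|}-1)-C_\eps$ with $C_\eps$ independent of $x\in\overline{\Omega}$, and letting $|\gamma_{r_0}\cdot p|\to\infty$ gives $\inf_{x\in\Omega}H_\eps(x,p)\to+\infty$.

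I do not expect a serious obstacle here; the only points requiring care are the uniformity in $x$, which rests entirely on the two-sided bound $c_0\le\Psi_r^{\pm}\le M$ over the compact set $\overline{\Omega}$ (and is exactly why one imposes that $\partial\Omega$ keep a positive distance to $\partial\R^I_+$), and the bookkeeping that ensures the exponential blow-up of the single term $r_0$ is not cancelled by the boundedly-negative contributions of the remaining finitely many terms.
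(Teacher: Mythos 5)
Your proposal is correct and follows essentially the same strategy as the paper: both arguments rest on the uniform two-sided bounds $c_0\le\Psi_r^{\pm}\le M$ on $\overline{\Omega}$, bound the remaining finitely many summands below by a constant, and let the distinguished term $r_0$ drive the blow-up. The only cosmetic difference is that the paper keeps both exponentials via $\min\{\Psi_r^+,\Psi_r^-\}(\e^{\gamma_r\cdot p}+\e^{-\gamma_r\cdot p})\ge 2c_0|\gamma_r\cdot p|$, whereas you isolate the growing half and get the (sharper but equally sufficient) rate $c_0(\e^{|\gamma_{r_0}\cdot p|}-1)$.
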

\begin{proof}
From the definition of $H_\eps$ and \eqref{eq:lb}, we have
\begin{align*}
H_\eps(x,p)=& \sum_{r\in \cal{R}}      \bbs{ \Psi^+_r(x)\e^{ \gamma_r  \cdot  p}   -  \Psi^+_r(x) +  \Psi^-_r(x)\e^{- \gamma_r  \cdot p }   -  \Psi^-_r(x) }\\
\geq & \sum_{r\in \cal{R}_\slow} [\min\{\Psi_r^+(x), \Psi_r^-(x)\} \bbs{\e^{\gamma_r \cdot p} + \e^{-\gamma_r \cdot p}} - \Psi_r^+(x)- \Psi_r^-(x)]\\
& +\frac{1}{\eps}\sum_{r\in \cal{R}_\fast} [\min\{\Psi_r^+(x), \Psi_r^-(x)\} \bbs{\e^{\gamma_r \cdot p} + \e^{-\gamma_r \cdot p}} - \Psi_r^+(x)- \Psi_r^-(x)]
\\
\geq & \sum_{r\in \cal{R}_\slow} [ c_0\bbs{\e^{\gamma_r \cdot p} + \e^{-\gamma_r \cdot p}} - \Psi_r^+(x)- \Psi_r^-(x)]\\
& +\frac{1}{\eps}\sum_{r\in \cal{R}_\fast} [c_0 \bbs{\e^{\gamma_r \cdot p} +\e^{-\gamma_r \cdot p}} - \Psi_r^+(x)- \Psi_r^-(x)]
\\
\geq & \sum_{r\in \cal{R}_\slow} [2c_0|\gamma_r\cdot p| - \Psi_r^+(x)- \Psi_r^-(x)] +\frac{1}{\eps}\sum_{r\in \cal{R}_\fast} [2c_0|\gamma_r\cdot p| - \Psi_r^+(x)- \Psi_r^-(x)] \to +\8 
\end{align*}
uniformly in $x\in\Omega$, whenever $|\gamma_r \cdot p| \to +\8$   for some $r\in \cal{R}$.
\end{proof}

Recall the fast-slow HJE problem for the Hamiltonian $H_\eps$ in $\Omega$ with state constraints is given in \eqref{eq:FastSlowHJE}, i.e., 
\begin{equation*} 
\begin{aligned}
\pt_t u^\eps(x,t) + H_\eps(x, \nabla u^\eps(x))\leq 0, \quad (x,t)\in \Omega\times(0,T),\\
\pt_t u^\eps(x,t) + H_\eps(x, \nabla u^\eps(x))\geq 0, \quad (x,t)\in \overline{\Omega}\times(0,T),\\
u^\eps(x,0) = u_0^\eps(x), \quad x\in \Omega.
\end{aligned}
\end{equation*}
The specific assumptions on initial data $u_0^\eps$   will be given below in Assumption \ref{assumption}. Here we also recall the definition of viscosity solutions to \eqref{eq:FastSlowHJE} \cite{capuzzo1988hamilton}.
\begin{defn}\label{def:vis}
    Given any $\eps>0$, a function $u^\eps$ is a viscosity solution to \eqref{eq:FastSlowHJE} if it is both a viscosity subsolution and supersolution in the following sense: $u^\eps$ is a viscosity subsolution of 
  $$\pt_t u^\eps +  H_\eps(x, \nabla u^\eps(x,t)) \leq 0$$
  on  $\Omega\times(0,T)$, if for every $\phi \in C^1(\Omega\times(0,T))$   and every $(x_0,t_0)\in \Omega\times(0,T)$ such that $u-\phi$ has a maximum at $(x_0, t_0)$, we have
  $$\pt_t \phi(x_0, t_0) + H_\eps(x_0, \nabla\phi(x_0,t_0))\leq 0;$$
   $u^\eps$ is a viscosity supersolution of 
  $$\pt_t u^\eps +  H_\eps(x, \nabla u^\eps(x,t)) \geq 0$$
  on  $\overline{\Omega}\times(0,T)$, if for every $\phi \in C^1(\overline{\Omega}\times(0,T))$   and every $(x_0,t_0)\in \overline{\Omega}\times(0,T)$ such that $u-\phi$ has a minimum at $(x_0, t_0)$, we have
  $$\pt_t \phi(x_0, t_0) + H_\eps(x_0, \nabla\phi(x_0,t_0)))\geq 0.$$
\end{defn}

 For fixed $\eps>0$,   the variational representation for the state-constraint viscosity solution to \eqref{eq:FastSlowHJE} follows by \cite{mitake2008asymptotic} and is given by
\begin{equation}\label{eq:RepresentationForEps}
u^\eps(x,t)=\inf_{\mathrm{x}\in \mathrm{AC}([0,t]; \overline{\Omega}),\, \mathrm{x}(t)=x} \big(u^\eps_0(\mathrm{x}(0)) +  
   \int_0^t L_\eps(\mathrm{x}_s,\dot{\mathrm{x}}_s)\ud s\big). 
\end{equation}
{We remark that although our Hamiltonian $H_\eps$ is degenerate  and indeed coercive in $\Gamma$ (see Lemma \ref{lem_coe}), this representation  still holds.}
However, the  existence and representation results are   not uniform in $\eps$.
We also  note that for fixed $\eps>0$, we have
$H_\eps\in \BUC(\Omega\times B_R)$ for any $R>0$. Here $\BUC(\Omega\times B_R)$ means bounded and uniformly continuous functions on $\Omega\times B_R$. 
However, the fast-slow Hamiltonian $H_\eps$ is not uniformly bounded when $\eps \to 0$.
 
We introduce a smaller function space 
\begin{equation}
   \Pi: =   \{u\in \BUC(\Omega): \,\,\nabla u\in\Gamma^\perp_\fast\} =\{u\in \BUC(\Omega): \,\,  \gamma_r\cdot  \nabla u =0, \forall r\in\mathcal{R}_\fast\}    
\end{equation}
and we propose the following conditions for the initial data:

\begin{asmp}[Well prepared initial data] \label{assumption} The family of initial data $(u_0^\eps)_{\eps>0}$ of   \eqref{eq:FastSlowHJE} satisfies the following   conditions 
\begin{enumerate}[(i)]
    \item $\forall \eps>0, \,\, u_0^\eps(\cdot) \in \Pi;$
    \item there exists $C>0$ such that $\|u_0^\eps\|_{C^1(\overline{\Omega})} \leq C$ uniformly in $\eps>0;$
    \item there exists $u^*_0\in C^1(\overline{\Omega})$ such that $u^\eps_0\to u^*_0 \text{  in } C(\overline{\Omega}).$
\end{enumerate}    
\end{asmp}

With the above assumption, we can derive $\eps$-uniform Lipschitz continuity of the solution.

\begin{prop}\label{prop_Lip}
Under Assumption \ref{assumption}, let $u^\eps\in \BUC(\overline{\Omega}\times [0,T])$ be a viscosity solution to   \eqref{eq:FastSlowHJE}, then we have the uniform Lipschitz estimates for $u^\eps$, i.e., there exists a constant $C_L>0$ such that
\begin{equation}\label{Lip}
|\pt_t u^\eps(x,t)|+|\gamma_r\cdot\nabla u^\eps(x,t)| \leq C_L, \quad \text{ for a.e. }\, \, (x,t)\in \Omega\times[0,T],  \,\, \forall r\in \cal{R}.
\end{equation}
\end{prop}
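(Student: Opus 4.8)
The plan is to establish the time-Lipschitz bound first and then transfer it to the spatial quantities $\gamma_r\cdot\nabla u^\eps$, handling the fast and slow reactions by genuinely different arguments. The whole scheme hinges on the observation that the well-prepared data are engineered to annihilate the singular part of the Hamiltonian: since $u_0^\eps\in\Pi$ means $\nabla u_0^\eps\in\Gamma_\fast^\perp$, i.e. $\gamma_r\cdot\nabla u_0^\eps=0$ for every $r\in\mathcal{R}_{\fast}$, the entire $\tfrac1\eps$-block of \eqref{eq:Heps} evaluated at $p=\nabla u_0^\eps(x)$ vanishes identically. Combined with the uniform $C^1$-bound of Assumption \ref{assumption}(ii) and the boundedness of the $\Psi_r^\pm$ on $\overline\Omega$, this yields a constant $C_0$, independent of $\eps$, with $\sup_{x\in\overline\Omega}|H_\eps(x,\nabla u_0^\eps(x))|\le C_0$. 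I would then use $w^\pm(x,t):=u_0^\eps(x)\pm C_0 t$ as barriers: each is $C^1$ and, by the choice of $C_0$, a classical hence viscosity super-/subsolution of \eqref{eq:FastSlowHJE}, so the comparison principle for the state-constrained problem gives $|u^\eps(x,t)-u_0^\eps(x)|\le C_0 t$. Because $H_\eps$ is autonomous, $(x,t)\mapsto u^\eps(x,t+s)$ solves the same equation with datum $u^\eps(\cdot,s)$, and the contraction form of comparison propagates the bound to $\|u^\eps(\cdot,t+s)-u^\eps(\cdot,t)\|_\infty\le\|u^\eps(\cdot,s)-u_0^\eps\|_\infty\le C_0 s$, whence $|\pt_t u^\eps|\le C_0$ a.e.

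For the fast directions I would argue pointwise. At any point of differentiability the equation gives $H_\eps(x,\nabla u^\eps)=-\pt_t u^\eps$, so $|H_\eps(x,\nabla u^\eps)|\le C_0$. Writing $H_\eps=H_\slow+\tfrac1\eps H_\fast$ and using that each slow summand $\Psi_r^+(\e^{s}-1)+\Psi_r^-(\e^{-s}-1)$ is bounded below by $-(\sqrt{\Psi_r^+}-\sqrt{\Psi_r^-})^2$, one has $H_\slow\ge -C_\slow$ uniformly, and the upper bound $H_\slow+\tfrac1\eps H_\fast\le C_0$ forces $H_\fast(x,\nabla u^\eps)\le\eps(C_0+C_\slow)$. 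Now $H_\fast$ is coercive in the fast directions with an $\eps$-independent constant: the estimate in the proof of Lemma \ref{lem_coe}, applied only to $\mathcal{R}_{\fast}$, gives $H_\fast(x,p)\ge 2c_0\sum_{r\in\mathcal{R}_{\fast}}|\gamma_r\cdot p|-2M|\mathcal{R}_{\fast}|$ with $M:=\max_{x\in\overline\Omega,r}\Psi_r^\pm(x)$ and $c_0$ from \eqref{eq:lb}. Hence $\sum_{r\in\mathcal{R}_{\fast}}|\gamma_r\cdot\nabla u^\eps|$ is bounded uniformly in $\eps$; this is precisely the coercivity in the non-degenerate fast subspace alluded to in the introduction.

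The slow directions are the main obstacle, and here a purely pointwise argument is not enough. Since $H_\fast(x,\nabla u^\eps)$ can be of order one and negative (its infimum over $p$ equals $-\sum_{r\in\mathcal{R}_{\fast}}(\sqrt{\Psi_r^+}-\sqrt{\Psi_r^-})^2$), the term $\tfrac1\eps H_\fast$ is only bounded below by $-C/\eps$; feeding this into $H_\eps\le C_0$ together with the exponential coercivity of $H_\slow$ yields merely $|\gamma_r\cdot\nabla u^\eps|\le C+\log(1/\eps)$, which is \emph{not} uniform. To reach the uniform bound I would instead argue through the variational representation \eqref{eq:RepresentationForEps}: to estimate $u^\eps(x,t)-u^\eps(y,t)$ for a slow displacement $x-y=h\gamma_{r_0}$, $r_0\in\mathcal{R}_{\slow}$, take a near-optimal trajectory for the second value and construct a competitor for the first that realizes the displacement using only the slow reaction $r_0$ while being reconstructed onto the fast-equilibrium manifold $\cE_\fast$ via $\sR$, so that the $\tfrac1\eps$-block of $L_\eps$ stays at its minimum and incurs no singular cost, the slow block contributing only $O(h)$. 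The hard part is to control this competitor uniformly in $\eps$: any off-manifold excursion of size $h$ is amplified by $1/\eps$ in the fast Lagrangian, so the perturbed curve must be kept on (or exponentially close to) $\cE_\fast$, and the construction must be reconciled with the state constraint $\overline\Omega\subset\R_+^I$ and with the regime $t\to0$, where the $C^1$-regularity and the $\Gamma_\fast^\perp$-structure of the initial data in Assumption \ref{assumption} are used. This is exactly the step where \eqref{FDB}, \eqref{eq:UFEC} and the reconstruction map enter the Lipschitz estimate, mirroring their role in the recovery-sequence construction of Proposition \ref{prop:ConstructionRecoverySequence}.
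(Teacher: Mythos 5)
Your time-Lipschitz step coincides with the paper's: the well-prepared data annihilate the $1/\eps$ block of $H_\eps(x,\nabla u_0^\eps)$, the functions $u_0^\eps\pm C^*t$ serve as barriers, and comparison plus time-translation give $|\pt_t u^\eps|\le C^*$. The gap is in the spatial estimate. Your pointwise argument for the fast directions presupposes that $u^\eps$ is differentiable a.e.\ in $x$ so that the equation holds pointwise; Rademacher supplies that only \emph{after} spatial Lipschitz continuity is known, and the content actually needed downstream (Arzel\`a--Ascoli in Corollary \ref{cor:ExistenceLimitHJE}) is the Lipschitz modulus itself, not an a.e.\ gradient bound on a possibly null set. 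For the slow directions you correctly diagnose that the naive pointwise bound degenerates to $O(\log(1/\eps))$ because $\tfrac1\eps H_{\fast}$ can be of order $-C/\eps$, but the competitor-trajectory repair you propose is left unexecuted --- you yourself flag the uniform-in-$\eps$ control of the off-manifold excursion as ``the hard part'' --- and it is not the paper's route: the structural hypotheses you invoke for it (\eqref{FDB}, \eqref{eq:UFEC}, the reconstruction map $\sR$) play no role in the paper's proof of this proposition.

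The paper settles both directions at once with a penalization argument restricted to the stoichiometric affine subspace: for fixed $(x,t)$ one maximizes $\varphi(y,s)=u^\eps(y,s)-M|s-t|-C_L|y-x|-\delta/(R^2-|x-y|^2)$ over $y\in(x+\Gamma)\cap\overline{B_R}(x)$. The decisive point, which your analysis misses, is that the gradient of the test function at the maximizer $y^*$ is \emph{parallel to} $y^*-x$. The $1/\eps$ coercivity first forces $\gamma_r\cdot(y^*-x)=0$ for all $r\in\mathcal{R}_{\fast}$ once $C_L$ is large, and since the test momentum is a positive multiple of $y^*-x$, this makes the fast block of $H_\eps$ at that momentum vanish \emph{identically} --- it is not merely bounded below by $-C/\eps$. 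The subsolution inequality then involves only the slow block, whose coercivity via \eqref{eq:lb} forces $\gamma_r\cdot(y^*-x)=0$ for all $r\in\mathcal{R}_{\slow}$ as well; since $y^*-x\in\Gamma$ this yields $y^*=x$, and sending $\delta\to0$ and symmetrizing gives $|u^\eps(y,t)-u^\eps(x,t)|\le C_L|y-x|$ for $y-x\in\Gamma$, hence \eqref{Lip}. This is exactly the mechanism that defeats the $-C/\eps$ obstruction you identified, and it needs no variational representation, no reconstruction map, and no detour through the slow manifold.
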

\begin{proof}
The proof is performed in three steps.\\
{\bf Step 1:} Existence of $u^\eps$:\\
     One can directly verify that $u^\eps\in \BUC(\overline{\Omega}\times[0,T])$ given by \eqref{eq:RepresentationForEps} is  a viscosity solution  to \eqref{eq:FastSlowHJE} for fixed $\eps$. It indeed can be proved via    Perron's method  \cite[Theorems 5.2, 5.8]{mitake2008asymptotic}. \\
{\bf Step 2:} $u^\eps$ is Lipschitz continuous in time for all $t\in [0,T]$ uniformly in $\eps$:\\
        Since $\|\nabla u_0^\eps\|_{\L^\8}\leq C$ and $\gamma_r \cdot \nabla u_0^\eps = 0, \,\, \forall r \in \cal{R}_\fast$, we conclude that there exists some constant $C^*>0$    such that
\begin{equation}
|H_\eps(x,\nabla u_0^\eps(x))|_{\L^\8} \leq C^* \quad \text{ uniformly in } \eps.
\end{equation}
Thus one can verify that
\begin{equation}\label{varphi}
\begin{aligned}
\underline{\varphi}(x,t) := u_0^\eps - C^* t \quad \text{ is a classical subsolution};\\
\overline{\varphi}(x,t) := u_0^\eps + C^* t \quad \text{ is a classical supersolution}.
\end{aligned}
\end{equation}
Then based on the comparison principle \cite[Theorem 3.5]{mitake2008asymptotic},   we have
\begin{equation}
\underline{\varphi}(x,t) \leq u^\eps(x,t) \leq \overline{\varphi}(x,t)  \quad \text{ uniformly in } \eps.
\end{equation}
In particular, together with Assumption   \ref{assumption} for $u_0^\eps$, we have that $u^\eps$ is bounded uniformly in $\eps>0$ and $(x,t)\in \overline{\Omega}\times[0,T]$. 
Then we have
\begin{equation}
-C^* \leq \frac{u^\eps(x,t)-u_0^\eps}{t} \leq C^*.
\end{equation}
Taking the supremum for $t\geq 0$, we obtain the Lipschitz continuity in time at $t=0$. {Moreover, to obtain Lipschitz continuity in time at any $t$, we note  the Hamiltonian is time homogeneous and thus any time translation of a solution is still a solution $v(x,t)=u^\eps(x,s+t)$ with initial data $v_0=v(x,0)=u^\eps(x,s)$. From
the comparison principle and 
$$v_0-\|u_0^\eps-v_0\|_{\L^\8}\leq u_0^\eps \leq v_0+\|u_0^\eps-v_0\|_{\L^\8},$$
we have
$$v(x,t)-\|u_0^\eps-v_0\|_{\L^\8}\leq u^\eps(x,t) \leq v(x,t)+\|u_0^\eps-v_0\|_{\L^\8}.$$
This means 
$$u^\eps(x,s+t)-\|u_0^\eps-v_0\|_{\L^\8}\leq u^\eps(x,t) \leq u^\eps(x,s+t)+\|u_0^\eps-v_0\|_{\L^\8}$$
and thus
\begin{equation}
    \left| \frac{u^\eps(x,t+s)-u^\eps(x,t)}{s} \right|\leq \left\| \frac{u^\eps(\cdot,s)-u^\eps(\cdot,0)}{s} \right\|_{\L^\8} \leq C^*.
\end{equation}
 Taking the supremum for $s\geq 0$, one   concludes} the Lipschitz continuity in time at any $t$
\begin{equation}\label{Lip_t}
|\pt_t u^\eps(x,t)|\leq C^* \quad \text{ for a.e. } (x,t)\in \Omega\times[0, T].
\end{equation}
{\bf Step 3:} $u^\eps$ is Lipschitz continuous in space uniformly in $\eps$:


First fix any $x\in \Omega$ and $t\in(0,+\8)$. Given any $R>0$ such that $\overline{B_R}(x)\subset \overline{\Omega}$, for any $y\in (x+\Gamma) \cap \overline{B_R}(x)$, consider the function
\begin{equation}
\varphi(y,s) = u^\eps(y,s) - M|s-t| - C_L|y-x| - \frac{\delta}{R^2-|x-y|^2},
\end{equation}
where $C_L, M>0$  are constants large enough to be chosen later and $\delta>0$ is an arbitrary   constant.
Then due to the boundedness of $u^\eps$ and $\varphi \to -\8$ as $y\to \pt B_R(x),$   
there exists {a maximizer   $y^* \in (x+\Gamma)\cap B_R(x)$  and $t^*\in (0,+\8)$ of $\varphi(y,s)$}, such that
\begin{equation}\label{varphiM}
\begin{aligned}
&\varphi(y^*,t^*)= u^\eps(y^*,t^*) - M|t^*-t| - C_L|y^*-x| - \frac{\delta}{R^2-|x-y^*|^2} \\
\geq  &\varphi(y,s) = u^\eps(y,s) - M|s-t| - C_L|y-x| - \frac{\delta}{R^2-|x-y|^2}
\end{aligned}
\end{equation}
for any $y\in (x+\Gamma)\cap \overline{B_R(x)}$ and any $s\in(0,+\8)$.
Since $u$ is Lipschitz in time, for sufficient large $M>0$, one must have $t^*=t$. Otherwise, one can take $y=y^*$ and $s=t$ in \eqref{varphiM} to obtain
 $$ u^\eps(y^*,t^*) - M|t^*-t|\geq u^\eps(y^*,t).$$
This, together with   the Lipschitz constant $C^*$ in time in \eqref{Lip_t}, we further have
$$u^\eps(y^*,t) + (C^*- M)|t^*-t| > u^\eps(y^*,t^*) - M|t^*-t|\geq u^\eps(y^*,t),$$
which is impossible for   $M>C^*.$ Hence one must have $t^*=t.$

Now we show that $y^*=x$ for $C_L$ sufficiently large. To show this by the arguments of contradiction, we first assume $y^*\neq x$.
From the Lipschitz estimate in time \eqref{Lip_t}, and because $u^\eps$ is a viscosity subsolution and $|y-x|$ is differentiable at any $y\neq x$, we know
\begin{equation}\label{original}
H_\eps(y^*, C_L \frac{y^*-x}{|y^*-x|}+ \frac{2\delta( y^*-x)}{(R^2-|x-y^*|^2)^2})\leq C,
\end{equation}
where $C$ is a constant depending only on $C^*$.
Then using the same estimate of Lemma \ref{lem_coe}, we have
\begin{equation}\label{tm_HC}
\begin{aligned}
C\geq & H_\eps(y^*, C_L \frac{y^*-x}{|y^*-x|}+ \frac{2\delta( y^*-x)}{(R^2-|x-y^*|^2)^2})\\
\geq & \sum_{r\in \cal R_\slow} \Big[2c_0 \big(C_L\Big|\gamma_r\cdot \frac{y^*-x}{|y^*-x|}\Big|-  \Big|\frac{2\delta\gamma_r\cdot(x-y^*)}{(R^2-|x-y^*|^2)^2}\Big|\big) - \Psi_r^+(x)- \Psi_r^-(x)\Big] \\
&+\frac{1}{\eps}\sum_{r\in \cal R_\fast} \Big[2c_0\big(C_L\Big|\gamma_r\cdot \frac{y^*-x}{|y^*-x|}\Big|-  \Big|\frac{2\delta\gamma_r\cdot(x-y^*)}{(R^2-|x-y^*|^2)^2}\Big|\big) - \Psi_r^+(x)- \Psi_r^-(x)\Big].
\end{aligned}
\end{equation}
Since $\Psi^{\pm}_r$ is bounded and $y^*$ is in the interior of $B_R(x)$, we have
\begin{equation}
\sum_{r\in \cal R_\fast} \Big[2c_0 C_L\Big|\gamma_r\cdot \frac{y^*-x}{|y^*-x|}\Big| - \Psi_r^+(x)- \Psi_r^-(x)\Big] \leq \eps C.
\end{equation}
Taking $C_L$ sufficiently large, this implies 
\begin{equation}\label{If}
\gamma_r\cdot  (y^*-x)=0, \quad \forall r\in \cal{R}_\fast.
\end{equation}
Therefore, plugging \eqref{If} into the original Hamiltonian \eqref{original}, the summation for the fast reaction vanish, so \eqref{original} implies
 \begin{equation}
 \sum_{r\in \cal{R}_\slow} \Big[2c_0 \big(C_L\big|\gamma_r\cdot \frac{y^*-x}{|y^*-x|}\big|-  \big|\frac{2\delta\gamma_r\cdot(x-y^*)}{(R^2-|x-y^*|^2)^2}\big|\big) - \Psi_r^+(x)- \Psi_r^-(x)\Big]\leq C.
 \end{equation}
With the same arguments as \eqref{If}, we obtain
 \begin{equation}\label{Ir}
\gamma_r\cdot (y^*-x)=0, \quad \forall r\in \cal{R}_\slow.
\end{equation}
Combining \eqref{If} and \eqref{Ir}, notice also $y^*-x\in \Gamma$, thus we must have $y^*=x$.
Therefore, for such a sufficient large $C_L$,  \eqref{varphiM} yields 
\begin{equation}
u^\eps(y,s)-u^\eps(x,t) \leq C_L|y-x|+M|s-t|-\frac{\delta}{R^2} + \frac{\delta}{R^2-|x-y|^2}.
\end{equation}
Taking $\delta\to 0$, $s=t$ and then exchanging $x,y$  shows 
\begin{equation}
|u^\eps(y,t)-u^\eps(x,t)| \leq C_L|y-x|, \quad \text{ for a.e. }\, \, t\in  [0,T],  \,x, y\in      {\Omega} \, \text{ with } x-y \in \Gamma.
\end{equation}
This implies
\begin{equation}\label{Lip_x}
    |\gamma_r \cdot \nabla u^\eps(x,t)| \leq C_L, \quad \text{ for a.e. }\, \, (x,t)\in \Omega\times[0,T], \,\, \forall r\in \cal{R}.
\end{equation}
Combining \eqref{Lip_t} and \eqref{Lip_x}, we finish the proof. 
\end{proof}


\begin{rem}
We remark that the bound in \eqref{Lip} can be refined for the fast reactions.    Indeed, from the $\eps$-uniform bound
\begin{align*}
    \frac 1 \eps |\sum_{r\in{\cal R}_{\fast}}\Psi_{r}^{+}(x)\left(\e^{\gamma_{r}\cdot \nabla u^\eps}-1\right)+\Psi_{r}^{-}(x)\left(\e^{-\gamma_{r}\cdot \nabla u^\eps}-1\right)|\leq C,
\end{align*}
we conclude  for fast reactions $r\in{\cal R}_{\fast}$ and for $x\in \MS=\cE_\fast\cap\overline\Omega$ (such that we have $\Psi_{r}^{+}(x)=\Psi_{r}^{-}(x)$)   that
\begin{align*}
0\leq 2\Psi_r^+(x)(\cosh (\gamma_r \cdot \nabla u^\eps)-1)\leq \sum_{r\in{\cal R}_{\fast}} 2\Psi_r^+(x)(\cosh (\gamma_r \cdot \nabla u^\eps)-1) \leq C \eps  \to 0.
\end{align*}
This, together with \eqref{eq:lb}, yields that for $r\in{\cal R}_{\fast}$,
$$\gamma_r\cdot \nabla   u^\eps \to    0 \quad \text{ uniformly in } t\in [0,T], x\in \MS.$$
\end{rem}

Based on the above Lipschitz estimates for the viscosity solution, which is uniform in $\eps$, we have the following uniqueness theorem. With the Lipschitz estimates, the uniqueness is standard, however, we still give a brief proof since our Hamiltonian is  degenerate on $\Gamma^\perp$  and the Lipschitz estimate is in the sense of \eqref{Lip}.
\begin{thm}\label{thm:HJElimit}
Under Assumption \ref{assumption},      \eqref{eq:FastSlowHJE} has a   unique Lipschitz  viscosity solution $u^\eps(x,t)$, for which, uniform Lipschitz estimate \eqref{Lip} holds.
\end{thm}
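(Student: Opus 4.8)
The plan is to treat existence and the uniform bound \eqref{Lip} as already delivered by Proposition \ref{prop_Lip}: the Lipschitz viscosity solution $u^\eps$ is the one given by the variational representation \eqref{eq:RepresentationForEps} (existence via Perron's method, Step 1 of that proof), and \eqref{Lip} is precisely its conclusion. Hence the only remaining point is uniqueness, which I would obtain from a comparison principle: if $u$ is a Lipschitz viscosity subsolution on $\Omega\times(0,T)$ and $v$ a Lipschitz viscosity supersolution on $\overline\Omega\times(0,T)$ with $u(\cdot,0)\le v(\cdot,0)$, then $u\le v$ on $\overline\Omega\times[0,T]$; applying this with the two solutions interchanged gives $u^\eps=v^\eps$.

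For the comparison I would run the standard doubling-of-variables argument, assuming for contradiction that $\sup(u-v)>0$ and introducing
$$\Phi_{\alpha,\beta}(x,y,t,s)=u(x,t)-v(y,s)-\frac{|x-y|^2}{2\alpha}-\frac{(t-s)^2}{2\beta}$$
together with the usual small penalizations in $t$ that push the maximum away from $t=0$ and $t=T$. At an interior maximizer $(x_\alpha,y_\alpha,t_\alpha,s_\alpha)$ the sub- and supersolution inequalities give, with $p_\alpha:=(x_\alpha-y_\alpha)/\alpha$ and $a_\alpha:=(t_\alpha-s_\alpha)/\beta$, that $a_\alpha+H_\eps(x_\alpha,p_\alpha)\le 0$ and $a_\alpha+H_\eps(y_\alpha,p_\alpha)\ge 0$. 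The time-penalty slope $a_\alpha$ is bounded by the uniform time-Lipschitz constant $C^*$ from \eqref{Lip_t}, so $H_\eps(x_\alpha,p_\alpha)\le C^*$.

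The place where the \emph{degeneracy} must be addressed is the control of the test gradient $p_\alpha$. Here the identity \eqref{gg} is decisive: writing $\pi_\Gamma$ for the orthogonal projection onto $\Gamma$, one has $H_\eps(x,p)=H_\eps(x,\pi_\Gamma p)$, so the possibly unbounded part $(I-\pi_\Gamma)p_\alpha$ never enters the inequalities, and the coercivity of $H_\eps$ in the $\Gamma$-directions from Lemma \ref{lem_coe} upgrades $H_\eps(x_\alpha,\pi_\Gamma p_\alpha)\le C^*$ to a uniform bound $|\gamma_r\cdot p_\alpha|\le C$ for every $r\in\mathcal R$, that is $|\pi_\Gamma p_\alpha|\le C$. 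With $|\pi_\Gamma p_\alpha|$ bounded and $|x_\alpha-y_\alpha|\to 0$, $|x_\alpha-y_\alpha|^2/\alpha\to 0$ (the standard penalization estimate), the only surviving term after subtracting the two viscosity inequalities is $H_\eps(y_\alpha,\pi_\Gamma p_\alpha)-H_\eps(x_\alpha,\pi_\Gamma p_\alpha)$. For fixed $\eps$ this is controlled by the local Lipschitz continuity of the intensities $\Psi_r^\pm$ on $\overline\Omega$ (finite, and bounded below by \eqref{eq:lb}) times the bounded factors $e^{\pm\gamma_r\cdot\pi_\Gamma p_\alpha}$, and hence tends to $0$, contradicting $\sup(u-v)>0$.

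Finally, the state-constraint boundary is handled by the fact that the admissible velocities live in $\Gamma$ — recall $L_\eps(x,v)=+\infty$ for $v\notin\Gamma$ — so the effective dynamics stay on the affine slices $x+\Gamma$, on which $H_\eps$ is genuinely coercive; the state-constrained comparison theory of \cite{capuzzo1988hamilton,mitake2008asymptotic} then applies and ensures that the doubling maximizer in the $x$-variable can be kept where the subsolution inequality is available. I expect the coupling of this $\Gamma^\perp$-degeneracy with the constraint on $\partial\Omega$ to be the only delicate point; away from it the argument is the classical Lipschitz comparison, so that the dependence of the Hamiltonian on $p$ solely through $\pi_\Gamma p$, combined with the directional Lipschitz bound \eqref{Lip}, is exactly what makes the proof go through.
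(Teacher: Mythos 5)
Your proposal is correct in outline but takes a genuinely different route from the paper. You prove uniqueness by running the doubling-of-variables comparison argument directly on the degenerate Hamiltonian $H_\eps$, using the invariance \eqref{gg} to discard the $\Gamma^\perp$-component of the test gradient and the directional coercivity of Lemma \ref{lem_coe} (fed by the time-Lipschitz bound \eqref{Lip_t}) to bound $|\gamma_r\cdot p_\alpha|$ at the doubled maximum, after which the Lipschitz dependence of the intensities on $x\in\overline\Omega$ closes the argument. The paper instead uses the a priori estimate \eqref{Lip} from Proposition \ref{prop_Lip} to \emph{truncate}: it defines a modified Hamiltonian $\widetilde H_\eps$ that agrees with $H_\eps$ on the cylinder $\{|\gamma_r\cdot p|\le C_L,\ \forall r\}$ where every Lipschitz solution lives, equals $|p|$ for $|\gamma_r\cdot p|\ge 2C_L$, and therefore satisfies the standard structure conditions \eqref{H_asm}; any Lipschitz solution of \eqref{eq:FastSlowHJE} solves the modified equation, so uniqueness follows from the off-the-shelf comparison theorem of \cite{mitake2008asymptotic}. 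The two proofs exploit the same mechanism --- the a priori bound \eqref{Lip} confines the relevant momenta to a compact set on which $H_\eps$ is well-behaved --- but your version makes the mechanism explicit inside the comparison argument, while the paper's version packages it into a one-line reduction to known theory. What the paper's route buys is precisely the point you flag as delicate: by reducing to a Hamiltonian satisfying \eqref{H_asm}, it inherits the state-constraint comparison machinery of \cite{capuzzo1988hamilton,mitake2008asymptotic} wholesale, whereas in your direct argument you would still have to justify the inward shift of the doubling penalty along $\Gamma$-directions near $\partial\Omega$ (a cone-type condition on the slices $(x+\Gamma)\cap\Omega$), which you assert but do not carry out. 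Since that boundary step is the only substantive gap, and you correctly identify it as the crux, the proposal is a viable alternative proof modulo that verification.
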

\begin{proof}
{   The existence is obvious thanks to the variational representation \eqref{eq:RepresentationForEps}. }
Based on Lipschitz estimates in Proposition \ref{prop_Lip}, we can introduce a modified Hamiltonian
$\widetilde{H}_\eps\in C( \Omega\times \bR^n)$ such that
\begin{equation}
\widetilde{H}_\eps(x,p) =\left\{ \begin{array}{cc}
H_\eps(x,p), \quad & \text{ if } \,  |\gamma_r \cdot p|\leq C_L \text{ for all } r\in \cal R;\\
|p|, \quad & \text{ if } \,  |\gamma_r \cdot p|\geq 2C_L \text{ for some } r\in \cal R,
\end{array}
\right.
\end{equation}
and Lipschitz continuously connected otherwise.
We point out that although the modified Hamiltonian is based on the  spatial Lipschitz estimates  only in the directions of $\Gamma$, it still satisfies the usual assumptions for Hamiltonian (cf. \cite{bardi1997optimal, Tran21}): for any $p,q\in \bR^I$ and $x,y\in \Omega$:
\begin{equation}\label{H_asm}
\begin{aligned}
|\widetilde{H}_\eps(x,p)-\widetilde{H}_\eps(y,p)|\leq C_\eps(1+|p|)|x-y|,\quad 
|\widetilde{H}_\eps(x,p)-\widetilde{H}_\eps(x,q)|\leq C_\eps|p-q|.
\end{aligned}
\end{equation}
Indeed, whenever $|\gamma_r \cdot p|\leq C_L, \forall r\in\mathcal{R}$, we have $|\widetilde{H}_\eps(x,p)-\widetilde{H}_\eps(y,p)|\leq C_\eps|x-y|$; and otherwise $|\widetilde{H}_\eps(x,p)-\widetilde{H}_\eps(y,p)|\leq C_\eps(1+|p|)|x-y|$. Moreover, if $|\gamma_r \cdot p|\leq C_L, \forall r\in\mathcal{R}$, we have $|\widetilde{H}_\eps(x,p)-\widetilde{H}_\eps(x,q)|\leq C_\eps\sum_r|\gamma_r\cdot(p-q)|\leq C_\eps|p-q|$; and otherwise $|\widetilde{H}_\eps(x,p)-\widetilde{H}_\eps(x,q)|\leq C_\eps|p-q|$.

Thus for any  $u^\eps\in \BUC(\overline{\Omega}\times[0,T])$   solves 
  \eqref{eq:FastSlowHJE} with the Lipschitz estimate \eqref{Lip} will also solve the HJE
  with $H_\eps$ replaced by $\widetilde{H}_\eps$. Then by \eqref{H_asm}, one has the uniqueness of viscosity solution to \eqref{eq:FastSlowHJE}
  with $H_\eps$ replaced by $\widetilde{H}_\eps$,   so $u^\eps$ is the unique Lipschitz viscosity solution to \eqref{eq:FastSlowHJE}. 
\end{proof}

Using the $\eps$-uniform Lipschitz estimate and  the $\eps$-uniform boundedness, we then apply the Arzel\'a-Ascoli Theorem in the Banach space $C(K\times[0,T])$ for any compact subset $K\subset\Omega$.
\begin{cor}\label{cor:ExistenceLimitHJE}
 Let $u^\eps$ be the unique Lipschitz viscosity solution obtained in Theorem \ref{thm:HJElimit}. For any  compact subset $K\subset\Omega$, {   there exist $\hat{u}\in \BUC(\overline{\Omega}\times[0,T])$ and a subsequence $\eps_k$ such that 
 $u^{\eps_k}$ converges to $\hat{u}$ uniformly in $C(K\times [0,T])$ and the uniform Lipschitz estimate \eqref{Lip} holds for $\hat{u}$.}
\end{cor}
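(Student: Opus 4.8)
The plan is to realize $\hat u$ as a locally uniform limit of a subsequence through the Arzel\'a--Ascoli theorem applied on $C(K\times[0,T])$, and then transport the a priori estimate \eqref{Lip} and the uniform bound to the limit. Two ingredients feed Arzel\'a--Ascoli: uniform boundedness and equicontinuity. Uniform boundedness on $\overline{\Omega}\times[0,T]$ is already available from Step~2 of Proposition \ref{prop_Lip}: the comparison with the barriers $\underline\varphi,\overline\varphi$ in \eqref{varphi} together with Assumption \ref{assumption}(ii) gives $\|u^\eps\|_{\L^\infty(\overline{\Omega}\times[0,T])}\le \|u_0^\eps\|_{\L^\infty}+C^*T$ uniformly in $\eps$. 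The equicontinuity is where the content lies, and I would split it according to the degeneracy of $H_\eps$.

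In time and in every direction of $\Gamma=\mathrm{span}\{\gamma_r\}$, the uniform Lipschitz estimate \eqref{Lip} supplies directly an $\eps$-independent modulus $C_L$. The delicate point is equicontinuity transverse to $\Gamma$, i.e.\ along the conserved-quantity directions $\Gamma^\perp$: by \eqref{gg} and Lemma \ref{lem_coe}, $H_\eps$ is coercive only in the $\Gamma$-directions, so \eqref{Lip} says nothing about $\nabla u^\eps$ in $\Gamma^\perp$. To close this gap I would use the variational representation \eqref{eq:RepresentationForEps} together with the structural fact that $L_\eps(x,v)=+\infty$ unless $v\in\Gamma$: every admissible curve preserves the affine slice $\{x':Qx'=Qx\}$, so the transverse dependence of $u^\eps$ enters only through the initial datum $u_0^\eps$, which is uniformly $C^1$ by Assumption \ref{assumption}, and through the position-dependence of the intensities $\Psi_r^\pm$ along a shifted competitor. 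Shifting a near-optimal curve for $u^\eps(x,t)$ by $z\in\Gamma^\perp$ produces a competitor for $u^\eps(x+z,t)$; on a compact $K$ bounded away from $\partial\R^I_+$ the logarithmic term $-\tfrac12 J_r\,\gamma_r\cdot\log(\x/x_s^*)$ is benign, and the genuinely dangerous contribution is the fast relaxation term $\tfrac1\eps(\sqrt{\Psi_r^+(\x)}-\sqrt{\Psi_r^-(\x)})^2$ carrying the $1/\eps$ scaling. \textbf{This is the main obstacle}, and I would control it exactly as in Step~4 of Proposition \ref{prop:Compactness}, where the a priori action bound forces this quantity to stay integrable uniformly in $\eps$ (the fast part being driven toward detailed balance), so that the shift costs only $O(|z|)$ uniformly and yields the transverse modulus.

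With uniform boundedness and full equicontinuity on each compact $K\subset\Omega$, Arzel\'a--Ascoli produces for each $K$ a uniformly convergent subsequence; a diagonal extraction over an exhaustion $K_n\uparrow\Omega$ then yields a single subsequence $\eps_k\to0$ with $u^{\eps_k}\to\hat u$ locally uniformly on $\Omega\times[0,T]$. Since both the estimate \eqref{Lip} and the uniform bound are stable under locally uniform limits, $\hat u$ inherits \eqref{Lip}, is uniformly continuous, and hence extends to an element of $\BUC(\overline{\Omega}\times[0,T])$. The remaining steps are routine compactness; essentially all the work sits in the transverse equicontinuity estimate described above.
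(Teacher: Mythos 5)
Your overall framework---uniform boundedness from the barriers in \eqref{varphi}, equicontinuity in $t$ and along $\Gamma$ from \eqref{Lip}, Arzel\'a--Ascoli with a diagonal extraction over an exhaustion of $\Omega$, and stability of \eqref{Lip} under uniform limits---is exactly the paper's route: the paper's entire proof is the sentence preceding the corollary, which invokes the $\eps$-uniform Lipschitz estimate, the $\eps$-uniform bound, and Arzel\'a--Ascoli. You are also right to flag that \eqref{Lip} controls $\nabla u^\eps$ only in the directions of $\Gamma$, a point on which the paper is silent.

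The problem is your repair of the transverse equicontinuity. Shifting a near-optimal curve $\x$ for $u^\eps(x,t)$ by $z\in\Gamma^\perp$ does give an admissible competitor for $u^\eps(x+z,t)$, but its action is $\int_0^t L_\eps(\x(s)+z,\dot{\x}(s))\,\d s$, whose fast part contributes, among other terms, $\frac1\eps\sum_{r\in{\cal R}_\fast}\bigl(\sqrt{\Psi_r^+(\x+z)}-\sqrt{\Psi_r^-(\x+z)}\bigr)^2$. The bound you appeal to from Step~4 of Proposition \ref{prop:Compactness} controls this quantity along the \emph{original} curve---it is derived from the a priori action bound on $\x$, which is precisely what is not yet available for the shifted curve---and it does not transfer: by Lemma \ref{lem:FDB}, $\cE_{\fast}$ is cut out by $\gamma_r\cdot\log(x/x_s^*)=0$, so it is invariant under multiplicative, not additive, perturbations; even if $\x(s)\in\cE_\fast$ exactly, the point $\x(s)+z$ is generically at distance of order $|z|$ from $\cE_\fast$, and the fast relaxation term is then of order $|z|^2/\eps$. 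Your competitor therefore yields a modulus like $C|z|+C|z|^2/\eps$, which degenerates as $\eps\to0$ for fixed $z$; the claim that ``the shift costs only $O(|z|)$ uniformly'' is unjustified and, as stated, false. Closing this would require a genuinely different competitor (for instance one that first relaxes onto $\cE_\fast$ within the slice through $x+z$, in the spirit of the recovery-sequence construction of Proposition \ref{prop:ConstructionRecoverySequence}), or a PDE doubling argument not based on the representation formula; neither is supplied. Note that when $\Gamma=\R^I$ the transverse directions are absent and your (and the paper's) argument is complete as written.
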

 {   We remark that the limit $\hat{u}$ for $x\in \MS=\cE_\fast\cap\overline\Omega$ is actually proven to be unique via the Gamma-convergence result and Proposition \ref{prop:identify}. Thus one indeed obtains the uniform convergence on $K\times[0,T],$ for any compact $K\subset\MS.$
 }
\bigskip

\section{Identification of limiting HJE with semigroup representation}\label{sec6}
The aim of the section is to identify the limit $u^*$ of the sequence of solutions $u^\eps$ from Corollary \ref{cor:ExistenceLimitHJE} by the $\Gamma$-convergence result from Section \ref{s:GammaConvergence}.    In particular, the following representation formula provides also a uniqueness result for the limit.      The viscosity solutions $u^\eps$ in the representation formula \eqref{eq:RepresentationForEps} is given as an infimum and  $\Gamma$-convergence is tailored to provide convergence of minimizers. In particular, we want to show that $u^*$ satisfies the variational representation \eqref{eq:RepresentationUstar} and solves the Hamilton-Jacobi equation for the effective $H_\eff$. Finally, combining Theorem \ref{thm:effectiveHJE} and the limit $u^*$ of the sequence of solutions $u^\eps$ from Corollary \ref{cor:ExistenceLimitHJE}, we finish the proof of the \textbf{Main Theorem} in the introduction. 

\subsection{Consequence of the $\Gamma$-convergence result}
Before we connect $u^*$ with a representation formula, we first recall how $\Gamma$-convergence provides the identification of the limit of  minimizers of functionals. For that, let
 $F_{\eps}\stackrel{\Gamma}{\to}F_{0}$ and let $\x_\eps$ be s.t.  $F_\eps(\x_{\eps})=\inf F_{\eps}$
and $\x_{\eps}\to \x_{0}$. Then we have that $\x_{0}=\inf F_{0}$. Indeed, we have by the $\Gamma$-liminf, that $\inf F_{0}\leq F_{0}(\x_{0})\leq\liminf_{\eps\to0}F_{\eps}(\x_{\eps})=\liminf_{\eps\to0}\inf F_{\eps}$.
Moreover, for any $\x$ there is a recovery sequence, $\bar{\x}_{\eps}\to \x$
such that $F_{0}(\x)=\limsup_{\eps\to0}F_{\eps}(\bar{\x}_{\eps})\geq\limsup_{\eps\to0}\inf F_{\eps}$.
Since, $\x$ is arbitrary, we obtain that $\inf F_{0}\geq\limsup_{\eps\to0}\inf F_{\eps}$
which thus concludes that $\lim_{\eps\to0}\inf F_{\eps}=\inf F_{0}=F_{0}(\x_{0})$.

   In our situation, we have on $\L^{1}([0,T],\sC)$ that ${\cal L}_{\eps}\xrightarrow{M}{\cal L}_{\eff}$ provided that the final point of the trajectories is fixed, see \eqref{thm:GammaConvergence}.    
For fixed $u_{0}^\eps\in C(\overline{\Omega})$, $T>0$ and $x\in\overline{\Omega}$, we recast the viscosity solution $u^\eps$, represented in \eqref{eq:RepresentationForEps}, as 
\[
u^{\eps}(x,T)=\inf\left\{ u_{0}^\eps(\x_{\eps}(0))+{\cal L}_{\eps}(\x_{\eps}):{ \x_{\eps}\in\mathrm{AC}([0,T],\overline{\Omega})},\x_{\eps}(T)=x\right\} .
\]

 We define the compact slow-manifold for the Hamilton-Jacobi equation by
\begin{equation}\label{eq:MS}
    \MS:=\overline{\Omega} \cap \cE_\fast.
\end{equation}
  
Combining the $\Gamma$-convergence result in Theorem \ref{thm:GammaConvergence} and the uniform convergence from Corollary \ref{cor:ExistenceLimitHJE}, we give a representation formula for the limit $u^*$ in the following proposition.

\begin{prop}\label{prop:identify}
Given any $T>0$, assume the initial data $u^\eps_0$ satisfies Assumption \ref{assumption} and $u^\eps$ is the viscosity solution to \eqref{eq:FastSlowHJE}. Assume both   \eqref{FDB} and   \eqref{eq:UFEC} conditions.
Let $\mathcal{L}_\eff$ be defined by \eqref{eq:EffectiveAF}, and let $\hat{u}$ be the limit of $u^\eps$ in Corollary \ref{cor:ExistenceLimitHJE}.    {   Then   for all $x\in\MS$,  we have the unique variational representation } 
\begin{align*}
u^{*}(x,T):=\lim_{\eps\to 0} u^\eps(x,T)=&\inf\left\{ u^*_{0}(\x(0))+{\cal L}_{\eff}(\x):\x\in\mathrm{AC}([0,T],\overline{\Omega}),\,\x(T)=x\right\} \\
=&\inf\left\{ u^*_{0}(\x(0))+{\cal L}_{\eff}(\x): \x\in\mathrm{AC}([0,T],\MS),\,\x(T)=x\right\} .
\end{align*}
{   Moreover, $u^\eps$  converges to $u^*=\hat{u}$ uniformly on $K\times[0,T]$ for  any compact subset $K\subset \MS^o.$}
\end{prop}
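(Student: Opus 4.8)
The plan is to realize $u^\eps(x,T)$ and its candidate limit as infima of the perturbed action functionals
\[
F_\eps(\x):=u_0^\eps(\x(0))+{\cal L}_\eps(\x),\qquad F_0(\x):=u_0^*(\x(0))+{\cal L}_\eff(\x),
\]
over $\{\x\in\mathrm{AC}([0,T],\overline\Omega):\x(T)=x\}$, so that $u^\eps(x,T)=\inf F_\eps$ by \eqref{eq:RepresentationForEps}. Since $u^\eps(x,T)\to u^*(x,T)$ by Corollary \ref{cor:ExistenceLimitHJE}, it suffices to show $\inf F_\eps\to\inf F_0$, and for this I would verify the liminf- and recovery-properties for the \emph{full} functionals $F_\eps$ along the minimizer-convergence principle recalled above. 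The only ingredient beyond the Mosco-convergence ${\cal L}_\eps\xrightarrow{M}{\cal L}_\eff$ of Theorem \ref{thm:GammaConvergence} is the compatibility of the initial-data perturbation $\x\mapsto u_0^{(\cdot)}(\x(0))$ with the convergence of trajectories.

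For the upper bound I would fix any admissible $\x$ valued in $\MS$ with $F_0(\x)<\infty$; finiteness of ${\cal L}_\eff$ forces $\x(t)\in\cE_\fast$, so the recovery sequence of Proposition \ref{prop:ConstructionRecoverySequence} is simply the constant sequence $\x^\eps\equiv\x$ (no boundary shift is needed since $\overline\Omega$ stays away from $\partial\R^I_+$). Then ${\cal L}_\eps(\x)\to{\cal L}_\eff(\x)$, while $u_0^\eps(\x(0))\to u_0^*(\x(0))$ by uniform convergence of $u_0^\eps$ at the fixed point $\x(0)$; hence $u^\eps(x,T)\le F_\eps(\x)\to F_0(\x)$, giving $u^*(x,T)\le\inf F_0$. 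Because ${\cal L}_\eff=+\infty$ off $\cE_\fast$, the infimum of $F_0$ over $\mathrm{AC}([0,T],\overline\Omega)$ coincides with that over $\mathrm{AC}([0,T],\MS)$, so the two variational representations will agree once the matching lower bound is in hand.

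The lower bound is where the main difficulty lies. Choosing near-minimizers $\x^\eps$ with $F_\eps(\x^\eps)\le u^\eps(x,T)+\eps$, the uniform bounds on $u^\eps$ and on $u_0^\eps$ give ${\cal L}_\eps(\x^\eps)\le C$, and since $\x^\eps\in\overline\Omega$ is bounded, Proposition \ref{prop:Compactness} furnishes a subsequence with $\x^\eps\to\x^*$ strongly in $L^p$ and $Q_\fast\x^\eps\to Q_\fast\x^*$ uniformly, where $\x^*\in\mathrm{AC}([0,T],\MS)$ and $\x^*(T)=\sR(Q_\fast x)=x$ (using $x\in\cE_\fast$). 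Proposition \ref{prop:liminf} then yields $\liminf_\eps{\cal L}_\eps(\x^\eps)\ge{\cal L}_\eff(\x^*)$. The delicate point is that only $Q_\fast\x^\eps(0)$ converges, not necessarily $\x^\eps(0)$ itself, so one cannot pass to the limit in $u_0^\eps(\x^\eps(0))$ by naive continuity. This is exactly where well-preparedness enters: since $u_0^\eps\in\Pi$, i.e. $\nabla u_0^\eps\in\Gamma_\fast^\perp=(\ker Q_\fast)^\perp$, each $u_0^\eps$ is constant along $\Gamma_\fast$ and factors as $u_0^\eps=\tilde u_0^\eps\circ Q_\fast$; the $C^1$ uniform limit $u_0^*$ inherits this, $u_0^*=\tilde u_0^*\circ Q_\fast$, with $\tilde u_0^\eps\to\tilde u_0^*$ uniformly on $Q_\fast(\overline\Omega)$. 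Consequently
\[
u_0^\eps(\x^\eps(0))=\tilde u_0^\eps(Q_\fast\x^\eps(0))\longrightarrow\tilde u_0^*(Q_\fast\x^*(0))=u_0^*(\sR(Q_\fast\x^*(0)))=u_0^*(\x^*(0)),
\]
so that $\liminf_\eps F_\eps(\x^\eps)\ge F_0(\x^*)\ge\inf F_0$. Combined with $F_\eps(\x^\eps)\le u^\eps(x,T)+\eps\to u^*(x,T)$ this gives $u^*(x,T)\ge\inf F_0$, completing both identities.

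Finally, since the representation pins down $u^*(x,T)$ independently of the extracted subsequence, every subsequential limit $\hat u$ of $u^\eps$ from Corollary \ref{cor:ExistenceLimitHJE} agrees with $u^*$ on $\MS$. For compact $K\subset\MS^o\subset\Omega$ I would pick a full-dimensional compact $K'$ with $K\subset K'\subset\Omega$: by the uniform Lipschitz estimate \eqref{Lip} and Arzel\'a--Ascoli, every subsequence of $(u^\eps)$ has a further subsequence converging uniformly on $K'\times[0,T]$, whose limit necessarily equals $u^*$ on $K$; hence the whole sequence converges to $u^*$ uniformly on $K\times[0,T]$. The hardest step is the lower bound, and within it the passage to the limit in the boundary term $u_0^\eps(\x^\eps(0))$ along sequences where only $Q_\fast\x^\eps$ converges — which is precisely what the factorization through $Q_\fast$ afforded by $u_0^\eps\in\Pi$ is designed to overcome.
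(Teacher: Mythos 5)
Your proposal is correct and follows the same overall architecture as the paper's proof: recast $u^\eps(x,T)=\inf F_\eps$ with $F_\eps(\x)=u_0^\eps(\x(0))+\mathcal{L}_\eps(\x)$, apply the fundamental theorem of $\Gamma$-convergence (compactness of near-minimizers via Proposition \ref{prop:Compactness}, liminf via Proposition \ref{prop:liminf}, constant recovery sequence via Proposition \ref{prop:ConstructionRecoverySequence}), identify the two variational representations using that $\mathcal{L}_\eff=+\infty$ off $\cE_\fast$, and upgrade subsequential to full uniform convergence on compacta by uniqueness of the limit. The one place where you genuinely diverge is the treatment of the boundary term $u_0^\eps(\x^\eps(0))$ in the lower bound, and your version is the more careful one: the paper asserts that the (near-)minimizers converge in $C([0,T],\overline{\Omega})$ and then invokes equicontinuity of $u_0^\eps$, but Proposition \ref{prop:Compactness} only delivers strong $\L^p$-convergence of $\x^\eps$ together with \emph{uniform} convergence of $Q_\fast\x^\eps$; pointwise convergence of $\x^\eps(0)$ itself is not guaranteed, since the a priori bound controls the distance to $\cE_\fast$ only in an integrated sense. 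Your factorization $u_0^\eps=\tilde u_0^\eps\circ Q_\fast$, extracted from the well-preparedness condition $u_0^\eps\in\Pi$ (i.e. $\nabla u_0^\eps\in\Gamma_\fast^\perp=\ker(Q_\fast)^\perp$), converts the uniform convergence of $Q_\fast\x^\eps(0)$ into convergence of the initial term, which is exactly what is needed and arguably repairs a small gap in the paper's argument. Two minor caveats: the factorization requires the fibers $(y+\Gamma_\fast)\cap\overline{\Omega}$ to be connected (automatic for convex $\Omega$, and worth a sentence in general); and your use of $\eps$-near-minimizers rather than exact minimizers is a harmless improvement, since existence of exact minimizers of $F_\eps$ is not discussed in the paper either.
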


\begin{proof}
Let us fix $(x,T)\in\MS\times(0,+\8)$. To simplify notations, we define that $X:= \{ \x\in\mathrm{AC}([0,T],\overline{\Omega}),\,\,\x(T)=x \} $
and
\[
F_{\eps}(\x):=u_{0}^\eps(\x(0))+{\cal L}_{\eps}(\x),\quad F_{0}(\x):=u_{0}^*(\x(0))+{\cal L}_{\eff}(\x).
\]
    Let $\x_{\eps}\in \mathrm{AC}([0,T],\overline{\Omega}) \subset \mathrm{AC}([0,T],\sC)$    such that $F_\eps(\x_{\eps})=\inf F_{\eps}$. Then, we
have that $F_{\eps}(\x_{\eps})<\infty$. Since, $u_{0}^\eps$ is uniformly bounded and $\x_{\eps}(0)\in\overline{\Omega}$ we conclude that ${\cal L}_{\eps}(\x_{\eps})<\infty$.
By the   compactness results in Proposition \ref{prop:Compactness}, we conclude that there is a limit $\x_{0}$ such that $\x_{\eps}\to \x_{0}$ in $C([0,T],\overline{\Omega})$, and moreover, $\x_0\in \text{\ensuremath{\mathrm{AC}}}_x([0,T],\MS)$. We want to show that $F_{0}(\x_{0})=\inf F_{0}$.

Indeed, we have
\begin{equation}\label{tm:F1}
\begin{aligned}
\inf F_{0}\leq F_{0}(\x_{0})=u_{0}^*(\x_{0}(0))+{\cal L}_{\eff}(\x_{0})\leq&\lim_{\eps\to0}u_{0}^\eps(\x_{\eps}(0))+\liminf_{\eps\to0}{\cal L}_{\eps}(\x_{\eps})\\
\leq&\liminf_{\eps\to0}F_{\eps}(\x_{\eps})=\liminf_{\eps\to0}\inf F_{\eps}=\liminf_{\eps\to0} u^\eps,
\end{aligned}
 \end{equation}
where we have used the $\Gamma$-convergence of ${\cal L}_{\eps}\to{\cal L}_{\eff}$,
the equicontinuity of $u^\eps_{0}$, and the convergence $\x_{\eps}\to \x_{0}\text{ in } C([0,T],\overline{\Omega})$.
Moreover by the construction of the recovery sequence (see \ref{prop:ConstructionRecoverySequence}), for any $\x\in X$ there is sequence $\bar{\x}_{\eps}\to \x$
in $X$ (in particular $\x_\eps(0)\to \x(0)$) such that ${\cal L}_{\eps}(\bar{\x}_{\eps})\to{\cal L}_{\eff}(\x)$.
Hence, we have that 
\begin{align*}
F_{0}(\x) & =u^*_{0}(\x(0))+{\cal L}_{\eff}(\x)\geq\lim_{\eps\to0}u^\eps_{0}(\bar{\x}_{\eps}(0))+\limsup_{\eps\to0}{\cal L}_{\eps}(\bar{\x}_{\eps})\\
 & \geq\limsup_{\eps\to0}u_{0}^\eps(\bar{\x}_{\eps}(0))+{\cal L}_{\eps}(\bar{\x}_{\eps})=\limsup_{\eps\to0}F_{\eps}(\bar{\x}_{\eps})\geq\limsup_{\eps\to0}\inf F_{\eps}=\limsup_{\eps\to0} u^\eps.
\end{align*}
Taking the infimum w.r.t. $\x\in X$ we conclude that $\inf F_{0}\geq\limsup_{\eps\to0}\inf F_{\eps}$. This. together with \eqref{tm:F1},
  hence implies that 
\[
\lim_{\eps\to0} u^\eps =\lim_{\eps\to0}\inf F_{\eps}=\inf F_{0},\quad\text{and }F_0(\x_{0})=\inf F_{0}.
\]
This means that for $x\in\mathcal{M}_\S\subset \overline{\Omega}$ we have 
$$u^{*}(x,T):=\lim_{\eps\to0}u^{\eps}(x,T) =\inf F_{0}=\inf\left\{ u^*_{0}(\x(0))+{\cal L}_{\eff}(\x): \x\in\mathrm{AC}([0,T],\MS),\,\x(T)=x\right\}.$$
{   Moreover, $u^*(x,t)=\hat{u}(x,t)$ for any interior point $x\in \MS^o$. 
  Thus the subsequence  convergence for $u^\eps$ obtained in Corollary \ref{cor:ExistenceLimitHJE} is actually a uniform convergence for all $\eps \to 0$ since the limit $\lim_{\eps\to 0} u^\eps(x,T)=u^{*}(x,T)=\inf F_{0}, x\in \MS$ is unique.
 }
\end{proof}
The above characterization provides that the limit $u^*$ can be expressed on the compact manifold $\MS$ (possibly with boundary) by the least-action representation
\begin{equation}\label{eq:RepresentationUstar}
u^*(x,t)=\inf_{ \mathrm{x}\in \mathrm{AC}_x([0,t]; \MS)} \big\{u_0^*(\mathrm{x}(0)) +  
   \int_0^t L_\eff(\mathrm{x}_s, \dot{\mathrm{x}}_s)\ud s\big\}. 
\end{equation}

\begin{rem}
    The above identification provides a formula for $u^*=\hat{u}$ on the slow manifold $ \MS$. A natural question is, how the limit looks like away from $\MS$, because the variational representation is not applicable. Corollary \ref{cor:ExistenceLimitHJE} only shows that $\hat{u}$ is continuous and Lipschitz, however a more detailed characterization is open and left for further analysis.
\end{rem}

\subsection{
Viscosity solution of effective HJE with $H_\eff$
}
Recall the compact slow manifold $\MS$ defined in \eqref{eq:MS}.
We are going to verify by the dynamic programming that $u^*$ from \eqref{eq:RepresentationUstar} satisfied the effective state-constraint HJE \eqref{HJE0} on the manifold $\MS$, i.e., 
\begin{equation*} 
\begin{aligned}
\pt_t u(x,t) + H_\eff(x,d_xu(x,t))\leq 0, \quad (x,t)\in \MS^o\times(0,+\8),\\
\pt_t u(x,t) + H_\eff(x,d_x u(x,t))\geq 0, \quad (x,t)\in \MS\times(0,+\8),\\
u(x,0) = u_0(x), \quad x\in \MS=\overline{\MS}, 
\end{aligned}
\end{equation*}
where $\MS^o$ denotes the interior of $\MS$.
Here, 
 recall that the differential $d_xu$ in differential geometry of a smooth function $u(\cdot, t):\MS\to\R$ at the point $x_0\in\MS$ is defined by 
$$d_x u(x_0,t)[v]:= \lim_{\tau \to 0}\frac{u(\x(\tau), t)-u(\x(0), t)}{\tau},$$
where $\x:\R\to\MS$ is a smooth curve with $\x(0)=x_0, \, \dot{\x}(0)=v\in T_x\MS$. (Note that the differential acts only in the first argument $x\in\MS$.)
Viscosity solutions of \eqref{HJE0} on manifolds  $\MS\times(0,+\8)$ are understood as follows, which we recall from 
\cite{capuzzo1988hamilton,fathi2012weak}:
\begin{defn}\label{def:visManifold}
    A function $u$ is a viscosity solution to \eqref{HJE0} if it is both a viscosity subsolution and   supersolution in the following sense: $u$ is a viscosity subsolution of 
  $$\pt_t u +  H_\eff(x,d_xu) \leq 0$$
  on  $\MS^o\times(0,+\8)$, if for every $\phi \in C^1(\MS\times(0,+\8))$   and every $(x_0,t_0)\in \MS^o\times(0,+\8)$ such that $u-\phi$ has a maximum at $(x_0, t_0)$, we have
  $$\pt_t \phi(x_0, t_0) + H_\eff(x_0, d_{x}\phi(x_0,t_0))\leq 0;$$
    $u$ is a viscosity supersolution of 
  $$\pt_t u +  H_\eff(x,d_xu) \geq 0$$
  on  $\overline{\MS}\times(0,+\8)$, if for every $\phi \in C^1(\overline{\MS}\times(0,+\8))$  and every $(x_0,t_0)\in \overline{\MS}\times(0,+\8)$ such that $u-\phi$ has a minimum at $(x_0, t_0)$, we have
  $$\pt_t \phi(x_0, t_0) + H_\eff(x_0, d_{x}\phi(x_0,t_0)))\geq 0.$$
\end{defn}
\begin{thm}\label{thm:effectiveHJE}
 The limiting function   $u^*$ represented as  \eqref{eq:RepresentationUstar} is a viscosity solution to the effective HJE \eqref{HJE0} on the manifold $\MS$ in the sense of   Definition \ref{def:visManifold}.
\end{thm}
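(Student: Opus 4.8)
The plan is to prove that the value function $u^*$ given by the least-action representation \eqref{eq:RepresentationUstar} is a viscosity solution of \eqref{HJE0} by the classical dynamic programming route, carried out intrinsically on the manifold $\MS$. The cornerstone is the \emph{dynamic programming principle} (DPP): for every $x\in\MS$ and $0<s<t$,
\[
u^*(x,t)=\inf\Big\{u^*(\x(s),s)+\int_s^t L_\eff(\x(\tau),\dot\x(\tau))\ud\tau:\ \x\in\mathrm{AC}([s,t],\MS),\ \x(t)=x\Big\}.
\]
This follows in the standard way from \eqref{eq:RepresentationUstar}: ``$\le$'' is obtained by restricting to curves whose initial segment is optimal for $u^*(\cdot,s)$, and ``$\ge$'' by concatenating a near-optimal segment on $[0,s]$ with any admissible segment on $[s,t]$, using that the action is additive under concatenation of absolutely continuous curves and that $L_\eff\ge0$. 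I would first record this principle together with the boundary value $u^*(x,0)=u_0^*(x)$, immediate since at $t=0$ the only admissible curve is constant; continuity $u^*(x,t)\to u_0^*(x)$ as $t\to0^+$ follows from the superlinear growth of $L_\eff$ in the tangent directions (a consequence of the lower bound $S(J|\alpha,\beta)\ge M\,\cC(J/M)-2M$ in Proposition \ref{prop:PropertiesS}), which forces optimal curves to remain close to $x$ for short times.

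For the subsolution property, let $\phi\in C^1(\MS\times(0,\infty))$ and suppose $u^*-\phi$ attains a local maximum at an interior point $(x_0,t_0)\in\MS^o\times(0,\infty)$. Fix $v\in T_{x_0}\MS$ and, using coarse-grained coordinates $\sq=Q_\fast x$ through $\x(\tau)=\sR(\sq(\tau))$, build a smooth curve on $\MS$ with $\x(t_0)=x_0$ and $\dot\x(t_0)=v$. Applying the DPP with $s=t_0-h$ along this particular (hence suboptimal) curve and invoking the local maximum inequality $u^*(y,\sigma)\le u^*(x_0,t_0)-\phi(x_0,t_0)+\phi(y,\sigma)$ near $(x_0,t_0)$ gives, after cancellation,
\[
\frac{\phi(x_0,t_0)-\phi(\x(t_0-h),t_0-h)}{h}\le\frac1h\int_{t_0-h}^{t_0}L_\eff(\x(\tau),\dot\x(\tau))\ud\tau.
\]
Letting $h\to0$ and using $\tfrac{\ud}{\ud\tau}\phi(\x,\tau)|_{t_0}=\pt_t\phi(x_0,t_0)+d_x\phi(x_0,t_0)[v]$ yields $\pt_t\phi(x_0,t_0)+d_x\phi(x_0,t_0)[v]\le L_\eff(x_0,v)$ for every $v\in T_{x_0}\MS$. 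Taking the supremum over $v$ and invoking precisely the attainment statement of Lemma \ref{lem:HeffRepresentation}, namely $H_\eff(x_0,p)=\sup_{v\in T_{x_0}\MS}\{p\cdot v-L_\eff(x_0,v)\}$ with $p$ an ambient representative of the intrinsic differential $d_x\phi(x_0,t_0)\in T_{x_0}^*\MS$, produces $\pt_t\phi(x_0,t_0)+H_\eff(x_0,d_x\phi(x_0,t_0))\le0$.

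For the supersolution property, let $u^*-\phi$ have a local minimum at $(x_0,t_0)\in\overline{\MS}\times(0,\infty)$ (the boundary being admissible since the state-constrained representation only involves curves in $\MS$). For each small $h>0$ choose, via the DPP with $s=t_0-h$, a curve $\x^h$ on $\MS$ with $\x^h(t_0)=x_0$ that is near-optimal up to error $o(h)$. The local minimum inequality and cancellation give
\[
\int_{t_0-h}^{t_0}\Big[-\pt_t\phi(\x^h,\tau)-d_x\phi(\x^h,\tau)[\dot\x^h]+L_\eff(\x^h,\dot\x^h)\Big]\ud\tau\le o(h),
\]
and the Fenchel--Young inequality $L_\eff(x,w)-p\cdot w\ge-H_\eff(x,p)$ bounds the bracket below by $-\pt_t\phi-H_\eff(\x^h,d_x\phi)$. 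Dividing by $h$ and sending $h\to0$ then yields $\pt_t\phi(x_0,t_0)+H_\eff(x_0,d_x\phi(x_0,t_0))\ge0$, provided $\x^h\to x_0$ uniformly on $[t_0-h,t_0]$ so that the continuous integrand passes to the limit. The main obstacle is exactly this last passage: one must extract from the action bound furnished by near-optimality an a priori modulus ensuring $\x^h(\tau)\to x_0$. This rests on the superlinearity of $L_\eff$ along $T_x\MS$ (again through the $\cC$-growth of $S$ in Proposition \ref{prop:PropertiesS}) to get equi-integrability of the associated fluxes and hence equicontinuity of $\x^h$; continuity and local boundedness of $H_\eff$ on $\MS$ (via analyticity of $\sR$, Proposition \ref{prop:PropertiesReconstructionAndProjection}) then legitimize the limit. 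The remaining care is purely manifold-geometric---ensuring all constructed and near-optimal curves stay on $\MS$, and that the ambient pairing $p\cdot v$ descends to the intrinsic pairing $d_x\phi[v]$ on $T_x\MS$---which is transparent once the problem is read in the coarse-grained coordinates $\sq=Q_\fast x$, where $\MS$ is resolved by $\sR$ and \eqref{HJE0} becomes the flat Hamilton--Jacobi equation associated with $\sH$ and $\sL$ on $\sQ$.
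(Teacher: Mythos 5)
Your proposal is correct and follows essentially the same route as the paper's proof: the dynamic programming principle from the representation \eqref{eq:RepresentationUstar}, the subsolution inequality obtained by testing along arbitrary curves with prescribed tangent $v\in T_{x_0}\MS$ and then invoking the attainment statement of Lemma \ref{lem:HeffRepresentation}, and the supersolution inequality obtained from near-optimal curves together with the Fenchel--Young bound $L_\eff(x,w)-p\cdot w\ge -H_\eff(x,p)$. Your additional attention to the equicontinuity of the near-optimal curves $\x^h$ (needed to pass to the limit in $\frac{1}{t_0-\tau}\int_\tau^{t_0}H_\eff(\x_s,d_x\phi(\x_s))\ud s$) is a point the paper's proof passes over silently, and is a welcome refinement rather than a departure.
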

\begin{proof}
We verify both conditions separetely.\\
\textbf{Step 1:} $u^*$ is a subsolution:\\
 Let $\phi\in C^1(\MS\times(0,+\8))$ such that $u-\phi$ attains its maximum at $(x_0, t_0)\in \MS^o\times (0,+\8)$. Then for any $ \mathrm{x}\in \mathrm{AC}_{x_0}([0,t_0]; \MS)$ with $\mathrm{x}(t_0)=x_0$, we have
    $$\phi(x_0, t_0)\leq \int_\tau^{t_0} L_\eff(\mathrm{x}_s, \dot{\mathrm{x}}_s)\ud s + u(\mathrm{x}(\tau),\tau)\leq \int_\tau^{t_0} L_\eff(\mathrm{x}_s, \dot{\mathrm{x}}_s)\ud s + \phi(\mathrm{x}(\tau),\tau).$$
  Rearranging and dividing by $t_0-\tau$, we have for any $v\in T_{x_0}\MS$,
  \begin{equation}
      \frac{\phi(x_0, t_0)-\phi(\mathrm{x}(\tau),\tau)}{t_0-\tau}\leq \frac{1}{t_0-\tau} \int_\tau^{t_0} L_\eff(\x^v_s, \dot{\x}^v_s )\ud s,
  \end{equation}
  where $\x^v_s$ is any curve on $\MS$ such that $\x^v_{s=t_0}=x_0, \dot{\x}^v_{s=t_0}=v.$
    Taking $\tau\to t_0$, we have
  \begin{equation}
      \pt_t\phi(x_0, t_0) + d_{x}\phi(x_0,t_0)[v] - L_\eff(x_0,v)\leq 0, \quad \forall v\in T_{x_0}\MS.
  \end{equation}
   Taking supremum with respect to $v\in T_{x_0}\MS$ and using Lemma \ref{lem:HeffRepresentation}, this proves 
  $$\pt_t\phi(x_0, t_0) + H_\eff(x_0, d_{x}\phi(x_0 , t_0)) \leq 0.$$
\\
\textbf{Step 2:} $u^*$ is a supersolution:\\
Let $\phi\in C^1(\overline{\MS}\times(0,+\8))$ such that $u-\phi$ attains its minimum at $(x_0, t_0)\in \overline{\MS}\times (0,+\8)$.
  Then we have
  \begin{align*}
      \phi(x_0, t_0)= \inf\{  \int_\tau^{t_0} L_\eff(\mathrm{x}_s, \dot{\mathrm{x}}_s)\ud s + u(\mathrm{x}(\tau),\tau) \}\geq \inf\{\int_\tau^{t_0} L_\eff(\mathrm{x}_s, \dot{\mathrm{x}}_s)\ud s + \phi(\mathrm{x}(\tau),\tau)\},
  \end{align*}
  where  $\mathrm{x}(\cdot)\in \mathrm{AC}_{x_0}([0,t_0]; \overline{\MS})$. 
  Thus 
  \begin{align*}
      0\leq \sup\{\phi(x_0, t_0) - \phi(\mathrm{x}(\tau),\tau) - \int_\tau^{t_0} L_\eff(\mathrm{x}_s, \dot{\mathrm{x}}_s)\ud s\}.
  \end{align*}
  Hence
  \begin{align*}
      0\leq \frac{1}{t_0-\tau}\sup\{\int_\tau^{t_0} (\pt_t \phi + d_{x} \phi[\dot{\mathrm{x}}_s])] \ud s - \int_\tau^{t_0} L_\eff(\mathrm{x}_s, \dot{\mathrm{x}}_s)\ud s\}.
  \end{align*}
  By the definition of supremum,
  for any $\eps$, there exists $\mathrm{x}$ such that 
  \begin{align*}   
  -\eps\leq \frac{1}{t_0-\tau} \int_\tau^{t_0} [ \pt_t \phi + d_x\phi(\x_s)[\dot{\mathrm{x}}_s]    -   L_\eff(\mathrm{x}_s, \dot{\mathrm{x}}_s)]\ud s \leq \frac{1}{t_0-\tau} \int_\tau^{t_0} [ \pt_t \phi + H_\eff(\mathrm{x}_s,d_x\phi(\x_s)))]\ud s.
  \end{align*}
Taking $\tau \to t_0$, we have
$$ \pt_t \phi(x_0, t_0)+ H_\eff(x_0, d_{x}\phi(x_0,t_0))\geq 0.$$
Thus, we conclude $u^*$ is a viscosity solution to \eqref{HJE0}.
\end{proof}

\subsection{Coarse-grained HJE} We have seen that the limit functionals $L_\eff$ and $H_\eff$ can also be expressed in terms of coarse-grained variables, which in turn define coarse-grained functionals $\sL$ and $\sH$, see \eqref{s:CGLagrangian} and \eqref{eq:CGHamiltonian}, respectively. We are going to show that also the limit $u^*$ of solutions of \eqref{eq:FastSlowHJE}  as a function on $\MS$ can be expressed in coarse-grained variables, which then solves the coarse-grained HJE. Recall the reconstruction map $\sR$ from \eqref{eq:UFEC}.
Using the explicit parametrization of the slow-manifold $\MS$, we define a new continuous function 
$$\su:\sQ\to\R,\quad \su(\sq):=u^*(\sR(\sq)).$$
\begin{prop}
    The function $\su=u^*\circ \sR$ is a viscosity solution of the HJE with the Hamiltonian $\sH$ in \eqref{eq:CGHamiltonian} on the (flat) manifold $\sQ$ in the sense of Definition \ref{def:vis}.
\end{prop}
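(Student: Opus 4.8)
The plan is to transfer the viscosity-solution property of $u^*$ on $\MS$ (Theorem \ref{thm:effectiveHJE}) to $\su$ on $\sQ$ through the change of variables furnished by the reconstruction map. Two ingredients make this work. First, the pair $\sR\colon\sQ\to\cE_\fast$ and the linear map $Q_{\fast}$ are mutually inverse on $\cE_\fast$: by Proposition \ref{prop:PropertiesReconstructionAndProjection} we have $Q_{\fast}\sR(\sq)=\sq$ and $\mathrm{im}(\sR)=\cE_\fast$, so $\sR(Q_{\fast}x)=x$ for $x\in\cE_\fast$, whence $u^*=\su\circ Q_{\fast}$ on $\MS=\overline{\Omega}\cap\cE_\fast$ and $Q_{\fast}\colon\MS\to\sQ$ is a homeomorphism (smooth on interiors). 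Second, the Hamiltonian identity $\sH(\sq,\sp)=H_\eff(\sR(\sq),Q_{\fast}^{T}\sp)$ from \eqref{eq:CGHamiltonian}. Given a test function $\psi$ for $\su$ at $(\sq_0,t_0)$, the recipe is to pull it back by $\phi(x,t):=\psi(Q_{\fast}x,t)$ on $\MS\times(0,+\8)$; since $Q_{\fast}$ is linear, $\phi$ inherits the $C^1$-regularity of $\psi$, and since $u^*-\phi=(\su-\psi)\circ Q_{\fast}$ on $\MS$, extrema transfer through the homeomorphism $Q_{\fast}$.

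First I would treat the subsolution inequality. Let $\psi\in C^1(\sQ\times(0,+\8))$ with $\su-\psi$ attaining a maximum at an interior point $(\sq_0,t_0)$, and set $x_0:=\sR(\sq_0)\in\MS^o$. With $\phi$ as above, $u^*-\phi$ has a maximum at $(x_0,t_0)$, and Theorem \ref{thm:effectiveHJE} gives $\pt_t\phi(x_0,t_0)+H_\eff(x_0,d_x\phi(x_0,t_0))\leq0$. By the chain rule (legitimate since $Q_{\fast}$ is linear), $\pt_t\phi(x_0,t_0)=\pt_t\psi(\sq_0,t_0)$ and, for every $v\in T_{x_0}\cE_\fast$, $d_x\phi(x_0,t_0)[v]=\nabla_\sq\psi(\sq_0,t_0)\cdot Q_{\fast}v=(Q_{\fast}^{T}\sp)\cdot v$ with $\sp:=\nabla_\sq\psi(\sq_0,t_0)$; thus $Q_{\fast}^{T}\sp\in\Gamma_{\fast}^{\perp}$ is an ambient representative of the intrinsic differential $d_x\phi$. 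Inserting this into the variational formula of Lemma \ref{lem:HeffRepresentation} yields $H_\eff(x_0,d_x\phi(x_0,t_0))=H_\eff(\sR(\sq_0),Q_{\fast}^{T}\sp)=\sH(\sq_0,\sp)$, so the inequality becomes exactly $\pt_t\psi(\sq_0,t_0)+\sH(\sq_0,\nabla_\sq\psi(\sq_0,t_0))\leq0$, the subsolution inequality for $\su$.

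The supersolution inequality is handled identically, now starting from $\psi\in C^1(\overline{\sQ}\times(0,+\8))$ for which $\su-\psi$ has a minimum at $(\sq_0,t_0)$ with $\sq_0\in\overline{\sQ}$; the pullback $\phi=\psi\circ Q_{\fast}$ is $C^1$ on $\overline{\MS}$, $u^*-\phi$ has a minimum at $(x_0,t_0)=(\sR(\sq_0),t_0)$, and the supersolution part of Theorem \ref{thm:effectiveHJE} (valid on $\overline{\MS}=\MS$) together with the same computation gives $\pt_t\psi(\sq_0,t_0)+\sH(\sq_0,\nabla_\sq\psi(\sq_0,t_0))\geq0$. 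I expect the main obstacle to be bookkeeping rather than conceptual: one must check that the intrinsic differential $d_x\phi$ on $\MS$ is correctly represented, within $\Gamma_{\fast}^{\perp}$, by $Q_{\fast}^{T}\sp$ so that the identity $H_\eff(\sR(\sq),Q_{\fast}^{T}\sp)=\sH(\sq,\sp)$ applies — here Lemma \ref{lem:HeffRepresentation} is essential, since its characterization of $H_\eff$ as a supremum over $v\in T_x\cE_\fast$ is insensitive to the choice of ambient representative agreeing with $d_x\phi$ on $T_x\cE_\fast$ — and that $Q_{\fast}\colon\MS\to\sQ$ is a genuine homeomorphism respecting interior and boundary, so that the transfer of maxima and minima and the distinction between the sub- and supersolution domains in Definition \ref{def:vis} is legitimate. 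Both facts follow from Proposition \ref{prop:PropertiesReconstructionAndProjection} (continuity of $\sR$, $Q_{\fast}\sR=\mathrm{id}$, $Q_{\fast}\D\sR=I_{m_{\fast}}$, and the tangent-space identity $T_x\cE_\fast=\mathbb{H}(x)^{-1}\Gamma_{\fast}^{\perp}$), which apply at every point of $\MS$ because $\overline{\Omega}\subset\R_+^I$ forces $\MS\subset\cE_{\fast,+}$.
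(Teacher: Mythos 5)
Your proposal is correct and follows essentially the same route as the paper: both rest on the inverse relationship $Q_{\fast}\sR=\mathrm{id}$, $\sR\circ Q_{\fast}=\mathrm{id}$ on $\cE_\fast$ from Proposition \ref{prop:PropertiesReconstructionAndProjection}, the chain-rule identity $d_{\sR(\sq)}u^*=Q_\fast^T d_\sq\su$ on the tangent space, and the Hamiltonian identity $H_\eff(\sR(\sq),Q_\fast^T\sp)=\sH(\sq,\sp)$. The paper only records the derivative transformation and leaves the viscosity test-function bookkeeping implicit, which you carry out explicitly; the substance is the same.
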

\begin{proof}
    We only show how the derivatives of $u^*$ translate by the chain-rule. We have $\partial_t\su(\sq,t)=\partial_t u^*(\sR(\sq),t)$. Moreover,  we know that $H_\eff(\sR(\sq),p)=\sH(\sq,\sp)$ if $Q_\fast^T\sp=p$.
    Hence, it suffices to show that $d_{\sR(\sq)}u^* = Q^T_\fast d_\sq\su$, or, equivalently, $d_{\sR(\sq)}u^*[v] = Q^T_\fast d_\sq\su[v]$ for all $v\in T_{\sR(\sq)}\MS$. Using the projection onto the tangent space $T_x\MS$ and its explicit characterization, we have  $v=d_\sq\sR\circ Q_\fast[v]$ (see Proposition \ref{prop:PropertiesReconstructionAndProjection}). Hence, by the chain rule
    $$
    d_{\sR(\sq)}u^*[v] = d_{\sR(\sq)}
    u^*
    \circ d_\sq\sR\circ Q_\fast[v]=d_\sq\su\circ Q_\fast[v] \quad \Rightarrow \quad d_{\sR(\sq)}u^* = Q^T_\fast d_\sq\su.
    $$
\end{proof}
We finally remark that again $\su$ has an integral representation. To see this, we define:
\begin{equation*}
\widetilde{\su}(\sq,t)=\inf_{ \mathrm{y}\in \mathrm{AC}([0,t]; \sQ),\, \mathrm{y}(t)=\sq} \big(u_0(\sR(\mathrm{y}(0))) +  
   \int_0^t \sL(\mathrm{y}_s, \dot{\mathrm{y}}_s)\ud s\big). 
\end{equation*}
Using the same procedure as in Theorem \ref{thm:effectiveHJE}, we can prove that $\widetilde{\su}(\sq,t)$ is a viscosity solution of HJE with the Hamiltonian $\sH$.
By uniqueness of viscosity solution, we obtain the representation formula for $\su(\sq,t)=\widetilde{\su}(\sq,t)$.

\section*{Acknowledgment}
Yuan Gao was partially supported by NSF under award DMS-2204288 and CAREER award DMS-2440651. We would also like to thank the anonymous referees for their constructive comments.

\bibliographystyle{alpha}
\bibliography{HJEbib}

\newcommand{\etalchar}[1]{$^{#1}$}
\begin{thebibliography}{KMK73}

\bibitem[ABM07]{alvarez2007multiscale}
Olivier Alvarez, Martino Bardi, and Claudio Marchi.
\newblock Multiscale problems and homogenization for second-order
  hamilton--jacobi equations.
\newblock {\em Journal of Differential Equations}, 243(2):349--387, 2007.

\bibitem[AK15]{Kurtz15}
David~F. Anderson and Thomas~G. Kurtz.
\newblock {\em Stochastic Analysis of Biochemical Systems}.
\newblock Springer International Publishing, 2015.

\bibitem[BCC10]{buice2010systematic}
Michael~A Buice, Jack~D Cowan, and Carson~C Chow.
\newblock Systematic fluctuation expansion for neural network activity
  equations.
\newblock {\em Neural computation}, 22(2):377--426, 2010.

\bibitem[BD{\etalchar{+}}97]{bardi1997optimal}
Martino Bardi, Italo~Capuzzo Dolcetta, et~al.
\newblock {\em Optimal control and viscosity solutions of
  Hamilton-Jacobi-Bellman equations}, volume~12.
\newblock Springer, 1997.

\bibitem[BN13]{bressloff2013metastability}
Paul~C Bressloff and Jay~M Newby.
\newblock Metastability in a stochastic neural network modeled as a velocity
  jump markov process.
\newblock {\em SIAM Journal on Applied Dynamical Systems}, 12(3):1394--1435,
  2013.

\bibitem[Bot03]{bothe2003reaction}
Dieter Bothe.
\newblock Instantaneous limits of reversible chemical reactions in presence of
  macroscopic convection.
\newblock {\em Journal of Differential Equations}, 193(1):27--48, 2003.

\bibitem[CD88]{capuzzo1988hamilton}
Italo Capuzzo-Dolcetta.
\newblock Hamilton-{J}acobi equations with constraints.
\newblock In {\em Stochastic Differential Systems, Stochastic Control Theory
  and Applications}, pages 99--106. Springer, 1988.

\bibitem[Fat12]{fathi2012weak}
Albert Fathi.
\newblock Weak kam from a pde point of view: viscosity solutions of the
  {H}amilton--jacobi equation and {A}ubry set.
\newblock {\em Proceedings of the Royal Society of Edinburgh Section A:
  Mathematics}, 142(6):1193--1236, 2012.

\bibitem[FL07]{fonleo07}
Irene Fonseca and Giovanni Leoni.
\newblock Modern methods in the calculus of variations: $l^p$ spaces, 2007.
\newblock cvgmt preprint.

\bibitem[Gan87]{Hu87}
Hu~Gang.
\newblock Stationary solution of master equations in the large-system-size
  limit.
\newblock {\em Physical Review A}, 36(12):5782–5790, Dec 1987.

\bibitem[GL22]{GL22}
Yuan Gao and Jian-Guo Liu.
\newblock Revisit of macroscopic dynamics for some non-equilibrium chemical
  reactions from a {H}amiltonian viewpoint.
\newblock {\em Journal of Statistical Physics}, 189(2):1--57, 2022.

\bibitem[GL23]{GL23}
Yuan Gao and Jian-Guo Liu.
\newblock Large deviation principle and thermodynamic limit of chemical master
  equation via nonlinear semigroup.
\newblock {\em Multiscale Modeling \& Simulation}, 21(4):1534--1569, 2023.

\bibitem[GQ17]{QianGe17}
Hao Ge and Hong Qian.
\newblock Mathematical formalism of nonequilibrium thermodynamics for nonlinear
  chemical reaction systems with general rate law.
\newblock {\em Journal of Statistical Physics}, 166(1):190--209, 2017.

\bibitem[GQX15]{PhysRevLett}
Hao Ge, Hong Qian, and X.~Sunney Xie.
\newblock Stochastic phenotype transition of a single cell in an intermediate
  region of gene state switching.
\newblock {\em Phys. Rev. Lett.}, 114:078101, Feb 2015.

\bibitem[KK13]{kang2013separation}
Hye-Won Kang and Thomas~G Kurtz.
\newblock Separation of time-scales and model reduction for stochastic reaction
  networks.
\newblock {\em The Annals of Applied Probability}, 23(2):529--583, 2013.

\bibitem[KMK73]{Kubo73}
Ryogo Kubo, Kazuhiro Matsuo, and Kazuo Kitahara.
\newblock Fluctuation and relaxation of macrovariables.
\newblock {\em Journal of Statistical Physics}, 9(1):51–96, Sep 1973.

\bibitem[Kur80]{kurtz1980representations}
Thomas~G Kurtz.
\newblock Representations of {M}arkov processes as multiparameter time changes.
\newblock {\em The Annals of Probability}, pages 682--715, 1980.

\bibitem[LL17]{li2017large}
Tiejun Li and Feng Lin.
\newblock Large deviations for two-scale chemical kinetic processes.
\newblock {\em Communications in Mathematical Sciences}, 15(1):123--163, 2017.

\bibitem[LX11]{li2011central}
Gene-Wei Li and X~Sunney Xie.
\newblock Central dogma at the single-molecule level in living cells.
\newblock {\em Nature}, 475(7356):308--315, 2011.

\bibitem[Mit08]{mitake2008asymptotic}
Hiroyoshi Mitake.
\newblock Asymptotic solutions of {H}amilton-{J}acobi equations with state
  constraints.
\newblock {\em Applied Mathematics and Optimization}, 58:393--410, 2008.

\bibitem[MPS21]{MielkePeletierStephan}
Alexander Mielke, Mark~A Peletier, and Artur Stephan.
\newblock {EDP}-convergence for nonlinear fast–slow reaction systems with
  detailed balance.
\newblock {\em Nonlinearity}, 34(8):5762, jul 2021.

\bibitem[MS20]{MielkeStephan}
Alexander Mielke and Artur Stephan.
\newblock Coarse-graining via {EDP}-convergence for linear fast-slow reaction
  systems.
\newblock {\em Mathematical Models and Methods in Applied Sciences},
  30(09):1765--1807, 2020.

\bibitem[PR23]{PeletierRenger}
Mark~A. Peletier and D.~R.~Michiel Renger.
\newblock Fast reaction limits via {$\Gamma$}-convergence of the flux rate
  functional.
\newblock {\em Journal of Dynamics and Differential Equations}, 35, 2023.

\bibitem[QG21]{QianBook}
Hong Qian and Hao Ge.
\newblock {\em Stochastic Chemical Reaction Systems in Biology}.
\newblock Lecture Notes on Mathematical Modelling in the Life Sciences.
  Springer International Publishing, 2021.

\bibitem[Ste21]{Stephan21}
Artur Stephan.
\newblock {EDP}-convergence for a linear reaction-diffusion system with fast
  reversible reaction.
\newblock {\em Calculus of Variations and Partial Differential Equations},
  60(6):226, 2021.

\bibitem[Tra21]{Tran21}
Hung~Vinh Tran.
\newblock {\em Hamilton-Jacobi equations: theory and applications}, volume 213.
\newblock American Mathematical Soc., 2021.

\end{thebibliography}

\end{document}